\documentclass[12pt]{amsart}
\usepackage{geometry}

\usepackage[mathscr]{euscript}
\usepackage[all]{xy}
\usepackage[T1]{fontenc}
\usepackage{amsfonts}

\usepackage{subcaption}
\usepackage{amssymb}
\usepackage{mathrsfs}
\usepackage[dvipsnames,usenames]{xcolor}
\usepackage[colorlinks=true,urlcolor=blue,linkcolor=blue,citecolor=blue]{hyperref}
\usepackage{upgreek}
\hypersetup{
	linkbordercolor={1 0 0}, 
	citebordercolor={0 1 0} 
}
\usepackage{cite}
\usepackage[noabbrev]{cleveref}

\usepackage{tikz}

\textwidth 6.0in    
\textheight 8.5in
\oddsidemargin.25in    
\evensidemargin.25in     
\footskip 0.5in
\headsep= 0.35in
\topmargin -0.05in


\newcommand{\al}{\alpha}
\newcommand{\be}{\beta}
\newcommand{\ga}{\gamma}

\newcommand{\ep}{\varepsilon}

\newcommand{\la}{\lambda}
\newcommand{\si}{\sigma}

\newcommand{\Si}{\Sigma}
\newcommand{\ZZ}{{\mathbb Z}}

\newcommand{\QQ}{{\mathbb Q}}
\newcommand{\RR}{{\mathbb R}}

\newcommand{\RP}{{\mathbb R}{\rm P}}

\newcommand{\fN}{{\mathfrak N}}


\newcommand{\cG}{\mathcal G}
\newcommand{\cK}{\mathcal K}

\newcommand{\tr}{\mathsf{T}}

\newcommand{\lto}{\longrightarrow}
\newcommand{\lk}{\operatorname{\ell{\it k}}}
\newcommand{\sig}{\operatorname{sig}}
\newcommand{\sm}{\smallsetminus}
\newcommand{\co}{\colon}

\newcommand{\wt}{\widetilde}
 \newcommand{\Arf}{\operatorname{Arf}}
\newcommand{\Br}{{\upbeta}}
\newcommand{\Ker}{\operatorname{Ker}}

\newcommand*\wbar[1]{
  \hbox{ \kern-0.2em%
    \vbox{%
      \hrule height 0.5pt  
      \kern0.25ex
      \hbox{%
        \kern-0.10em
        \ensuremath{#1}%
        \kern-0.05em
      }%
    }%
  \kern0.05em}%
}
\makeatletter
\newcommand*\bigcdot{\mathpalette\bigcdot@{.55}}
\newcommand*\bigcdot@[2]{\mathbin{\vcenter{\hbox{\scalebox{#2}{$\m@th#1\bullet$}}}}}
\makeatother

\newtheorem{theorem}{Theorem} [section]
\newtheorem{lemma}[theorem]{Lemma}
\newtheorem{proposition}[theorem]{Proposition}

\theoremstyle{definition}     
\newtheorem{definition}[theorem]{Definition}

\theoremstyle{remark}
\newtheorem{remark}[theorem]{Remark}
\newtheorem{example}[theorem]{Example}

\title[Concordance invariants of null-homologous knots]{Concordance invariants of null-homologous \\ knots in thickened surfaces}
\author[H. U. Boden]{Hans U. Boden}
\address{Mathematics \& Statistics, McMaster University, Hamilton, Ontario}
\email{boden@mcmaster.ca}

\author[H. Karimi]{Homayun Karimi}
\address{Mathematics \& Statistics, McMaster University, Hamilton, Ontario}
\email{karimih@math.mcmaster.ca}

\subjclass[2020]{57K10 (primary), 57K12 (secondary)}
\keywords{knot, concordance, slice knot, link, spanning surface, Gordon-Litherland pairing, Goeritz matrix, signature, determinant, nullity, Brown invariant, Arf invariant.}


\pagestyle{headings}

\begin{document}

\begin{abstract}
Using the Gordon-Litherland pairing, one can define invariants (signature, nullity, determinant) for $\ZZ/2$ null-homologous links in thickened surfaces. In this paper, we study the concordance properties of these invariants.  For example, if $K \subset \Si \times I$ is $\ZZ/2$ null-homologous and slice, we show that its signatures vanish and its determinants are perfect squares. 
These statements are derived from a cobordism result for closed unoriented surfaces in certain 4-manifolds. 

The Brown invariants are defined for $\ZZ/2$ null-homologous links in thickened surfaces. They take values in $\ZZ/8 \cup \{\infty\}$ and depend on a choice of spanning surface. 
We present two equivalent methods to defining and computing them, and we prove a chromatic duality result relating the two.  We study their concordance properties, and we show how to interpret them as Arf invariants for null-homologous links. 
The Brown invariants and knot signatures are shown to be invariant under concordance  of spanning surfaces.
\end{abstract}

\maketitle
\section*{Introduction}
In the previous paper \cite{Boden-Chrisman-Karimi-2021}, we introduced  invariants (signature, determinant, nullity) for  $\ZZ/2$ null-homologous links in thickened surfaces. The invariants are defined in terms of the Gordon-Litherland pairing, and they depend on a choice of spanning surface up to $S^*$-equivalence. 

In the present paper, we use the invariants to define slice obstructions for knots in thickened surfaces. Specifically, given a $\ZZ/2$ null-homologous knot $K$ in a thickened surface, we show that if $K$ is slice, then its knot signatures vanish and its knot determinants are perfect squares. 
(See Theorems \ref{slice} and \ref{determinant}.) We also show that the knot signatures are invariant under a new notion of concordance, namely concordance of spanning surfaces (see \Cref{defn-spanning-surf}).

These results are deduced from our main result, \Cref{boundary}.
Suppose $W$ is a compact, oriented 3-manifold with $\partial W = \Si$, and $E \subset W \times I$ is a closed unoriented surface with $[E]=0$ in $H_2(W\times I;\ZZ/2)$ and trivial Euler class. Then  \Cref{boundary} asserts that there exists a 3-manifold $V$ embedded in $W \times I$ with $\partial V=E.$  It is used to prove Theorems \ref{slice} and \ref{determinant}, and it is also used to prove a vanishing result for the Brown invariants for slice knots (\Cref{cobordism}).

The Brown invariants are defined for $\ZZ/2$ null-homologous links in thickened surfaces and take values in $\ZZ/8 \cup \{\infty \}$. We provide two definitions, one in terms of $\ZZ/4$ quadratic enhancements of the Gordon-Litherland pairing, and another in terms of Goeritz matrices associated to checkerboard colorings of link diagrams. We relate the two approaches by proving a chromatic duality result (\Cref{thm:chromatic}). 

The first approach shows that the Brown invariants depend on a choice of spanning surface up to $S^*$-equivalence. (A proof that they are invariant under $S^*$-equivalence can be found in the recent paper of Klug \cite{Klug-2020}.) Every non-split, $\ZZ/2$ null-homologous link $L \subset \Si \times I$ has two distinct $S^*$-equivalence classes of spanning surfaces, thus each such link admits two distinct Brown invariants. In this way, the Brown invariants are similar to the other invariants (signature, determinant, nullity) defined in terms of the Gordon-Litherland pairing. 

The second approach shows that the Brown invariants can always be computed in terms of Goeritz matrices. This leads to a fast and efficient algorithm for computing them. We also study the behavior of the invariants under horizontal and vertical mirror symmetry, and we relate them to Arf invariants in the special case when $L\subset \Si \times I$ is a null-homologous link.

The Goeritz matrices were first introduced in \cite{Goeritz}, which gave the first applications of quadratic forms to knot theory, see \cite{Przytycki-2011} and \cite{Traldi-2017}. Goeritz matrices continue to inspire new and important results, such as \cite{Boninger} and \cite{Traldi}. In \cite{Boninger}, Boninger shows that the Jones polynomial of a link can be computed from its Goeritz matrix. In \cite{Traldi}, Traldi shows that a link is determined up to mutation by the Goeritz matrices of its diagrams.

Our results touch upon another invariant with a long and distinguished history, namely the Arf invariant \cite{Arf-I}. As a knot invariant, it was first studied by Robertello \cite{Robertello-1965}, who showed it to be invariant under concordance. It is also invariant under the band pass move \cite{Kauffman-1987}, and it is equal to the mod 2 reduction of Casson's knot invariant \cite{Polyak-Viro}. It is also equal to the value $V_K(t)\vert_{t=i}$ of the Jones polynomial  \cite{Lickorish}. In particular, the Arf invariant is an invariant of finite-type, in fact, it is the only  finite-type knot invariant which is also invariant under concordance \cite{Ng}.


One motivation for studying concordance of knots in thickened surfaces comes from applications to concordance of virtual knots. For example, the Brown invariants extend to invariants of checkerboard colorable virtual knots and links. In fact, using parity projection, they can be extended to invariants for all virtual knots.
As well, since parity projection preserves concordance (Theorem 5.9 \cite{Boden-Chrisman-Gaudreau-2020}), it follows that the extended Brown invariants are slice obstructions for arbitrary virtual knots.

For a null-homologous knot, the Brown invariant specializes to its Arf invariant, so it is natural to wonder whether the Brown invariants are finite-type invariants. Specifically, can they be computed on subdiagrams? For classical knots, the Casson invariant is an integral lift of the Arf invariant, and so it is natural to wonder whether the Brown invariants also have integral lifts.

For a classical link, it is well-known that the Arf invariant is only defined when the link is \textit{proper}. (Recall that a classical link  $L= K_1 \cup \cdots \cup K_n$ is proper if $\lk(K_i,L\sm K_i)$ is even for all $i=1,\ldots, n.$) This is precisely the condition needed to ensure that the quadratic form associated to a Seifert surface for $L$ is proper, see \cite[p.226]{Kirby-Melvin}. It would be interesting to determine conditions on a link with spanning surface $F \subset \Si \times I$ so that its $\ZZ/4$ quadratic form $\varphi_F$ is \textit{proper} (see \Cref{section-Brown}).


In \Cref{sec1}, we review concordance for links $L \subset \Si \times I$ and discuss the Gordon-Litherland pairing and the resulting link invariants.
In  \Cref{sec2}, we state and prove the main result, \Cref{boundary}. We also provide a discussion focused on the hypotheses of \Cref{boundary}. 
In \Cref{sec3}, we establish the slice criteria on the signature and determinant derived from \Cref{boundary}.
In  \Cref{sec4}, we review $\ZZ/4$ enhanced forms and their Brown invariants. We then discuss the associated link invariants, showing they provide slice obstructions and proving a chromatic duality result. In  \Cref{sec5}, we introduce the notion of  concordance of spanning surfaces, and we study the behavior of the signature and Brown invariants under concordance of spanning surfaces.

\medskip\noindent
{\textbf{Conventions.}}
Throughout this paper, spanning surfaces are assumed to be compact and connected, but they are not assumed to be oriented or even orientable.
\section{Preliminaries} \label{sec1}
We begin this section by introducing some basic notions. We then review Turaev's definition of concordance for links in thickened surfaces, and recall the construction of the Gordon-Litherland pairing and describe link invariants such as the signature, nullity, determinant which are defined in terms of the Gordon-Litherland pairing.

\subsection{Basic notions}
Throughout this paper, $\Si$ will denote a compact, connected, oriented surface and $I = [0,1]$, the unit interval.  A link in $\Si \times I$ is an embedding of $S^1 \sqcup \cdots \sqcup S^1$ into the interior of $\Si \times I$, considered up to orientation-preserving homeomorphisms of the pair $(\Si \times I, \Si \times \{0\})$. The link $L \subset \Si \times I$ is said to be $R$ \textit{null-homologous} if $[L]=0$ in $H_1(\Si\times I; R)$. When $R=\ZZ$ is understood by context, we use null-homologous to mean  $\ZZ$ null-homologous without any confusion.

Given a link $L \subset \Si \times I$, a \textit{spanning surface} for $L$ is a compact, connected, unoriented surface $F$ with boundary $\partial F =L$. If $F$ is oriented, then it is called a \textit{Seifert surface} for $L$. In that case, $L$ inherits an orientation as the oriented boundary of $F$.
 
Note that a link $L \subset \Si \times I$ is $\ZZ/2$ null-homologous if and only if it admits a spanning surface, and it is null-homologous if and only if it admits a Seifert surface.

Any spanning surface $F\subset \Si \times I$ for $L$ determines  a symmetric bilinear pairing on $H_1(F;\ZZ)$ called the \textit{Gordon-Litherland} pairing.  Using this pairing, one can define signature, determinant and nullity invariants  for $\ZZ/2$ null-homologous links $L \subset \Si \times I$, \cite{Boden-Chrisman-Karimi-2021}. When $L$ is checkerboard colorable, the invariants of \cite{Boden-Chrisman-Karimi-2021} agree with the invariants defined by Im, Lee, and Lee in terms of Goeritz matrices \cite{Im-Lee-Lee-2010}.

The invariants (signature, determinant, nullity) derived from the Gordon-Lither\-land pairing depend on the choice of spanning surface, but they are invariant under $S^*$-equivalence (see \Cref{equivalence} and \cite[\S 2.3]{Boden-Chrisman-Karimi-2021}). 

When $\Si$ has genus $g(\Si) \geq 1$, every non-split $\ZZ/2$ null-homologous link $L \subset \Si \times I$ has exactly two $S^*$-equivalence classes of spanning surfaces, see \cite[Proposition 1.6]{Boden-Chrisman-Karimi-2021}. Thus, links in thickened surfaces typically have two signatures, two determinants, and two nullities.

\subsection{Concordance and slice knots}
We recall the notions of cobordism and concordance for knots in thickened surfaces,
originally introduced by Turaev \cite{Turaev-2008-a}.

\begin{definition}[Turaev] \label{defn:conc}
Two knots $K_0 \subset \Si_0 \times I$ and $K_1 \subset \Si_1\times I$ are said to be \textit{concordant} if there exists a compact, oriented 3-manifold $W$ with $\partial W =-\Si_0 \cup \Si_1$ and an annulus $A$  properly embedded in $W \times I$ such that $\partial A = K_0 \sqcup K_1$. 

More generally, a \textit{cobordism} is an oriented surface $Z$ properly embedded in $W \times I$ such that $\partial Z = K_0 \sqcup K_1$. 
\end{definition}

Note that there exists a cobordism between any two knots in thickened surfaces. For a proof, see \cite{Kauffman-2015}. The \textit{slice genus} of a knot $K \subset \Si \times I$ is defined by setting $$g_s(K) = \min \{g(Z) \mid Z \text{ is a cobordism from $K$ to the unknot} \},$$ where $g(Z)$ denotes the genus of cobordism surface $Z$. A knot $K \subset \Si \times I$ is said to be \textit{slice} if it is concordant to the unknot. In particular, a knot $K \subset \Si \times I$ is slice if and only if $g_s(K)=0$.

\subsection{Gordon-Litherland pairing}
Associated to a link $L$ in $\Si \times I$ with spanning surface $F$, there is a symmetric bilinear pairing on $H_1(F;\ZZ)$ called the \textit{Gordon-Litherland pairing}. In this section, we review the definition of the Gordon-Litherland pairing, following \cite{Boden-Chrisman-Karimi-2021} and \cite{Boden-Karimi-2020}.

To begin, we recall the asymmetric linking for simple closed curves in $\Si \times I$. Given two disjoint oriented simple closed curves $J, K$  in the interior of $\Si \times I$, define $\lk(J, K) = J \cdot B,$ where $B$ is a 2-chain in $\Si \times I$ such that $\partial B =  K - v$ for some  1-cycle $v$ in $\Si \times \{1\}$ and $\cdot$ denotes the intersection number.

Let $F \subset \Si \times I$ be a compact, connected, unoriented surface. Its normal bundle $N(F)$ has boundary a $\{\pm 1\}$-bundle $\widetilde{F}\stackrel{\pi}{\lto}F$, a double cover with $\widetilde{F}$ oriented. Define the transfer map $\tau \co H_1(F;\ZZ) \to H_1(\widetilde{F};\ZZ)$ by setting $\tau([a]) = [\pi^{-1}(a)].$

For $a,b\in H_1(F;\ZZ)$, define  $\cG_F(a,b)=\tfrac{1}{2}\big(\lk(\tau a,b)+\lk(\tau b,a)\big).$ This pairing is well-defined, takes values in the integers, and is symmetric (for proofs of these and other statements, see \cite{Boden-Chrisman-Karimi-2021} and \cite{Boden-Karimi-2020}). The map $\cG_F \co H_1(F;\ZZ) \times H_1(F;\ZZ) \to \ZZ$ is called the \textit{Gordon-Litherland pairing}.

Let $L \subset \Si \times I$ be a $\ZZ/2$ null-homologous link and $F \subset \Si \times I$ a spanning surface for $L$.
Assuming that $L$ has $m$ components, we can write $L=K_1 \cup \dots \cup K_m$. Let $L' =K_1' \cup \dots \cup K_m'$ be the push-off of $L$ in $\Si \times I$ that misses $F$. Fix an orientation on $L$, which gives an orientation for each component $K_i$, and choose the compatible orientation on $K_i'$. Then define 
\begin{equation*}
e(F) = -\sum_{i=1}^m \lk(K_i, K_i') \quad \text{and} \quad
e(F,L) =  -\sum_{i,j=1}^m \lk(K_i, K_j').
\end{equation*}
Here, $e(F,L)$ will depend on the choice of orientation of $L$, but $e(F)$ is independent of that choice. The two are related by the formula $e(F,L) = e(F) - \la(L)$, where $\la(L) = \sum_{i \neq j} \lk(K_i, K_j)$ is the total linking number of $L$. In the case of knots, $e(F,K)=e(F).$

In \cite{Boden-Chrisman-Karimi-2021}, the Gordon-Litherland pairing is used to define signature, determinant, and nullity invariants for  links in thickened surfaces. 
Let $L \subset \Si \times I$ be a link and $F \subset \Si \times I$ a spanning surface. Then the determinant is  $\det(L,F)=|\det(\cG_{F})|$, the nullity is $n(L,F)=\text{nullity}(\cG_{F})$, and the signature is $\si(L,F)=\sig(\cG_{F})+\frac{1}{2} e(F,L)$. Each of $\det(L,F), n(L,F)$ and $\si(L,F)$  depend only on the $S^*$-equivalence of the spanning surface $F$. 

\begin{definition} \label{equivalence}
Two spanning surfaces are \textit{$S^*$-equivalent} if one can be obtained from the other by (i) ambient isotopy, (ii) attachment (or removal) of a tube, and (iii) attachment (or removal) of a half-twisted band.
\end{definition}

A link in $\Si \times I$ is said to be \textit{split} if it can be represented by a disconnected diagram on $\Si$. A link is said to be \textit{checkerboard colorable} if it can be represented by a checkerboard colorable diagram on $\Si$ (see \Cref{defn:123}).

As previously mentioned, for every link $L \subset \Si \times I$ that is non-split and $\ZZ/2$ null-homologous, there are two $S^*$-equivalence classes of spanning surfaces. In fact, a link $L \subset \Sigma \times I$  is  $\ZZ/2$ null-homologous if and only if it is checkerboard colorable (see \cite[Proposition 1.1]{Boden-Chrisman-Karimi-2021}), and every spanning surface is $S^*$-equivalent to one of checkerboard surfaces (see \cite[Proposition 1.6]{Boden-Chrisman-Karimi-2021}). Thus, for such links, there are two sets of invariants arising from the Gordon-Litherland pairing.

\section{Main theorem} \label{sec2}
In this section, we state and prove the main theorem, which gives a cobordism result for closed unoriented surfaces. The proof involves obstruction theory, and we are grateful to Danny Ruberman, who sent us a sketch of a key step in the following proof. Following the proof, we discuss applications with an eye toward ensuring that the hypotheses of \Cref{boundary} are satisfied.

Applications of \Cref{boundary} to questions about concordance of knots will be given in \Cref{sec3}. This is where we show that the signature and determinant provide obstructions to knots being slice. 

For additional background information about nonorientable surfaces in 3- and 4-manifolds, we refer readers to \cite{Bredon-Wood} and \cite{Levine-Ruberman-Strle}.



\begin{theorem}\label{boundary}
Let $W$ be a compact oriented 3-manifold with $\partial W=\Si$. Assume that $E\subset W\times I$ is a closed nonorientable surface with $[E]=0$ in $H_2(W\times I; \ZZ/2)$ and with normal Euler number $e(E)=0$. Then there exists a compact 3-manifold $V\subset W\times I$ with $\partial V=E$.
\end{theorem}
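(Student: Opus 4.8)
The strategy is to realize $V$ as (part of) the zero set of a suitable map into a circle, exploiting that an embedded closed surface in a $4$-manifold bounds a $3$-submanifold precisely when it represents the trivial class in $\ZZ/2$-homology, *provided* one can lift this homological statement to an actual map to $S^1$ whose regular level set is the desired $V$. Concretely, let $M = W \times I$ and note $\partial M = (W \times \{0\}) \cup (\Si \times I) \cup (W \times \{1\})$, with $E$ lying in the interior of $M$. First I would observe that since $[E] = 0$ in $H_2(M;\ZZ/2)$, Poincar\'e--Lefschetz duality gives that the $\ZZ/2$ class dual to $E$ vanishes, so $E$ separates in the appropriate sense; the goal is to upgrade "separates over $\ZZ/2$" to "bounds an embedded $3$-manifold."

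The key step, and the one I expect to be the main obstacle, is an obstruction-theoretic argument (this is presumably the step Ruberman contributed). Consider the normal circle bundle of $E$ in $M$; because the normal Euler number $e(E) = 0$ and $E$ is nonorientable, the normal bundle is the orientation line bundle of $E$ tensored appropriately, and its unit bundle $\partial N(E)$ is the orientation double cover $\wt E$ — crucially, $\wt E$ is connected (as $E$ is nonorientable) and the bundle $\wt E \to E$ is the nontrivial one. On $M \sm \operatorname{int} N(E)$ one wants to construct a map to $S^1$ that restricts on $\partial N(E) = \wt E$ to the covering projection followed by... no: rather, one builds a map $f \co M \sm \operatorname{int}N(E) \to S^1$ whose restriction to the boundary torus-bundle-neighborhood is the standard "angular coordinate" collapsing $\wt E$ correctly, so that $f^{-1}(\text{pt})$, capped off inside $N(E)$, is a $3$-manifold $V$ with $\partial V = E$. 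The existence of such an $f$ is an obstruction problem: the primary obstruction lives in $H^1(M \sm \operatorname{int}N(E); \ZZ)$ and the prescribed boundary behavior, together with $[E]=0 \in H_2(M;\ZZ/2)$ and $e(E)=0$, is exactly what guarantees the obstruction class extends over the relevant skeleton. One must check that $H^2$ of the pair vanishes or that the secondary obstruction is killed, using that $K(\ZZ,1) = S^1$ has no higher homotopy, so in fact only the $H^1$ obstruction is relevant and the map extends once the boundary class is the restriction of a global class — which is where the hypotheses enter.

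Having produced $f \co M \sm \operatorname{int}N(E) \to S^1$ with the correct boundary model, I would take a regular value $y \in S^1$ (transversality), set $V_0 = f^{-1}(y)$, a properly embedded $3$-manifold in $M \sm \operatorname{int}N(E)$ with $\partial V_0 \subset \partial N(E) = \wt E$ equal to a section-like curve-bundle over $E$; by the boundary model this $\partial V_0$ is identified with the subbundle of $\partial N(E)$ sitting over $E$ in a way that caps off: glue in the corresponding $D^1$-bundle over $E$ inside $N(E)$ (the "meridional disk bundle" restricted suitably) to obtain $V = V_0 \cup_{\wt E} (\text{bundle over } E) \subset M$ with $\partial V = E$. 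Finally I would verify $V$ is compact (it is closed in the compact $M$ and has boundary exactly $E$) and lies in the interior away from $\partial M$ after a small isotopy, completing the proof. A subtlety to address is orientability/coherence of the gluing — since $E$ is nonorientable, $V$ itself need not be orientable, but the theorem does not require it; one only needs the gluing along $\wt E$ to be a genuine manifold gluing, which the circle-bundle normal form provides automatically.
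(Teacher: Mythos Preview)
Your overall architecture (remove a tubular neighborhood of $E$, prescribe a map on the boundary, extend by obstruction theory, and take a transverse preimage) matches the paper's, but the proposal contains two genuine errors that would make the argument fail as written.

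First, the target of the classifying map cannot be $S^1=K(\ZZ,1)$. Since $E$ is nonorientable, any $3$-manifold $V$ with $\partial V=E$ is nonorientable as well (an orientable $V$ would induce an orientation on its boundary). Hence the normal line bundle of $V$ in $W\times I$ is nontrivial, and the Pontrjagin--Thom construction must use $K(\ZZ/2,1)=\RP^\infty$, not $S^1$. Relatedly, your obstruction groups should carry $\ZZ/2$ (or twisted) coefficients, not $\ZZ$. The paper works with maps to $\RP^N\subset\RP^\infty$ for exactly this reason.

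Second, your description of $\partial N(E)$ is confused: it is the $3$-dimensional circle bundle $S_E\to E$, not the orientation double cover $\wt E$, and since this bundle is not principal (structure group $O(2)$, with $w_1=w_1(E)\neq 0$) there is no global ``angular coordinate'' $S_E\to S^1$ even though $e(E)=0$ guarantees sections exist. What one actually needs on $S_E$ is the $\ZZ/2$-Poincar\'e dual of a \emph{section} $E'\subset S_E$, viewed as a map $S_E\to\RP^\infty$. The subtle point you are missing is that not every section works: one must choose $E'$ so that $[E']=0$ in $H_2(X;\ZZ/2)$ where $X=W\times I\smallsetminus\operatorname{int}N(E)$, since only then does the map on $S_E$ extend over $X$. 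The paper uses a Mayer--Vietoris argument (this is where $[E]=0$ enters) together with the Gysin sequence and the transitive action of $H^1(E;\ZZ_w)$ on homotopy classes of sections (this is where $e(E)=0$ enters) to show such a section exists. Your proposal asserts that ``the hypotheses are exactly what guarantees the obstruction class extends'' without identifying this mechanism, and as stated the boundary model you propose does not exist.
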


\begin{proof}
Every nonorientable surface $E$ can be written as a connected sum of $\RP^2$, and in this context, a surface with $E = \#_{i=1}^{g} \RP^2$ is said to have nonorientable genus $g$. A routine calculation shows that 
$$H_i(E;\ZZ)= \begin{cases} \ZZ & \text{if $i=0,$}\\
\ZZ/2 \oplus \ZZ^{g-1}& \text{if $i=1,$}
\end{cases}\quad \text{and} \quad
H_i(E;\ZZ/2)= \begin{cases} \ZZ/2 & \text{if $i=0,2,$}\\
(\ZZ/2)^{g}& \text{if $i=1.$}
\end{cases}$$

In this case, the assumption that $e(E)=0$ implies that $E$ has nonorientable genus $g=2k$. Thus $E$ abstractly bounds a 3-manifold. This fact, together with the assumption that $[E]=0$ in $H_2(W \times I;\ZZ/2),$ implies that $E$ is trivial in $\fN_2(W \times I)$, the unoriented bordism group, by Theorem 17.1 of \cite{Conner-Floyd}. It remains to show that $E$ bounds a 3-manifold embedded in $W \times I$.

Let $N_E$ be a tubular neighborhood of $E$ in $W\times I$ and let $S_E$ be its boundary. Then $N_E$ is a $D^2$ bundle over $E$ isomorphic to the normal bundle $\nu(E)$  to the embedding $j \co E \to W\times I$, and $S_E$ is the associated $S^1$-bundle. Since $\nu(E)\oplus TE \cong j^*(T(W\times I))$ and $W\times I$ is orientable, it follows that $w_1(\nu(E)) = w_1(E).$

Although the normal bundle $\nu(E)$ is not orientable, it has an Euler class in $H^{2}(E;\ZZ_w),$ cohomology with coefficients twisted by the representation $w\co \pi_1(E) \to \{\pm 1\}$ associated to the first Stiefel-Whitney class $w_1(E)$ \cite{Massey}. Poincar\'{e} duality for twisted coefficients (cf.~Theorem 5.7 in \cite{Davis-Kirk}) implies that $H^2(E; \ZZ_w) \cong H_0(E;\ZZ).$ Thus the Euler class is determined by an integer, namely the normal Euler number $e(E).$ Since $e(E)=0$, the bundle $\nu(E)$ admits a nowhere zero section, which we use to split $\nu(E) = L \oplus \ep^1$. Here $\ep^1$ is trivial and $L$ is the line bundle over $E$ with $w_1(L) = w_1(E)$.

We use obstruction theory to describe the sections of $S_E$ up to homotopy. Indeed, since the higher homotopy groups of $S^1$ are all trivial, there is a single obstruction to the existence of a section of $S_E$ lying in $H^{2}(E;\ZZ_w),$ and the obstruction to finding a homotopy of sections lies in $H^{1}(E;\ZZ_w).$ Thus homotopy classes of sections of $S_E$ are in one-to-one correspondence with elements in $H^{1}(E;\ZZ_w),$ which is isomorphic to $H_1(E;\ZZ)$ by Poincar\'{e} duality for twisted coefficients.

Let $X = W\times I \smallsetminus\text{int}(N_E)$ and consider the Mayer-Vietoris sequence associated with the decomposition $W\times I = X \cup N_E$:
\begin{equation}\label{mv-seq}
\xymatrix{0 \ar[r]^{} & H_2(S_E;\ZZ/2)\ar[r]^{\varphi\qquad \quad} & H_2(N_E; \ZZ/2)\oplus H_2(X; \ZZ/2)\ar[r]^{\quad \;\; \psi} & H_2(W\times I; \ZZ/2).}
\end{equation}
Since $[E]=0$ in $H_2(W \times I;\ZZ/2),$ we have $\psi([E],0) =0$. By exactness, there must exist an element $\al \in H_2(S_E;\ZZ/2)$ with $\varphi(\al)=([E],0).$  We will show that $\al$ can be represented as the image of a section of $S_E.$

The Gysin sequence for the bundle $p \co S_E \to E$ gives that
\begin{equation}\label{gysin}
\xymatrix{0 \ar[r]^{} & H^1(E;\ZZ/2)\ar[r]^{p^{*}} & H^1(S_E; \ZZ/2)\ar[r]^{f} & H^0(E; \ZZ/2)\ar[r]^{\smallsmile\, w_2}& H^2(E;\ZZ/2).}
\end{equation}
Here the second map $f$ is given by evaluating a cohomology class on the circle fiber, and the third map  is trivial since $w_2=w_2(E)=0$. By the sequence \eqref{gysin} and our previous calculations, we have that $H^1(S_E;\ZZ/2)\cong (\ZZ/2)^{2k+1}$, thus $H_2(S_E;\ZZ/2)\cong (\ZZ/2)^{2k+1}$ by  Poincar\'{e} duality.

Given a section of $S_E$, its image determines an element of $H_2(S_E;\ZZ/2)$, which we denote $[E']$. We can write $\varphi = (\varphi_1,\varphi_2)$ for the first map of the Mayer-Vietoris sequence \eqref{mv-seq}, and note that $\varphi_1(\al) = p_*(\alpha)$ under the natural identification $H_2(N_E;\ZZ/2) \cong H_2(E;\ZZ/2)$. For any section, we have $\varphi([E'])=([E],\be)$ for some $\be \in H_2(X;\ZZ/2).$ We claim that the section can be chosen so that $\be =0,$ namely so that $\varphi([E'])=([E],0)$.

Let $\ga \in H^1(S_E;\ZZ/2)$ be the Poincar\'{e} dual of $[E'] \in H_2(S_E;\ZZ/2).$ According to the Gysin sequence \eqref{gysin}, $f(\ga) =1$ in $H^0(E;\ZZ/2)\cong \ZZ/2$. This holds since the intersection number $[E']\cdot [F] =1$ in $S_E$, where $[F] \in H_1(S_E;\ZZ/2)$ is the homology class of the circle fiber.

As previously observed, homotopy classes of sections of $S_E$ are in one-to-one correspondence with elements in $H^1(E;\ZZ_w)$. Let $r \co H^1(E;\ZZ_w)\to H^1(E;\ZZ/2)$ be the map obtained by reducing coefficients mod 2, and note that this map is surjective. The group $H^1(E;\ZZ_w)$ acts on $H^1(S_E;\ZZ/2)$ by $\xi \cdot \ga = \ga + p^*(r(\xi))$, where $\xi \in H^1(E;\ZZ_w)$ and $\ga \in H^1(S_E;\ZZ/2)$. Note that the action is transitive on the fiber $f^{-1}(1)$ of \eqref{gysin}. Thus every element $\ga \in H^1(S_E;\ZZ/2)$ with $f(\ga)=1$ is the Poincar\'{e} dual of the image $[E']$ of some section of $S_E.$ Correspondingly, every element in $\varphi_1^{-1}([E]) \subset H_2(S_E;\ZZ/2)$ is the image of some section. In particular, applied  to the element $\al \in H_2(S_E;\ZZ/2)$ with $\varphi(\al) =([E],0)$,  this shows that $\al$ can be represented by a section. This proves the claim.

Let $[E']$ be the image of a section of $S_E$ with $\varphi([E']) =([E],0)$. Then $E'$ is a compact surface embedded in $S_E$ homeomorphic to $E$. Let $\varphi \co S_E \to BO$ be the classifying map for the normal bundle of the embedding of $E'$ in $S_E$.

Since $BO=\RP^\infty = K(\ZZ/2,1)$,  the set of homotopy classes $[S_E,\RP^\infty]$ is isomorphic to $H^1(S_E;\ZZ/2)$. 
Since $S_E$ is compact, the image of $\varphi$ must lie in $\RP^N \subset \RP^\infty$ for some $N.$  
By the claim, $\varphi$ extends to a map $\varphi \co X \to \RP^N$, 
which we can choose to be transverse to the codimension one submanifold $\RP^{N-1} \subset \RP^N$. Taking $V' = \varphi^{-1}(\RP^{N-1})$,  it follows that $V'$ is a compact 3-manifold in $X$ with $\partial V' = E'.$ The manifold $V$ is obtained from $V'$ by attaching a cylinder in $N_E$ which connects $E$ and $E'$.
\end{proof}


Let $K\subset \Si \times I$ be a knot with spanning surface $F \subset \Si \times I,$ and suppose that $W$ is a compact oriented 3-manifold with boundary $\partial W = \Si$. Let $S \subset W \times I$ be an oriented surface with boundary $\partial S =K,$ and set $E= F \cup S$. 
It is a closed, unoriented surface in $W \times I.$
The hypotheses of \Cref{boundary} require that $[E]=0$ in $H_2(W\times I; \ZZ/2)$ and that it has normal Euler number $e(E)=0$.

The next proposition shows that if $W$ is a handlebody, then up to $S^*$-equivalence, $E$ always bounds.

\begin{theorem} \label{thm-surface-bound}
Let $K\subset \Si \times I$ with spanning surface $F \subset \Si \times I$. Let $W$ be a handlebody with $\partial W =\Si$ and $S \subset W \times I$ an oriented surface with $\partial S =K$. Then there exists a spanning surface $F'$ which is $S^*$-equivalent to $F$ such that $E'=F'\cup S$ bounds a 3-manifold $V\subset W\times I$.
\end{theorem}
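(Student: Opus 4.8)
The plan is to deduce this from \Cref{boundary} by modifying $F$ through $S^*$-moves until, for $E'=F'\cup S$, the two hypotheses of that theorem hold: $[E']=0$ in $H_2(W\times I;\ZZ/2)$ and normal Euler number $e(E')=0$. The first condition comes for free: since $W$ is a handlebody, $W\times I$ is a $4$-dimensional handlebody, hence homotopy equivalent to a wedge of circles, so $H_2(W\times I;\ZZ/2)=0$ (and likewise $H_2(W\times I;\ZZ)=0$). Thus $[F'\cup S]=0$ in $H_2(W\times I;\ZZ/2)$ for every spanning surface $F'$, and all of the work goes into arranging the Euler number.

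For the Euler number I would first observe that $e(E)$ is even for any closed surface of the form $E=F\cup S$. Since $W\times I$ is a $4$-dimensional handlebody, $H^2(W\times I;\ZZ/2)=0$, so $w_2(W\times I)=0$; combining this with the splitting $TE\oplus\nu(E)\cong T(W\times I)|_E$ and Wu's formula $w_2(E)=w_1(E)^2$ for the closed surface $E$, one gets $w_2(\nu(E))=0$, and hence $e(E)\equiv 0\pmod 2$. (Concretely, if $E$ is orientable this just says $e(E)=[E]\cdot[E]=0$, since $[E]=0$ in $H_2(W\times I;\ZZ)=0$.) Next, attaching a half-twisted band to $F$ inside $\Si\times I$ and away from $K$ is an $S^*$-move on $F$, and it changes $e(E)=e(F\cup S)$ by $\pm2$, with either sign realized according to the chirality of the half-twist; moreover such a band only increases the nonorientable genus of $E$, so no genus constraint can obstruct the construction.

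Combining these observations, write $e(E)=2m$. Attach $|m|$ half-twisted bands of the appropriate chirality to $F$, inside $\Si\times I$, away from $K$, and supported in pairwise disjoint balls (so their contributions to $e$ simply add); this yields a spanning surface $F'\subset\Si\times I$ that is $S^*$-equivalent to $F$ and satisfies $e(E')=0$ for $E'=F'\cup S$. If $m\neq0$ then at least one crosscap has been added, so $E'$ is nonorientable; if $m=0$ and $E$ is already nonorientable, take $F'=F$; and if $m=0$ but $E$ is orientable, attach instead a canceling pair of oppositely half-twisted bands, which keeps $e(E')=0$ while making $E'$ nonorientable. In every case $E'=F'\cup S\subset W\times I$ is a closed nonorientable surface with $[E']=0$ in $H_2(W\times I;\ZZ/2)$ and $e(E')=0$, so \Cref{boundary} applies and produces a compact $3$-manifold $V\subset W\times I$ with $\partial V=E'$, which is the claim.

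I expect the main obstacle to be the Euler-number bookkeeping of the second paragraph: checking cleanly that $e(F\cup S)$ is even and that an $S^*$ half-twisted-band move shifts it by exactly $\pm2$ with a controllable sign (this is where one must be careful about orientations and about the move being genuinely local in $\Si\times I$, away from $K$). The remaining points --- keeping $F'$ inside $\Si\times I$, keeping $E'$ embedded, and separating the modifications into disjoint balls --- are routine precisely because the moves are local.
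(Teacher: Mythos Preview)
Your proposal is correct and follows essentially the same strategy as the paper: use the handlebody hypothesis to kill $H_2(W\times I;\ZZ/2)$, use $S^*$-moves to arrange $e(E')=0$, and then invoke \Cref{boundary}.

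The paper's execution is more streamlined in one respect. Rather than proving $e(E)$ is even via the characteristic-class computation $w_2(\nu(E))=w_2(E)+w_1(E)^2=0$, the paper simply uses the additive decomposition $e(E')=e(F')+e(S)$: since $S$ is orientable one has $e(S)=0$, and one can arrange $e(F')=0$ directly by adding half-twisted bands (implicitly using that $e(F)$ is always even for a spanning surface of a knot and that the move shifts it by $\pm 2$). Your Wu-formula argument is a valid alternative and has the virtue of not relying on the additivity $e(E)=e(F)+e(S)$, which itself deserves a word of justification. On the other hand, your description of the half-twisted-band move as taking place ``away from $K$'' is slightly misleading: the move is really a boundary connected sum with a small embedded M\"obius band (one cannot add a crosscap purely in a $3$-ball in the interior of $F$, since $\RP^2$ does not embed in $B^3$), and it is precisely this that makes $e(F)$ change. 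You correctly flag this bookkeeping as the delicate point. Finally, your explicit handling of the nonorientability hypothesis of \Cref{boundary} (via a cancelling pair of crosscaps when $E$ happens to be orientable) fills a small gap that the paper's proof passes over in silence.
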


\begin{proof}
By adding half-twisted bands, we can find a spanning surface $F'$ such that $e(F')=0$. Since $S$ is orientable, $e(S)=0$, and it follows that $e(E)=e(F')+e(S)=0$.
 
Since $W$ is a handlebody, we see that $H_2(W\times I; \ZZ/2)=0$. Thus $[E']=0$. The result now follows from \Cref{boundary}.
\end{proof}

We now show that if $K$ is slice, then we can always find a slice disk $D$ in $W\times I$ for some 3-manifold $W$ such that $E=F\cup D$ satisfies $[E]=0$ in $H_2(W\times I;\ZZ/2)$.

\begin{theorem}\label{discussion}
Let $K$ be slice, then there is a slice disk $D\subset W\times I$ for some 3-manifold $W$ such that $E=F\cup D$ bounds a 3-manifold $V\subset W\times I$.
\end{theorem}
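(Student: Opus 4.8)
The plan is to produce the slice disk inside $W\times I$ for a \emph{handlebody} $W$ with $\partial W=\Si$ and then apply \Cref{boundary}; once $W$ is a handlebody we have $H_2(W\times I;\ZZ/2)\cong H_2(W;\ZZ/2)=0$, so the hypothesis $[E]=0$ holds for free (this is exactly the mechanism already used in \Cref{thm-surface-bound}). First I would unwind the assumption: since $K$ is slice it is concordant to the unknot $U$, so by \Cref{defn:conc} there is a compact oriented $3$-manifold $W_0$ with $\partial W_0=-\Si\sqcup\Si_1$ and an annulus $A\subset W_0\times I$ with $\partial A=K\sqcup U$, where $U\subset\Si_1\times I$ is a trivial knot bounding a small disk $D_0$ inside a ball of $\Si_1\times I$. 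Pushing $D_0$ slightly into the interior and capping the boundary circle $U$ of $A$ with it produces a disk $D:=A\cup D_0$ properly embedded in $W_0\times I$ with $\partial D=K\subset\Si\times I$.

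The substantive step is to make the ambient $3$-manifold a handlebody. Capping the $\Si_1$-boundary of $W_0$ with a handlebody $H_1$ yields a compact oriented $3$-manifold $W:=W_0\cup_{\Si_1}H_1$ with $\partial W=\Si$, in whose thickening $W\times I$ the disk $D$ now sits as a slice disk for $K$. This alone need not make $W$ a handlebody, because the cobordism $W_0$ may carry excess topology, and then $[D]$ need not vanish in $H_2(W\times I,\Si\times I;\ZZ/2)$, so $[E]$ need not vanish either. I would therefore simplify $W$ by surgery: perform surgeries along a finite system of loops in the interior of $W$ whose products with $I$ can be isotoped off $D$ --- possible by general position in the $4$-manifold $W\times I$ --- until the surgered manifold is a handlebody bounding $\Si$ that still contains $D$ as a properly embedded slice disk. (Equivalently, one may appeal to the stable description of Turaev concordance: after stabilizing $\Si$, the concordance to the unknot can be realized inside a thickened surface, which in turn embeds in the thickening of a handlebody.) With $W$ a handlebody we get $[E]=0$ for $E=F\cup D$.

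Finally I would meet the Euler number hypothesis and conclude. As at the start of the proof of \Cref{thm-surface-bound}, adjoining half-twisted bands to $F$ replaces it by an $S^*$-equivalent surface with $e(F)=0$; the oriented disk $D$ has trivial normal bundle and contributes no further twisting, so $e(E)=0$. (Strictly, this is why the conclusion is best read for a spanning surface $S^*$-equivalent to $F$; the Gordon-Litherland invariants depend only on that class in any case.) Now $E$ is a closed surface in $W\times I$ with $[E]=0$ in $H_2(W\times I;\ZZ/2)$ and $e(E)=0$, so \Cref{boundary} produces a compact $3$-manifold $V\subset W\times I$ with $\partial V=E$. The main obstacle is the handlebody reduction in the second paragraph --- arranging the slice disk to lie inside the thickening of a handlebody, either via the transversality argument placing the surgeries on $W$ in the complement of $A$, or via the stable formulation of concordance; everything after that is routine.
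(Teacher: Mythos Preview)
Your strategy—reduce to a handlebody $W$ and invoke \Cref{thm-surface-bound}—is natural, but the key step in the second paragraph does not go through. In the $4$-manifold $W\times I$ the surgery annulus $\gamma\times I$ and the slice disk $D$ are both $2$-dimensional, so general position only arranges that they meet transversally in finitely many points; it does not make them disjoint. The mod~$2$ intersection number of $D$ with $\gamma\times I$ equals the evaluation of the class $j_*[D]\in H_2(W\times I,\Si\times I;\ZZ/2)\cong H^1(W;\ZZ/2)$ on $[\gamma]\in H_1(W;\ZZ/2)$, and when this is nonzero no isotopy of $\gamma$ inside $W$ (which is what you need to preserve the product form $W'\times I$ after surgery) can separate them. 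If instead you isotope the annulus freely in $W\times I$, you lose the product structure and the surgered $4$-manifold is no longer of the shape $(\text{3-manifold})\times I$. Your parenthetical alternative via a ``stable description of Turaev concordance'' would need a precise result that every Turaev slice disk can, after stabilization, be realized in the thickening of a handlebody; that is not a general position fact and you do not supply it.

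The paper's proof attacks exactly this obstruction rather than trying to simplify $W$. Viewing $j_*[D]\in H^1(W;\ZZ/2)$ as a homomorphism $H_1(W;\ZZ)\to\ZZ/2$, one passes to the associated double cover $\wt W\to W$; since $\pi_1(\Si)$ lies in the kernel, $\partial\wt W$ contains two disjoint copies of $\Si$, one of which is capped with a handlebody. The disk $D$, being simply connected, lifts to a slice disk $\wt D\subset\wt W\times I$, and the pulled-back obstruction vanishes by construction, so $[F\cup\wt D]=0$ in $H_2(\wt W\times I;\ZZ/2)$ and \Cref{boundary} applies. In short, the class blocking your surgery argument is precisely the one the paper isolates and then kills by a covering trick rather than by surgery.
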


\begin{proof}
As before, by adding half-twisted bands, we can arrange that $e(E)=0$.

Consider the long exact sequence in homology for the triple $(W \times I, \Si \times I, K)$:
\begin{equation}\label{long-exact-seq}
\xymatrix{H_2(\Si \times I, K;\ZZ/2)\ar[r]^{i_*} & H_2(W \times I, K;\ZZ/2)\ar[r]^{j_*\quad} &H_2(W \times I, \Si \times I;\ZZ/2).}
\end{equation}

The spanning surface $F$ gives an element $[F] \in H_2(\Si \times I, K;\ZZ/2),$ which maps to $[K] \in H_1(K;\ZZ/2)$ under the long exact sequence of the pair $(\Si \times I, K)$. The homology group $H_2(\Si \times I, K;\ZZ/2)$ is isomorphic to the Klein four-group $\ZZ/2 \times \ZZ/2$, and there are exactly two elements in $H_2(\Si \times I, K;\ZZ/2)$ that map to $[K]$. Both occur as the homology class of a spanning surface for $K$.

The slice disk $D$ in $W \times I$ gives an element $[D] \in H_2(W \times I, K;\ZZ/2)$ which also maps to $[K] \in H_1(K;\ZZ/2)$ under the long exact sequence of the pair $(W \times I, K)$. Thus, $E=F \cup D$ has $[E]=0$ in $H_2(W \times I;\ZZ/2)$ provided that $[D]$ pulls back under $i_*$ in \eqref{long-exact-seq} to a class in $H_2(\Si \times I, K;\ZZ/2).$

By exactness of \eqref{long-exact-seq}, we see that $[D]$ pulls back if and only if it lies in the kernel of $j_*$. Thus, the obstruction is the element $j_*([D])$ in 
$$ H_2(W \times I, \Si \times I;\ZZ/2)\cong H_2(W,\Si;\ZZ/2) \cong H^1(W;\ZZ/2),$$ 
where the first isomorphism follows from $(W \times I, \Si \times I) \simeq (W, \Si)$ and the second from Poincar\'{e} duality. 
The corresponding element $\ga \in H^1(W;\ZZ/2)$ determines a homomorphism $H_1(W;\ZZ) \to \ZZ/2$.

Let $\wt{W}$ be the associated two-fold cover of W. Then since $\pi_1(\Si)$ lies in the kernel of $\ga$, it restricts to the trivial cover on $\Si.$  Therefore, it has two copies of $\Si$ in its boundary. Attach a handlebody to one of them to get a 3-manifold with boundary $\Si$. Consider now the knot $K$ and spanning surface $F $ in the (thickening of the) boundary of $\wt{W}$. 
The disk $D$ lifts to  a slice disk $\wt{D}$ in $\wt{W} \times I$. 
Notice $[\wt{D}]$ maps to zero under the map $H_2(\wt{W} \times I, K;\ZZ/2) \to H_2(\wt{W} \times I, \Si \times I;\ZZ/2)$.

The result now follows from \Cref{boundary}.
\end{proof}

If $E=F\cup S$, with $g(S)>0$, and $\pi^{-1}(S)$ is disconnected, the same argument applies to show there is a 3-manifold $\wt{W}$ such that $\wt{E}=F\cup\wt{S}$ bounds a 3-manifold in $\wt{W}\times I$. The case which is problematic is when $\pi^{-1}(S)$ is connected.


\section{Slice obstructions} \label{sec3}
Throughout this section, $K \subset \Si \times I$ will be a $\ZZ/2$ null-homologous knot in a thickened surface. 
Recall that a knot in a thickened surface is $\ZZ/2$ null-homologous if and only if it admits a spanning surface.

Our goal in this section is to show that, for a knot $K\subset \Si \times I$ with spanning surface $F$,
if $K$ is slice and $\det(K,F)\neq 0$, then 
$\si(K,F)=0$ and $\det(K,F)$ is a perfect square. 

If $K$ is slice, then \Cref{discussion} implies that there is a 3-manifold $W$ with slice disk $D \subset W \times I$ such that the closed surface $E= F \cup D$ bounds a compact 3-manifold $V\subset W\times I$.

\begin{lemma} \label{kernel}
There is a generating set $\{\ga_1, \ldots, \ga_{2g}\}$ for $H_1(F; \ZZ)$ such that $\ga_1, \ldots, \ga_{g}$ lie in the kernel of the map $H_1(F;\ZZ) \to H_1(V;\QQ).$
\end{lemma}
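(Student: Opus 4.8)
The plan is to extract the generating set from the algebraic topology of the pair $(V, F)$, where $F = \partial E$ sits inside $E = \partial V$ as "half" of the boundary of the 3-manifold $V$. First I would set up the long exact sequence of the pair $(V, E)$ with rational coefficients, together with the fact that half-lives-half-dies applies to the 3-manifold $V$ with boundary $E$: the kernel of $H_1(E;\QQ) \to H_1(V;\QQ)$ is a Lagrangian (half-dimensional) subspace with respect to the intersection pairing on $H_1(E;\QQ)$. Since $E$ is a closed nonorientable surface of nonorientable genus $g = 2k$, one has $H_1(E;\QQ) \cong \QQ^{2k-1}$, but more relevantly I want to work with $F \subset E$; the Mayer–Vietoris decomposition $E = F \cup_L D$ (glued along $L = K$, which is a circle) relates $H_1(F;\QQ)$, $H_1(D;\QQ) = 0$, and $H_1(E;\QQ)$. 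Because $D$ is a disk, the map $H_1(F;\QQ) \to H_1(E;\QQ)$ is surjective with kernel generated by $[L] = [K]$ (or is an isomorphism, depending on whether $[K]$ bounds in $F$), so the rational intersection form on $H_1(F;\QQ)$ descends compatibly to that on $H_1(E;\QQ)$.

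The key step is then the half-lives-half-dies argument: let $G = \ker\big(H_1(F;\QQ) \to H_1(V;\QQ)\big)$. Using the commuting square relating $H_1(F;\QQ) \to H_1(V;\QQ)$ and $H_1(E;\QQ) \to H_1(V;\QQ)$, and the fact that the latter kernel is Lagrangian of dimension $\tfrac12 \dim H_1(E;\QQ)$, I would deduce $\dim_\QQ G \geq g$ (where $2g = \dim H_1(F;\QQ)$, i.e. $F$ has genus $g$ in the relevant sense — here $2g = \operatorname{rank} H_1(F;\ZZ)$). The inequality $\dim G \geq g$ is the one I actually need; I do not need equality. Concretely, the standard argument computes $\dim \operatorname{im}(H_1(F;\QQ) \to H_1(V;\QQ))$ using Poincaré–Lefschetz duality for $V$ and the long exact sequence of $(V,F)$ — the image has dimension at most $\tfrac12 \dim H_1(F;\QQ) = g$ because the intersection form on $H_1(F;\QQ)$ must vanish on the kernel (as these classes bound in $V$, one pushes a bounding chain off to compute the linking/intersection and gets zero), and a nondegenerate pairing on a space with a totally isotropic subspace of dimension $d$ and total dimension $2g$ forces $d \le g$; applied to $\operatorname{coker}$ one also gets $d \ge g$, hence $d = g$. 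Once $\dim_\QQ G \geq g$, I choose $\ga_1,\dots,\ga_g$ to be $\ZZ$-homology classes spanning a rank-$g$ subgroup of $G \cap H_1(F;\ZZ)$ (clearing denominators is harmless since membership in $G$ is a rational condition), and then extend to a generating set $\{\ga_1,\dots,\ga_{2g}\}$ of $H_1(F;\ZZ)$; the extension is possible after possibly modifying the $\ga_i$ to a basis of a primitive rank-$g$ summand, which is automatic because $G \cap H_1(F;\ZZ)$ is a pure subgroup (the quotient $H_1(F;\ZZ)/(G \cap H_1(F;\ZZ))$ injects into a $\QQ$-vector space, hence is torsion-free).

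The main obstacle I anticipate is bookkeeping around $F$ versus $E$: the clean half-lives-half-dies statement is for the closed surface $E = \partial V$, and I must transfer it to the surface-with-boundary $F$. The subtlety is whether $[K]$ is zero or nonzero in $H_1(F;\QQ)$ and how the intersection forms of $F$ and $E$ are related through the disk $D$; I expect that since $[E] = F \cup D$ and $D$ contributes nothing rationally in degree $1$, the restriction map $H_1(F;\QQ) \twoheadrightarrow H_1(E;\QQ)$ is surjective with kernel spanned by $[K]$, and the intersection form on $H_1(F;\QQ)$ (which may be degenerate, with radical containing $[K]$) pushes forward to the nondegenerate form on $H_1(E;\QQ)$. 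The Lagrangian for $E$ then pulls back to a subspace of $H_1(F;\QQ)$ of dimension $\geq \tfrac12\dim H_1(E;\QQ) + (\text{something involving }[K])$, and a short dimension count confirms it meets $G$ in dimension $\geq g$. Alternatively, and perhaps more cleanly, one avoids $E$ entirely: run half-lives-half-dies directly for the pair $(V, F)$ viewing $F$ as a subsurface of $\partial V$, using the long exact sequence of $(V,F)$ and Lefschetz duality $H_i(V, F;\QQ) \cong H^{3-i}(V, \partial V \smallsetminus F;\QQ)$ — this is the version I would ultimately write up, as it sidesteps the Mayer–Vietoris step and delivers $\dim_\QQ G \geq g$ directly.
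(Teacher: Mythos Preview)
Your argument has a genuine gap: it implicitly assumes that $V$ is orientable. Both the half-lives-half-dies statement over $\QQ$ and the Lefschetz duality $H_i(V,F;\QQ)\cong H^{3-i}(V,\partial V\smallsetminus F;\QQ)$ you invoke require $V$ to be orientable. But here $E=F\cup D$ is the boundary of $V$, and whenever the spanning surface $F$ is nonorientable (which is the main case of interest in this paper), $E$ is a closed nonorientable surface, forcing $V$ itself to be nonorientable. A concrete symptom that the rational argument cannot work as stated: for $E$ nonorientable of nonorientable genus $2k$ one has $\dim_\QQ H_1(E;\QQ)=2k-1$, which is odd, so there is no ``half-dimensional Lagrangian'' for any nondegenerate pairing on $H_1(E;\QQ)$; indeed there is no rational intersection form on $H_1$ of a nonorientable surface.

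The paper handles exactly this issue by splitting into cases. When $V$ is orientable it runs the $\QQ$-coefficient half-lives-half-dies you describe. When $V$ is nonorientable it switches to $\ZZ/2$ coefficients, where Poincar\'e--Lefschetz duality holds unconditionally, obtaining $\dim_{\ZZ/2}\ker\big(H_1(E;\ZZ/2)\to H_1(V;\ZZ/2)\big)=\tfrac12\dim_{\ZZ/2}H_1(E;\ZZ/2)$. One then chooses integral classes $\gamma_1,\ldots,\gamma_g\in H_1(F;\ZZ)$ whose mod~$2$ reductions generate this kernel; since each $\gamma_i$ maps to an element of $H_1(V;\ZZ)$ that is zero mod~$2$, hence torsion (of $2$-power order), it dies in $H_1(V;\QQ)$. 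Your approach can be repaired along the same lines, but the nonorientable case must be addressed separately.
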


\begin{proof}
Let $i\co E\to V$ be the inclusion map. First assume $V$ is orientable. Then by \cite[VI, Theorem 10.4]{Bredon} and/or \cite[Lemma  3.5]{Hatcher-2007}, $\,{\rm dim}\,(\,{\rm Ker}\,i_{*})=\frac{1}{2} \,{\rm dim}\,(H_1(E;\QQ))$. By a Mayer-Vietoris argument, $H_1(F; \ZZ)\cong H_1(E; \ZZ)$, and by the Universal Coefficient Theorem, $H_1(E; \ZZ)\otimes \QQ\cong H_1(E; \QQ)$, and the conclusion follows in this case.

Now assume $V$ is not orientable.
If $i_{*}\co H_1(E;\ZZ/2)\to H_1(V;\ZZ/2)$, then $\,{\rm dim}\,(\,{\rm Ker}\,i_{*}) =\frac{1}{2} \,{\rm dim}\,(H_1(E;\ZZ/2))$. We can choose a generating set $\{\ga_1, \ldots, \ga_{2g}\}$ for $H_1(F; \ZZ)$, such that the mod 2 reduction of $\{\ga_1, \ldots, \ga_{g}\}$ generates $\,{\rm Ker}\,i_{*}$, so they must all be mapped to torsion elements in $H_1(V;\ZZ)$, and therefore, they all lie in the kernel of the map $H_1(F;\ZZ) \to H_1(V;\QQ).$
\end{proof}

Let $Q$ be a symmetric bilinear form  on a vector space $V$ over $\RR$. A subspace $U\subset V$ is called \textit{totally isotropic} if $Q$ vanishes on $U$. If $U$ is a totally isotropic subspace of maximal dimension, then
\begin{equation} \label{max_total_isot}
2 \dim U = r-|\si|+n,
\end{equation}
where $r, \si$ and $n$ are the rank, signature, and nullity of $Q$.

\begin{theorem}\label{slice}
Let $K \subset \Si \times I$ be a knot with spanning surface $F$ such that $e(F)=0$. Suppose further that $K$ is slice. Then
$$|\si(K,F)|\leq n(K,F).$$ 
In particular, if $n(K,F)=0$, then $\si(K,F)=0$.
\end{theorem}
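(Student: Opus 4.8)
The plan is to reduce the inequality to the existence of a half-dimensional metabolizer for the Gordon-Litherland form and then extract one from the bounding $3$-manifold that sliceness provides. Since $K$ is a knot and $e(F)=0$, the correction term $\tfrac12 e(F,K)=\tfrac12 e(F)$ vanishes, so $\si(K,F)=\sig(\cG_F)$ and $n(K,F)=\operatorname{nullity}(\cG_F)$. Applying the identity \eqref{max_total_isot} to $Q=\cG_F\otimes\RR$ on $H_1(F;\RR)$, a maximal totally isotropic subspace $U$ satisfies $2\dim U=r-|\si(K,F)|+n(K,F)$ with $r=\dim_\RR H_1(F;\RR)$; hence $|\si(K,F)|\le n(K,F)$ will follow once we produce a totally isotropic subspace $H\subset H_1(F;\RR)$ with $\dim H\ge r/2$.

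For the geometry, recall (as observed just before \Cref{kernel}) that since $K$ is slice and $e(E)=e(F)+e(D)=0$, the combination of \Cref{discussion} and \Cref{boundary} yields a compact oriented $3$-manifold $W$ with $\partial W=\Si$, a slice disk $D\subset W\times I$, and a compact $3$-manifold $V\subset W\times I$ with $\partial V=E:=F\cup D$. \Cref{kernel} then supplies a generating set $\{\ga_1,\dots,\ga_{2g}\}$ of $H_1(F;\ZZ)$ with $\ga_1,\dots,\ga_g$ mapped into the kernel of $H_1(F;\ZZ)\to H_1(V;\QQ)$. A Mayer-Vietoris comparison for $E=F\cup D$ (glued along $K$) gives $H_1(F;\ZZ/2)\cong H_1(E;\ZZ/2)$, which has even $\ZZ/2$-dimension $2g$; since $H_1(F;\ZZ)$ is free this forces it to have rank $2g$, so $\{\ga_1,\dots,\ga_{2g}\}$ is in fact a basis and $H:=\operatorname{span}_\RR\langle\ga_1,\dots,\ga_g\rangle$ has dimension exactly $g=r/2$.

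The crux is to show $\cG_F$ vanishes on $H$, i.e.\ $\cG_F(\ga_i,\ga_j)=0$ for all $i,j\le g$; this is the analogue for the Gordon-Litherland pairing of the classical fact that a slice knot has metabolic Seifert form. I would prove it geometrically. Clearing denominators, choose $m>0$ with $m\ga_i=\partial A_i$ for $2$-chains $A_i\subset V$. By $\ZZ$-bilinearity and the defining formula $\cG_F(\ga_i,\ga_j)=\tfrac12\bigl(\lk(\tau\ga_i,\ga_j)+\lk(\tau\ga_j,\ga_i)\bigr)$, it is enough to see that each linking number $\lk(\tau\ga_i,\ga_j)$ vanishes. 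I would realize this linking number as an intersection number in $W\times I$: the transfer curve $\tau\ga_i$, a push-off of $\ga_i$ off $F$, bounds a $2$-chain $C_i\subset W\times I$ obtained by pushing $A_i$ slightly off $F$ into $\operatorname{int}(W\times I)$, and similarly $\ga_j$ bounds a $2$-chain from $A_j$; pushing these apart into a neighborhood of $V$ makes them disjoint except along $\Si\times I$, where they meet only in the two disjoint curves $\tau\ga_i$ and $\ga_j$ on $F$, so the signed count is $0$. Granting this, $H$ is totally isotropic of dimension $r/2$, so $2\dim U\ge r$ in \eqref{max_total_isot}, giving $|\si(K,F)|\le n(K,F)$; the final sentence is the special case $n(K,F)=0$.

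The one genuinely delicate point is the vanishing $\cG_F|_H=0$. It requires translating the Gordon-Litherland linking form, which is defined intrinsically inside $\Si\times I$ via $2$-chains capped off in $\Si\times\{1\}$, into honest intersection numbers in $W\times I$, and this translation must be carried out while $F$ and $V$ may both be nonorientable, so that $\tau\ga_i$ can be a connected double cover of a one-sided curve and the relevant normal bundles carry $w_1(F)$-twisting. Nailing down the signs in that dictionary, together with the bookkeeping needed to pass from rationally null-homologous classes to genuinely bounding ones, is where the real effort goes; by contrast, the linear-algebra reduction in the first paragraph and the $4$-manifold input of the second are immediate consequences of \Cref{boundary}, \Cref{discussion} and \Cref{kernel}.
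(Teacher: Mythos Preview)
Your overall plan coincides with the paper's: invoke \Cref{discussion} to produce $V\subset W\times I$ with $\partial V=F\cup D$, use \Cref{kernel} to find half the generators of $H_1(F;\ZZ)$ in $\ker\bigl(H_1(F;\ZZ)\to H_1(V;\QQ)\bigr)$, show that $\cG_F$ vanishes on the subgroup they span, and conclude via \eqref{max_total_isot}. The dimension count and the reduction $\si(K,F)=\sig(\cG_F)$ are correct.

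The gap is exactly where you flag it, but your proposed construction does not do the job. You describe $\tau\ga_i$ as ``a push-off of $\ga_i$ off $F$'' and propose to bound it by ``pushing $A_i$ slightly off $F$ into $\operatorname{int}(W\times I)$''. But $\tau\ga_i$ is by definition the preimage of $\ga_i$ in the $\{\pm1\}$-bundle $\widetilde F\to F$: when $\ga_i$ is one-sided in $F$ it is a connected double cover, and when $\ga_i$ is two-sided it is two parallel push-offs. A \emph{single} normal push of $A_i$ has boundary a single push-off of $m\ga_i$, not $m\,\tau\ga_i$, so your $C_i$ does not bound the right curve, and the ensuing disjointness claim is moot.

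The paper's device is short and sidesteps all the orientation and sign worries you anticipate. Let $N(V)$ be a tubular neighborhood of $V$ in $W\times I$ with projection $p\co N(V)\to V$. Since $W\times I$ is orientable, $N(V)$ is a (possibly twisted) $I$-bundle over $V$, and $\partial N(V)\to V$ is the associated $\{\pm1\}$-bundle; restricted over $F\subset\partial V$ this is exactly $\widetilde F\to F$. Hence if $A\subset V$ has $\partial A$ a multiple of $a$, then $\widetilde A:=p^{-1}(A)\cap\partial N(V)$ is a two-fold cover of $A$ with $\partial\widetilde A$ the corresponding multiple of $\tau a$. Now $\widetilde A\subset\partial N(V)$ while $B\subset V$, so $\widetilde A$ and $B$ are disjoint and $\lk(\tau a,b)=\widetilde A\cdot B=0$. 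With this one geometric observation in hand, the rest of your argument goes through verbatim.
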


\begin{proof}
By \Cref{discussion} there is a slice disk $D\subset W\times I$ for some 3-manifold $W$ such that $E=F\cup D$ bounds a 3-manifold $V\subset W\times I$.
Now by \Cref{kernel}, we have a set $\{ \ga_1, \ldots, \ga_{2g}\}$ of generators for $H_1(F;\ZZ)$ such that $\ga_1, \ldots, \ga_{g}$ lie in the kernel of the map $H_1(F;\ZZ) \to H_1(V;\QQ).$ Let $U$ be the subgroup of $H_1(F;\ZZ)$ generated by $\ga_1, \ldots, \ga_{g}$. For any $a\in U$, there is a surface $A$ with boundary a multiple of $a$. Now suppose $b\in U$, and $B$ is a surface with boundary a multiple of $b$. We have
$$\cG_{F}(a,b)=\tfrac{1}{2}(\lk(\tau a,b)+\lk(\tau b,a)).$$
Let $N(V)$ be a tubular neighborhood of $V$ in $W \times I$. Then $N(V)$ is a (possibly twisted) $I$-bundle over $V$ with boundary $\partial N(V)$ a $\{\pm 1\}$-bundle over $V$. If $p\co N(V) \to V$ is the projection map, then $p^{-1}(A) \cap \partial N(V)$ gives a surface $\wt{A}$ in $W \times I$ with boundary $\tau a$. (In fact, $p|_{\wt{A}}\co  \wt{A} \to A$ is a two-fold covering.) Notice that $\wt{A}$ and $B$ are disjoint, hence the intersection $\wt{A} \cdot B =0.$ Thus,  $\lk(\tau a,b) = \wt{A} \cdot B = 0$. A similar argument shows that $\lk(\tau b,a)=0$. Thus $\cG_{F}(a,b)=0$ for $a,b \in U.$ Hence $U$ is a totally isotropic subspace for the Gordon-Litherland pairing. Equation \eqref{max_total_isot} gives the inequality:
$$2\dim U \leq \dim H_1(F;\ZZ)-|\sig (\cG_{F})|+n(K,F),$$ but since $\dim U = \tfrac{1}{2} \dim H_1(F;\ZZ)$ in our case, it follows that  $|\sig (\cG_{F})|\leq n(K,F).$

Now the self-intersection number of $E$, which is the normal Euler number of $E$, vanishes. To calculate
the self-intersection number of $E$, we push a copy of $E$ off itself.
Since the slice disk $D$ is orientable, we can push it off so there are no self-intersection points over the slice disk. Therefore, the intersection number is
$-\lk(K,K')=e(F,K)=0$. It follows that $|\sig\cG_F|=|\si(K,F)|\leq n(K,F).$
\end{proof}

\begin{remark} \label{remark-tube}
If $L \subset \Si \times I$ is a link with spanning surface $F \subset \Si \times I$, we can  construct a new surface $F'=F\#_\tau \Si$ by connecting $F$ to a parallel copy of $\Si$ near $\Si \times \{0\}$ by a small thin tube $\tau$.
Notice that if $F'=F\#_\tau \Si$, then $e(F')=e(F)$ and $[F'\cup D]=[F\cup D]$ in $H_2(W\times I;\ZZ/2)$. In particular, if $F\cup D$ satisfies the hypothesis of  \Cref{slice}, then $F'\cup D$ does as well.
\end{remark}

\begin{theorem}\label{determinant}
Let $K \subset \Si \times I$ be a knot with spanning surface $F$ such that $e(F)=0$. Suppose further that $K$ is slice. Then  $\det (K,F)$ is
a perfect square.
\end{theorem}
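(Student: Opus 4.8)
The plan is to exploit the totally isotropic subspace $U \subset H_1(F;\ZZ)$ of half-rank produced in the proof of \Cref{slice}, but now to extract arithmetic (rather than merely sign) information from the Gordon-Litherland form $\cG_F$. Since $K$ is slice and $e(F)=0$, \Cref{discussion} gives a 3-manifold $V \subset W\times I$ with $\partial V = E = F\cup D$, and \Cref{kernel} gives generators $\ga_1,\dots,\ga_{2g}$ of $H_1(F;\ZZ)$ so that $U = \langle \ga_1,\dots,\ga_g\rangle$ lies in the kernel of $H_1(F;\ZZ)\to H_1(V;\QQ)$; the argument in \Cref{slice} shows $\cG_F$ vanishes identically on $U$. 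Writing $\cG_F$ as a $2g\times 2g$ symmetric integer matrix $G$ in this basis, the vanishing on $U$ means $G$ has the block form $\left(\begin{smallmatrix} 0 & B \\ B^{\tr} & C\end{smallmatrix}\right)$ with $B,C$ integer matrices of size $g\times g$ and $g\times g$. Then $\det G = (-1)^{g}\det(B)^2$, so $|\det G| = \det(B)^2$ is a perfect square, giving $\det(K,F) = |\det \cG_F|$ a perfect square as claimed. The hypothesis $\det(K,F)\neq 0$ from the section's stated goal guarantees $B$ is nonsingular so this is a genuine nonzero square.

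First I would recall the setup from \Cref{slice} verbatim: invoke \Cref{discussion} to get $D$, $W$, $V$ with $E=F\cup D=\partial V$, then invoke \Cref{kernel} and the $\lk(\tau a,b) = \wt A\cdot B = 0$ computation to conclude $\cG_F|_{U}\equiv 0$ where $U$ has rank $g = \tfrac12\operatorname{rank}H_1(F;\ZZ)$. Second, I would complete $\{\ga_1,\dots,\ga_g\}$ to the full generating set and write the Gram matrix $G = (\cG_F(\ga_i,\ga_j))$ in the asserted block form. Third, the linear algebra: for a symmetric matrix with a zero $g\times g$ diagonal block, a Laplace/cofactor expansion (or the identity $\det\left(\begin{smallmatrix} 0 & B \\ B^{\tr} & C\end{smallmatrix}\right) = (-1)^g\det(B^{\tr})\det(B) = (-1)^g(\det B)^2$) gives that the determinant is $\pm$ a square. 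Taking absolute values finishes it.

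One subtlety worth flagging: \Cref{kernel} produces a generating set for $H_1(F;\ZZ)$, not necessarily a basis, since $H_1(F;\ZZ)$ may have $2$-torsion (indeed $H_1(F;\ZZ)\cong \ZZ/2\oplus\ZZ^{2g-1}$ type phenomena occur for nonorientable $F$). The determinant of the Gordon-Litherland pairing, however, is computed from a presentation matrix, and torsion generators contribute rows/columns that do not affect $|\det\cG_F|$ in the relevant way — or, more cleanly, one works with $H_1(F;\ZZ)/\text{torsion}$, on which the induced form has the same determinant (up to the usual conventions already fixed in \cite{Boden-Chrisman-Karimi-2021}) and on which $U$ (mod torsion) is still a half-rank totally isotropic summand. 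I expect this bookkeeping about torsion versus free part, and checking that the half-rank count survives passage to $H_1(F;\ZZ)/\text{torsion}$, to be the main point requiring care; the block-matrix determinant identity itself is routine. An alternative that sidesteps torsion entirely is to note that $\det(K,F)$ can be computed from a Goeritz matrix over $\ZZ$ and run the same isotropic-subspace argument there, but keeping the proof parallel to that of \Cref{slice} is cleaner.
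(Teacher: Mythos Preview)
Your proposal is correct and follows essentially the same approach as the paper: set up the block form $\left(\begin{smallmatrix}0 & A\\ A^{\tr} & B\end{smallmatrix}\right)$ from the half-rank isotropic subspace and conclude $\det = (-1)^g(\det A)^2$. The paper computes this determinant via a polynomial trick (multiplying by $\left(\begin{smallmatrix}I & 0\\ -A^{\tr} & xI\end{smallmatrix}\right)$, dividing by $x^g$, then setting $x=0$), whereas you invoke the identity directly; both are routine. Your flag about torsion in $H_1(F;\ZZ)$ is well taken---the paper silently upgrades \Cref{kernel}'s ``generating set'' to a ``basis'' without comment, so you are being more careful on exactly the point the paper elides.
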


\begin{proof}
Choose a slice disk $D\subset W\times I$ as in the proof of \Cref{slice}.
Let $\{\ga_1, \ldots, \ga_{2g}\}$ be a basis for $H_1(F; \ZZ)$, obtained by \Cref{kernel}. Then by \Cref{slice}, the matrix of $\cG_{F}$ with respect to this basis has the form
$\begin{bmatrix} 0&A \\ A^\tr & B  \end{bmatrix}$, where $A,B$ are $g\times g$ matrices. Then
\begin{equation*}
\begin{split}
\begin{bmatrix} I& 0 \\ -A^\tr & xI  \end{bmatrix}
\begin{bmatrix} xI& A \\ A^\tr & B  \end{bmatrix}&=
\begin{bmatrix} xI& A \\ 0 & -A^\tr A+xB  \end{bmatrix},\\
x^g \det \begin{bmatrix} xI& A \\ A^\tr & B  \end{bmatrix}&=
x^g \det (-A^\tr A+xB).
\end{split}
\end{equation*}
We divide both sides by $x^g$, and in the remaining equation put $x=0$, it follows that
$$\det \begin{bmatrix} 0&A \\ A^\tr & B  \end{bmatrix}=(-1)^g(\det A)^2,$$
therefore, $\det (K,F)$ is a perfect square.
\end{proof}
\begin{remark}
For a null-homologous knot $K \subset \Si \times I$ with Seifert surface $F\subset \Si \times I$, 
one can show that $\det(K,F)= \nabla^{+}_{K,F}(-1)=\nabla^{-}_{K,F}(-1)$. Here $\nabla^{\pm}_{K,F}(t)$ refers to the directed Alexander polynomial defined in \S 2.4 of \cite{Boden-Chrisman-Gaudreau-2020}. Thus, for these knots, \Cref{determinant} can be deduced from Theorem 2.8 of\cite{Boden-Chrisman-Gaudreau-2020}, giving the Fox-Milnor condition that $\nabla^{\pm}_{K,F}(t) = f^{\pm}(t)f^{\pm}(t^{-1})$ for some $f^\pm(t) \in \ZZ[t]$ when $K$ is slice. 
\end{remark}

\begin{theorem} \label{thm-sign}
Let $K \subset \Si \times I$ be a  knot with spanning surface $F$  such that $e(F)=0$. Suppose further that $S \subset W \times I$ is an orientable surface with genus $g(S)$  and boundary $\partial S=K$. Assume that the closed surface $E= F \cup S$ satisfies $[E]=0$ in $H_2(W\times I; \ZZ/2).$  Then $$|\si(K,F)| \leq 2g(S)+n(K,F).$$ 
In particular, if $n(K,F)=0$, then $\si(K,F)\leq 2 g(S)$.
\end{theorem}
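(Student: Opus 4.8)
The plan is to extend the proof of \Cref{slice}, keeping track of the genus $g(S)$. First I would set up the ambient $3$-manifold. As in the proof of \Cref{thm-surface-bound}, $e(S)=0$ since $S$ is orientable, so $e(E)=e(F)+e(S)=0$ by the hypothesis $e(F)=0$; moreover we may assume $F$, hence $E$, is nonorientable, since otherwise we can replace $F$ by an $S^*$-equivalent spanning surface obtained by attaching two half-twisted bands in a way that leaves the Euler number equal to $0$, a modification that changes neither $\si(K,F)$, $n(K,F)$, nor the class $[E]$ in $H_2(W\times I;\ZZ/2)$. With $[E]=0$, $e(E)=0$, and $E$ closed and nonorientable, \Cref{boundary} then supplies a compact $3$-manifold $V\subset W\times I$ with $\partial V=E$. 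Since $e(F,K)=e(F)=0$ for a knot, we have $\si(K,F)=\sig(\cG_F)$, so it is enough to bound $|\sig(\cG_F)|$.

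The heart of the argument is an analogue of \Cref{kernel}: I claim there is a subgroup $U\subseteq H_1(F;\ZZ)$ with $\operatorname{rank}U\ge \tfrac12\operatorname{rank}H_1(F;\ZZ)-g(S)$ lying in the kernel of the map $H_1(F;\ZZ)\to H_1(V;\QQ)$ induced by $F\subset E=\partial V\subset V$. The new input is that $E=F\cup_K S$ with $F\cap S\simeq S^1$, so the Mayer-Vietoris sequence shows $H_1(F)\oplus H_1(S)\to H_1(E)$ is surjective (with any coefficients); since $H_1(F;\ZZ)$ is free of rank $r:=\operatorname{rank}H_1(F;\ZZ)$ and $H_1(S;\ZZ/2)\cong(\ZZ/2)^{2g(S)}$, this gives $\dim H_1(E;\ZZ/2)\le r+2g(S)$, and likewise $\dim_\QQ H_1(E;\QQ)\le r+2g(S)$. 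Feeding this into the half-lives-half-dies principle for $\partial V\to V$ — over $\QQ$ when $V$ is orientable, and over $\ZZ/2$ in general — bounds the dimension of the image of $H_1(F)\to H_1(V)$ (with the relevant coefficients) by $\tfrac12 r+g(S)$, and then, exactly as in the proof of \Cref{kernel}, one selects a partial basis of $H_1(F;\ZZ)$ whose image is torsion in $H_1(V;\ZZ)$ and lets $U$ be the subgroup it generates.

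Given such a $U$, the linking computation from the proof of \Cref{slice} goes through unchanged: for $a,b\in U$, a multiple $na$ bounds a properly embedded surface $A\subset V$ with $\partial A\subset F$, the surface $\wt A=p^{-1}(A)\cap\partial N(V)$ in a tubular neighborhood $N(V)$ of $V$ has boundary a multiple of $\tau a$ and is disjoint from a surface bounding a multiple of $b$, so $\lk(\tau a,b)=0=\lk(\tau b,a)$ and therefore $\cG_F$ vanishes on $U$. Hence $U\otimes\RR$ is totally isotropic for $\cG_F$, so if $U_{\max}$ is a maximal totally isotropic subspace, \eqref{max_total_isot} gives
\[
r-|\sig\cG_F|+n(K,F)=2\dim U_{\max}\ge 2\dim(U\otimes\RR)\ge r-2g(S),
\]
which rearranges to $|\si(K,F)|=|\sig\cG_F|\le 2g(S)+n(K,F)$, and in particular $\si(K,F)\le 2g(S)$ when $n(K,F)=0$. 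Note that the case $g(S)=0$ recovers \Cref{slice}.

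I expect the Mayer-Vietoris count and the linking argument to be routine, being minor variants of what already appears in \Cref{kernel} and \Cref{slice}. The main obstacle is the nonorientable-$V$ case of the homological lemma: rational Poincar\'e--Lefschetz duality fails for nonorientable $V$, so the clean $\QQ$ half-lives-half-dies bound must be replaced by its $\ZZ/2$ counterpart and then promoted to the statement about $H_1(F;\ZZ)\to H_1(V;\QQ)$, handled just as in the corresponding part of \Cref{kernel}. A secondary point to check, as in \Cref{slice}, is that the self-intersections of $E$ are concentrated along $F$, so that $e(E)=-\lk(K,K')=e(F,K)=0$, consistently with the application of \Cref{boundary}.
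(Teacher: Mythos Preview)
Your proposal is correct and follows essentially the same route as the paper: invoke \Cref{boundary} to get $V$, adapt \Cref{kernel} via Mayer--Vietoris and half-lives-half-dies to find a subgroup of $H_1(F;\ZZ)$ of rank at least $\tfrac{r}{2}-g(S)$ mapping trivially to $H_1(V;\QQ)$, observe it is totally isotropic for $\cG_F$ by the linking argument of \Cref{slice}, and conclude via \eqref{max_total_isot}. You are, if anything, more careful than the paper---explicitly arranging $E$ to be nonorientable before applying \Cref{boundary}, and handling the trivial case $g(S)\geq r/2$ uniformly rather than as a separate case.
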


\begin{proof}
Let $k= g(S)$. Since $E=F \cup S$ bounds a 3-manifold, it follows that $H_1(F; \ZZ)$ has even rank, say $2h$. Furthermore, since $S$ is orientable, $F$ has normal Euler number $e(F)=e(E)=0.$
In the case $k \geq h$, then the conclusion follows trivially from the fact that $|\si(K,F)|=|\sig \cG_F|\leq 2h.$ Therefore, we can assume that $k<h.$

We can repeat the proof of \Cref{kernel}, to show that there is a generating set $\{\ga_1, \ldots, \ga_{2h}\}$ for $H_1(F; \ZZ)$
such that $\ga_1, \ldots, \ga_{h-k}$ lie in the kernel of the map $H_1(F;\ZZ) \to H_1(V;\QQ).$

For a symmetric bilinear form with rank $r$, signature $\si$, and nullity $n$, by \Cref{max_total_isot}, any totally isotropic subspace has rank at most  $(r-|\si|+n)/2$. Hence
$$h-k\leq (2h-|\sig \cG_{F}|+n(K,F))/2,$$
and the conclusion follows.
\end{proof}

\begin{remark}
\Cref{thm-sign} is easiest to apply in case $W$ is a handlebody. 

Let $K \subset \Si \times I$ be a knot with spanning surface $F$, and $S \subset  W\times I$ is an orientable surface of genus $g$. If $W$ is a handlebody, then by \Cref{thm-surface-bound}, there is a spanning surface $F'$ that is $S^*$-equivalent to $F$ such that $E' = F' \cup S$  bounds a 3-manifold $V$. In particular, since $F$ and $F'$ are $S^*$-equivalent, $\si(K,F) =\si(K,F)$ and $n(K,F) =n(K,F')$. Applying \Cref{thm-sign} to $F'$, it follows that 
$$|\si(K,F)| \leq 2g(S) + n(K,F).$$
\end{remark}

In order to state the next result, we need to review some terminology.

\begin{definition} \label{defn:123}
Let $D$ be a link diagram on $\Si$. 
\begin{itemize}
\item[(i)] $D$ is \textit{alternating} if its crossings alternate between over- and under-crossings along every component of $D$, 
\item[(ii)] $D$ is \textit{cellularly embedded} if $\Si \sm D$ is a union of disks, and 
\item[(iii)] $D$ is  \textit{checkerboard colorable} if the regions of $\Si \sm D$ can be colored by two colors so that no two regions sharing an edge have the same color. 
\end{itemize}
\end{definition}

Properties (ii) and (iii) of \Cref{defn:123} are clearly unaffected by crossing changes on $D$. Kamada showed that a cellularly embedded diagram is checkerboard colorable if and only if it can be made alternating by crossing changes (see \cite[Lemma 7]{Kamada-02}).

\begin{theorem} 
Let $K \subset \Si \times I$ be a knot in a thickened surface represented by a cellularly embedded, alternating diagram. Let $F$ and $F^*$ be the two checkerboard surfaces associated to the coloring. Then $\det(K,F)\neq 0$ and $\det(K,F^*)\neq 0$. Further, $|\si(K,F)-\si(K,F^*)| =2g,$ where $g$ is the genus of $\Si.$ In particular, if $\Si$ has genus $g\geq 1,$ then $K$ is not slice.
\end{theorem}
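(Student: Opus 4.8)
The plan is to reduce the statement to two inputs: the classical fact that an alternating diagram has \emph{definite} checkerboard Gordon--Litherland forms of opposite sign, and an Euler characteristic count on $\Si$. Fix a cellularly embedded alternating diagram of $K$, write $c$ for its number of crossings, and write $w$ and $b$ for the numbers of white and black regions of the checkerboard coloring.

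First I would establish definiteness. At each crossing of an alternating diagram the local contribution to the Gordon--Litherland form has a sign determined by the coloring, and the two colorings contribute opposite signs; this is the argument of Gordon and Litherland, and it carries over to the thickened-surface setting (cf.\ \cite{Boden-Karimi-2020}). Hence $\cG_F$ is negative definite and $\cG_{F^*}$ positive definite, after interchanging $F$ and $F^*$ if necessary. This immediately gives $\det(K,F)=|\det\cG_F|\neq 0$ and $\det(K,F^*)\neq 0$, and also $n(K,F)=n(K,F^*)=0$, together with $\sig(\cG_F)=-\dim_{\QQ}H_1(F;\QQ)$ and $\sig(\cG_{F^*})=+\dim_{\QQ}H_1(F^*;\QQ)$.

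Next I would compute the signature difference. Since $F$ is assembled from $w$ disks and $c$ twisted bands, $\chi(F)=w-c$, and as $K$ is a knot $F$ has a single boundary circle, so $H_1(F;\ZZ)$ is free of rank $1-\chi(F)=1-w+c$; likewise $\dim_{\QQ}H_1(F^*;\QQ)=1-b+c$. Cellular embedding makes Euler's formula for $\Si$ read $c-2c+(w+b)=2-2g$, i.e.\ $w+b=c+2-2g$, so
\[
\sig(\cG_F)-\sig(\cG_{F^*})=-(1-w+c)-(1-b+c)=-(c+2g).
\]
For the Euler-number terms, note that $e(F,K)=e(F)$ and $e(F^*,K)=e(F^*)$ for a knot, and that $e(F)-e(F^*)$ is a sum of one local contribution per crossing --- each equal to $\pm2$ for an alternating diagram, all of the same sign --- hence equal to $\pm2c$ independently of the genus. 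Comparing with the classical case $g=0$, where $\si(K,F)=\si(K,F^*)=\sigma(K)$ by Gordon--Litherland, forces $\tfrac12\bigl(e(F)-e(F^*)\bigr)=c$ in the convention where $\cG_F$ is the negative definite form. Adding up,
\[
\si(K,F)-\si(K,F^*)=\bigl(\sig(\cG_F)-\sig(\cG_{F^*})\bigr)+\tfrac12\bigl(e(F)-e(F^*)\bigr)=-(c+2g)+c=-2g,
\]
so $|\si(K,F)-\si(K,F^*)|=2g$. For the final assertion, suppose $K$ were slice. Since $\si$ and $n$ depend only on $S^*$-equivalence, I may replace $F$ and $F^*$ by $S^*$-equivalent surfaces with vanishing normal Euler number (adding half-twisted bands), changing none of $\si(K,F),\si(K,F^*),n(K,F),n(K,F^*)$; then \Cref{slice} gives $|\si(K,F)|\leq n(K,F)=0$ and $|\si(K,F^*)|\leq n(K,F^*)=0$, hence $2g=|\si(K,F)-\si(K,F^*)|=0$, so $g=0$.

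The main obstacle is the first step: upgrading semidefiniteness to definiteness for a cellularly embedded alternating diagram on a surface of positive genus. The usual argument only sees the ``planar part'' of the Gordon--Litherland form, of dimension $b-1$, whereas here $\dim_{\QQ}H_1(F;\QQ)=(b-1)+2g$, and one must check that the extra $2g$ classes carried by the genus of $\Si$ do not lie in the radical --- equivalently, that the alternating condition still forces the entire form to be definite. A secondary fussy point is fixing the sign of $e(F)-e(F^*)$ relative to which checkerboard surface carries the negative definite form, which the reduction to the classical case above is designed to finesse.
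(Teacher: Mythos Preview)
Your proof is correct and follows the paper's strategy: invoke \cite{Boden-Karimi-2020} for definiteness of the checkerboard Gordon--Litherland forms (yielding $\det\neq 0$ and $n=0$), establish $|\si(K,F)-\si(K,F^*)|=2g$, and then apply \Cref{slice} after adjusting to $e(F)=0$ by $S^*$-equivalence. The only difference is that the paper delegates the signature-difference step to \cite[Theorem~19]{Boden-Karimi-2020}, whereas you carry out the Euler-characteristic count and the $e(F)-e(F^*)=\pm 2c$ calibration explicitly; your acknowledged worry about semidefiniteness versus full definiteness on the extra $2g$ classes is exactly what \cite[Lemma~6]{Boden-Karimi-2020} settles.
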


\begin{proof}
Using Lemma 6 
of \cite{Boden-Karimi-2020}, we see that $K$ admits a positive definite and negative definite spanning surface. Since there are only two $S^*$-equivalence classes of surfaces, it follows that one of the definite spanning surfaces is $S^*$-equivalent to $F$ and the other is $S^*$-equivalent to $F^*$. In particular, since $\det(K,F)$ depends only on the $S^*$-equivalence class of $F$, it follows that $\det(K,F)\neq 0$ and $\det(K,F^*)\neq 0$.

Arguing as in the proof of Theorem 19 
of \cite{Boden-Karimi-2020}, we see that $|\si(K,F)-\si(K,F^*)| =2g$. We then apply \Cref{cor-slice} to conclude that $K$ is not slice.
\end{proof}


\section{Brown Invariants} \label{sec4} 
In this section, we recall the Brown invariant of a proper quadratic enhancement of a finite dimensional $\ZZ/2$ inner product space, following \cite{Brown, Kharlamov-Viro}. Using it, one can define invariants for $\ZZ/2$ null-homologous links $L\subset \Si \times I$ together with a choice of spanning surface $F\subset \Si \times I$. The invariants are called Brown invariants; they are denoted $\Br_F(L)$ and take values in $\ZZ/8 \cup \{ \infty \}$. They have been studied previously by several different authors \cite{Matsumoto, Kirby-Taylor, Gilmer-1993, Kirby-Melvin, Klug-2020}.  

We outline two methods for defining and computing the Brown invariants of links in thickened surfaces. The first involves the Gordon-Litherland pairing and the second Goeritz matrices. We prove a duality theorem that relates the two approaches. We also prove a vanishing result for $\Br_F(K)$ for $\ZZ/2$ null-homologous knots $K \subset \Si \times I$ which are slice. The Brown invariants are seen to change sign under taking vertical or horizontal mirror image,
and they specialize to the Arf invariants when the link is $\ZZ$ null-homologous.  
\subsection{Quadratic enhancements and the Brown invariant} \label{section-Brown}
Let $V$ be a finite-dimensional vector space over $\ZZ/2$ and with a possibly singular symmetric bilinear form $\bigcdot.$ Even though it may be singular, we refer to $\bigcdot$ as an inner product and call $(V,\, \bigcdot\,)$ an inner product space.

A \textit{$\ZZ/4$-valued quadratic enhancement} of $(V,\bigcdot\,)$ is a map $\varphi \co V\to\ZZ/4$ such that
\begin{equation} \label{eqn:enhancement}
\varphi(u+v)=\varphi(u)+\varphi(v)+j(u\bigcdot v),
\end{equation}
for all $u,v \in V$, where $j\co\ZZ/2\to \ZZ/4$ is the monomorphism with $j(1) \equiv 2$ (mod 4). We call $(V,\bigcdot\, ,\varphi)$ a $\ZZ/4$ \textit{enhanced space}. 

\Cref{eqn:enhancement} implies that $\varphi(0)=0$ and that $\varphi(v) \equiv v\bigcdot v $ (mod 2) for all $v \in V$. The inner product is said to be \textit{even} if $v\bigcdot v=0$ for all $v \in V.$ In that case, \eqref{eqn:enhancement} implies that $\varphi(v)$ is even for all $v \in V$. When $\varphi$ is even, we can define an ordinary $\ZZ/2$-valued quadratic enhancement $q \co V \to \ZZ/2$ by setting $q(v)= \varphi(v)/2$ for all $v \in V.$ By \eqref{eqn:enhancement}, the ordinary quadratic form $q$ satisfies $q(u+v)=q(u)+q(v)+u\bigcdot v$ for all $u,v \in V.$ 

The \textit{radical} of $(V,\,\bigcdot\,)$ is the subspace
$$R=\{u\in V \mid u\bigcdot v=0,\ \text{for\ every}\ v\in V\}.$$
The form $\varphi$ is said to be \textit{non-singular} if $R=0$, and \textit{proper} if $\varphi|_R =0.$

Given a non-singular enhanced space $(V,\bigcdot,\varphi)$, consider the \textit{Monsky sum} 
\begin{equation} \label{eqn:Monsky}
\la(\varphi)=\sum_{v \in V}  i^{\varphi(v)}.
\end{equation}
By \cite[Lemma 3.2]{Brown}, it follows that $\la$ is multiplicative under orthogonal sum (i.e., $\la(\varphi_1 \oplus \varphi_2) = \la(\varphi_1) \la(\varphi_2)$), and by  \cite[Lemma 3.3]{Brown}, we see that $\la(\varphi)^8$ is positive and real provided that $\varphi$ is non-singular. Therefore, it follows that  
\begin{equation} \label{eqn:Brown}
\la(\varphi) =  (\sqrt{2} )^{\dim V} e^{\Br(\varphi) {\pi i} /4}
\end{equation} 
for some integer $\Br(\varphi)$ well-defined modulo 8. The element  $\Br(\varphi) \in \ZZ/8$ is called the \textit{Brown invariant}. 

If $\varphi$ is singular and proper, then it induces a non-singular quadratic enhancement $\wbar{\varphi}$ on $V/R$, and we define $\Br(\varphi)=\Br(\wbar{\varphi}).$ If  $\varphi$ is not proper, then we set $\Br(\varphi)=\infty$ and say that the Brown invariant is undefined. 

Observe that the Monsky sum \eqref{eqn:Monsky} is well-defined even when $\varphi$ is singular. In fact, if $\varphi$ is not proper, then $\la(\varphi) =0$. If $\varphi$ is singular and proper, then
\begin{equation} \label{eqn:Monsky2}
\la(\varphi) =  (\sqrt{2} )^{(\dim V+\dim R)} e^{\Br(\varphi) {\pi i} /4}.
\end{equation}  
In particular, $\varphi$ is proper if and only if $\la(\varphi) \neq 0.$

For a fixed inner product space $(V,\,\bigcdot \,),$ the map
$$\Br\co\{\text{enhanced\ spaces}\}\lto\ZZ/8\cup\{\infty\},$$
provides a complete isomorphism invariant of quadratic enhancements of $(V,\,\bigcdot \,)$, (see Section 3.1 in \cite{Kirby-Melvin}).

If $\varphi\co V \to \ZZ/4$ is even, then the Monsky sum lies on the real line. If $\varphi$ is even and proper, then $\la(\varphi)=\pm(\sqrt{2})^{(\dim V + \dim R)}$ and $\Br(\varphi) \in \{0,4\}$. In particular, $\Br(\varphi)=4\Arf(q),$ the (usual) Arf invariant of the ordinary quadratic form $q$ associated to $\varphi$. Thus, the Brown invariant specializes to the Arf invariant when $\varphi$ is even. 

\subsection{Graphical representation of quadratic enhancements}

In \cite{Gilmer-1993}, Gilmer uses a simple $\ZZ/4$-weighted graph to represent
$(V,\bigcdot\, ,\varphi)$. Let $\{e_1,\ldots, e_n\}$ be a basis for $V$. 
The graph has a vertex for each basis element $e_i$ and an edge between two vertices $e_i$ and $e_j$ ($i \neq j$) whenever $e_i \bigcdot e_j =1$.  The vertex $e_i$ has weight $\varphi(e_i) \in \ZZ/4$.

By \eqref{eqn:enhancement}, $\varphi$ is completely determined by the values $e_i \bigcdot e_j \in \ZZ/2$ and $\varphi(e_i) \in \ZZ/4$ for $i,j=1,\ldots, n.$ Thus, the simple weighted graph determines the form $\varphi$.

For example, the four indecomposable spaces denoted  $P_\pm, T_0, T_4$ in \cite[\S 4]{Matsumoto}
and \cite[\S 3.2]{Kirby-Melvin} are represented by the weighted graphs:
$P_\pm = \bullet^{\pm 1}$, $T_0 = \!\!
\begin{tikzpicture}
\draw (0.2,0) -- (0.8,0);
\filldraw (0.2,0) circle (2pt); 
\filldraw (0.8,0) circle (2pt); 
\draw[color=black] (0,0.12) node {\scriptsize $0$};
\draw[color=black] (1.0,0.12) node {\scriptsize  $0$};
\end{tikzpicture}$
and $ T_4= \!\!
\begin{tikzpicture}
\draw (0.2,0) -- (0.8,0);
\filldraw (0.2,0) circle (2pt); 
\filldraw (0.8,0) circle (2pt); 
\draw[color=black] (0,0.12) node {\scriptsize $2$};
\draw[color=black] (1.0,0.12) node {\scriptsize  $2$};
\end{tikzpicture}$.
Using this and the Monsky sums, it is elementary to show that
\begin{equation} \label{eqn:indecomp}
\Br(P_\pm)= \pm 1, \; \Br(T_0) = 0, \; \text{and} \; \Br(T_4) = 4. 
\end{equation}

Given a basis $\{e_1,\ldots, e_n\}$, one can also represent $(V,\bigcdot\, ,\varphi)$ using a special kind of matrix. It is a symmetric  $n \times n$ matrix with  diagonal entries $\varphi(e_1),\ldots, \varphi(e_n)$ and off-diagonal entries
$e_i \bigcdot e_j$ for $i \neq j$. Basically, it is just the adjacency matrix of the simple graph
with diagonal entries given by the weights. The matrix is special in that its diagonal entries lie in $\ZZ/4$ whereas its off-diagonal entries lie in $\ZZ/2$. We call a symmetric matrix obtained this way the \textit{special matrix representative} for $\varphi$ in the basis $\{e_1,\ldots, e_n\}.$

For example, the indecomposable spaces have special matrix representatives 
$$P_\pm = [\pm 1], \;
T_0 = \begin{bmatrix} 0 & 1 \\ * & 0 \end{bmatrix}, \;
\text{ and } \; T_4 = \begin{bmatrix} 2 & 1 \\ * & 2 \end{bmatrix}.$$

The Brown invariants are additive under orthogonal sum. Orthogonal sum is represented by disjoint union of the graphs or by diagonal block sum of special matrices. Every non-singular enhanced space is the orthogonal sum of these four indecomposable spaces. Thus, \eqref{eqn:indecomp} can be used to effectively compute the Brown invariant on any non-singular enhanced space. 

In \cite{Gilmer-1993}, Gilmer describes a graphical calculus for performing such computations. His method extends to singular forms and determines when the form is proper. For instance, the 1-dimensional form represented by $\bullet^2$ has Monsky sum $1+i^2=0$, therefore, it is not proper.
Likewise,
the 2-dimensional form represented by the barbell graph  
\begin{tikzpicture}
\draw (0.2,0) -- (0.8,0);
\filldraw (0.2,0) circle (2pt); 
\filldraw (0.8,0) circle (2pt); 
\draw[color=black] (0,0.12) node {\scriptsize $k$};
\draw[color=black] (1.0,0.12) node {\scriptsize  $\ell$};
\end{tikzpicture}
has Monsky sum 
$\la = 1+i^k + i^\ell + i^{k+\ell+2}$.
It is proper if and only if $\la \neq 0$.
In particular, \!\! \begin{tikzpicture}
\draw (0.2,0) -- (0.8,0);
\filldraw (0.2,0) circle (2pt); 
\filldraw (0.8,0) circle (2pt); 
\draw[color=black] (-0.1,0.12) node {\scriptsize $-1$};
\draw[color=black] (1.0,0.12) node {\scriptsize  $1$};
\end{tikzpicture} represents an improper form.

\subsection{Brown invariants of links via the Gordon-Litherland pairing}\label{section-link}
In this section, we give the first definition of the Brown invariants for oriented links in thickened surfaces.

We start with some basic results about quadratic enhancements in terms of their special matrix representatives.

Let $U$ be a free abelian group, and let $B \co U \times U \to \ZZ$ be a symmetric bilinear pairing.
Let $V$ be the mod 2 reduction of $U$. For $[u],[v]\in V$, define $[u]\bigcdot [v]\equiv B(u,v)$ (mod 2), where the $u,v\in U$ have mod 2 reductions $[u],[v],$ respectively. Define $\varphi \co V \to \ZZ/4$ by setting $\varphi([u])\equiv B(u,u)$ (mod 4).

\begin{lemma}\label{well-defined}
The map $\varphi$ is well-defined and $(V,\bigcdot\, ,\varphi)$ is a $\ZZ/4$ enhanced space. 
\end{lemma}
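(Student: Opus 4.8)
The plan is to verify the two claims of Lemma~\ref{well-defined} directly from the definitions, the only subtlety being the well-definedness of $\varphi$, since $\bigcdot$ is manifestly well-defined (it is the mod~$2$ reduction of the bilinear form $B$, which descends to $U/2U = V$ because $B(2x,v) = 2B(x,v) \equiv 0 \pmod 2$).

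First I would check that $\varphi$ is well-defined. Suppose $u, u' \in U$ both reduce to $[u] \in V$, so $u' = u + 2x$ for some $x \in U$. Then, using bilinearity and symmetry of $B$,
\begin{equation*}
B(u', u') = B(u + 2x, u + 2x) = B(u,u) + 4B(u,x) + 4B(x,x) \equiv B(u,u) \pmod 4,
\end{equation*}
so $\varphi([u]) \equiv B(u,u) \pmod 4$ does not depend on the chosen lift. Hence $\varphi \co V \to \ZZ/4$ is a well-defined map.

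Next I would verify the enhancement identity \eqref{eqn:enhancement}. Given $[u],[v] \in V$ with lifts $u, v \in U$, the element $[u]+[v]$ lifts to $u+v$, and again by bilinearity and symmetry of $B$,
\begin{equation*}
B(u+v, u+v) = B(u,u) + B(v,v) + 2B(u,v).
\end{equation*}
Reducing mod~$4$ gives $\varphi([u]+[v]) \equiv \varphi([u]) + \varphi([v]) + 2B(u,v) \pmod 4$. Since $j \co \ZZ/2 \to \ZZ/4$ is the monomorphism with $j(1) \equiv 2$, and since $2B(u,v) \pmod 4$ depends only on $B(u,v) \pmod 2 = [u]\bigcdot[v]$, we have $2B(u,v) \equiv j([u]\bigcdot[v]) \pmod 4$, which is exactly \eqref{eqn:enhancement}. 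Therefore $(V, \bigcdot\,, \varphi)$ is a $\ZZ/4$ enhanced space.

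This argument is essentially a routine computation; there is no real obstacle. The only point that requires a moment's care is making sure the mod~$4$ reduction absorbs the cross term correctly — namely that the term $2B(u,v)$ appearing in the expansion reduces to $j$ of the mod~$2$ pairing rather than to something depending on more of $B(u,v)$ — and this is immediate because $2n \pmod 4$ is determined by $n \pmod 2$. One could also note that $\varphi$ being well-defined already forces the ambient consistency: the congruence $\varphi(v) \equiv v \bigcdot v \pmod 2$ predicted after \eqref{eqn:enhancement} holds here by construction, since $B(u,u) \pmod 2 = [u]\bigcdot[u]$.
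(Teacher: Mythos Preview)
Your proof is correct and follows essentially the same approach as the paper: both expand $B(u+2x,u+2x)$ to show well-definedness and $B(u+v,u+v)$ to verify the enhancement identity. Your version is slightly more explicit about why $2B(u,v)\pmod 4$ equals $j([u]\bigcdot[v])$ and why $\bigcdot$ itself is well-defined, but the argument is otherwise identical.
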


\begin{proof}
If $u'\in U$ is another element with $[u']=[u]$, then $u'=u+2v$ for some $v\in U$. It follows that
\begin{equation*}
\begin{split}
\varphi([u'])&\equiv B(u+2v,u+2v) \text{ (mod $4$)},\\
&=B(u,u)+ B(u,2v)+B(2v,u)+B(2v,2v),\\
               &= B(u,u)+4B(u,v)+ 4B(v,v),\\
               &\equiv \varphi([u]) \text{ (mod $4$)}.
\end{split}
\end{equation*}
This shows that $\varphi$ is well-defined. 

Next, we show that $\varphi$ is a $\ZZ/4$ quadratic form $(V,\bigcdot\,)$.
If $[u],[v]\in V$, let $u,v \in U$ be chosen so that they map to $[u],[v]$ under reduction mod 2. Then
\begin{equation*}
\begin{split}
\varphi([u]+[v]) &\equiv B(u+v,u+v) \text{ (mod $4$),}\\
&= B(u,u)+B(v,v) +2B(u,v),\\
&\equiv \varphi([u])+\varphi([v])+2(u\bigcdot v) \text{ (mod $4$)}.
\end{split}
\end{equation*}
The claim now follows.
\end{proof}

Given a symmetric integral $n\times n$ matrix $M$, 
let $\wbar{M}$ be the matrix obtained from $M$ by reducing the diagonal entries mod 4 and the off-diagonal entries mod 2. 

If $U$ is a free abelian group with basis $\{e_1,\ldots, e_n\}$, then the matrix $M$ determines a bilinear pairing $B\co U \times U \to \ZZ$ by setting $B(e_i,e_j) = M_{ij}$.    

Let $V$ be the mod 2 reduction of $U$, and let $\bigcdot$ be the inner product on $V$ determined by $B$.
Let $\varphi_M$ denote the $\ZZ/4$ quadratic form on $(V, \bigcdot\, )$ defined by $B$ as above.
Notice that the special matrix representative of $\varphi_M$ is given by $\wbar{M}$.  

\begin{lemma}\label{unimodular}
If $M,M'$ are two symmetric integral $n\times n$ matrices that are unimodular congruent, then $\Br(\varphi_{M})=\Br(\varphi_{M'}).$
\end{lemma}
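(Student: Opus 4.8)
The plan is to reduce the statement to the single generator of unimodular congruence, i.e.\ the case $M' = P^{\tr} M P$ where $P$ is either an elementary matrix adding an integer multiple of one basis vector to another, or a permutation/sign matrix. Since $\Br(\varphi_M)$ depends only on the isomorphism class of the enhanced space $(V,\bigcdot\,,\varphi_M)$, and a change of basis of $U$ induces a change of basis of the mod~$2$ reduction $V$, it is enough to observe that unimodular congruent integral matrices $M,M'$ give rise to isomorphic enhanced spaces. Concretely, if $M' = P^{\tr} M P$ with $P \in GL_n(\ZZ)$, then $P$ induces an automorphism $\bar P$ of $V$, and one checks directly from the definitions that $\bar P$ carries the inner product and the quadratic form defined by $M'$ to those defined by $M$: for the bilinear part this is immediate, and for $\varphi$ one uses that $\varphi_{M'}([e_i]) \equiv M'_{ii} = (P^{\tr} M P)_{ii} \pmod 4$ equals $B(Pe_i, Pe_i) \bmod 4 = \varphi_M([Pe_i])$, exactly the computation already performed in \Cref{well-defined}.

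First I would restate the goal as: the assignment $M \mapsto (V,\bigcdot\,,\varphi_M)$ is constant on unimodular congruence classes, where $V = U \otimes \ZZ/2$. Then I would write $M' = P^{\tr} M P$ for some $P \in GL_n(\ZZ)$ and let $\bar P \co V \to V$ be its reduction mod~$2$, which is an isomorphism since $\det P = \pm 1$ is a unit mod~$2$. Next I would verify that $\bar P$ intertwines the two structures: letting $\bigcdot$ and $\bigcdot'$ (resp.\ $\varphi_M$ and $\varphi_{M'}$) be the forms associated to $M$ and $M'$, one has $[u] \bigcdot' [v] \equiv (P^{\tr}MP)(u,v) = (Pu)^{\tr} M (Pv) \equiv (\bar P[u]) \bigcdot (\bar P[v]) \pmod 2$, and similarly $\varphi_{M'}([u]) \equiv u^{\tr}(P^{\tr}MP)u = (Pu)^{\tr} M (Pu) \equiv \varphi_M(\bar P[u]) \pmod 4$; the mod~$4$ statement being exactly the kind of expansion done in \Cref{well-defined}, where the cross terms appear with even coefficients. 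Hence $\bar P$ is an isomorphism of $\ZZ/4$ enhanced spaces, so $\Br(\varphi_{M'}) = \Br(\varphi_M)$ by the fact (cited from \cite{Kirby-Melvin}) that $\Br$ is a complete isomorphism invariant of enhanced spaces.

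I do not expect a serious obstacle here; the only point requiring a little care is that ``unimodular congruence'' is a congruence over $\ZZ$ (not merely over $\ZZ/2$ or $\ZZ/4$), so that the integer identity $M' = P^{\tr} M P$ holds on the nose and can be reduced modulo $2$ and modulo $4$ separately to recover the bilinear and quadratic data; this is what makes the diagonal entries transform correctly mod~$4$ rather than only mod~$2$. It is also worth noting explicitly that $\varphi$ and $\bigcdot$ are genuinely functions of the enhanced space and not of the chosen basis, so that once the isomorphism $\bar P$ is produced, invariance of $\Br$ is automatic and no further bookkeeping with the special matrix representatives $\wbar M$, $\wbar{M'}$ is needed — although one may remark that $\wbar{M'}$ is precisely the special matrix representative of $\varphi_{M'}$ in the transported basis, which makes the statement visually transparent.
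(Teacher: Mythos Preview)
Your proposal is correct and follows essentially the same route as the paper: write $M' = P^{\tr}MP$ with $P \in GL_n(\ZZ)$, reduce $P$ mod~$2$ to get an automorphism $\Phi$ of $V$, and verify directly that $\varphi_{M'} = \varphi_M \circ \Phi$, so the enhanced spaces are isomorphic and have equal Brown invariant (the paper cites \cite[Theorem 1.20 (i)]{Brown} rather than \cite{Kirby-Melvin}). Your opening remark about reducing to elementary generators is unnecessary and you rightly drop it in favor of handling a general $P$ at once.
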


\begin{proof}
Fix a basis $\{e_1,\ldots, e_n\}$ for $U$. For an element $u\in U$, by abuse of notation, denote the coordinate of $u$ with respect to the basis (an $n\times 1$ column) also by $u$.

Let $B,B'$ be the symmetric bilinear forms on $U$ determined by $M,M',$ respectively. 
Since they are unimodular congruent, we have $M'=P^{\tr}MP$ for some unimodular matrix $P$. 
Then
$$B'(u,v)=u^{\tr}M' v=u^{\tr}P^{\tr}M P v=B( Pu,Pv).$$
It follows that
$$\varphi_{M'}([u])= \varphi_{M}([Pu])=\varphi_{M}(\Phi[u]), $$
where $\Phi\co V\to V$ is given by $\Phi([u])=[Pu]$. Since $P$ is unimodular, $\Phi$ is an isomorphism. Thus $\varphi_{M'}=\varphi_{M} \circ \Phi$, and by \cite[Definition 1.19]{Brown}, we see that $\varphi_{M}$ and $\varphi_{M'}$ are isomorphic. The result now follows from \cite[Theorem 1.20, (i)]{Brown}.
\end{proof}

Let $L\subset \Si\times I$ be an unoriented link and $F \subset \Si \times I$ a spanning surface for $L$.
Take $V_F=H_1(F;\ZZ/2)$ with  symmetric bilinear form given by the mod 2 Gordon-Litherland pairing.

Define $\varphi_F \co V_F \to \ZZ/4$ by setting $\varphi_F([a])=\cG_{F}(a,a)$.  Then by \Cref{well-defined}, it follows that $\varphi_F$ is well-defined and 
that $(V_F, \cG_F, \varphi_F)$ is a $\ZZ/4$ enhanced space.

We denote by $\Br(\varphi_F)$ the Brown invariant of $(V_F, \cG_F,\varphi_F)$. 

\begin{definition}\label{defn:Brown-link}
The Brown invariant of an oriented link $L \subset \Si \times I$ with spanning surface $F \subset \Si \times I$ is denoted $\Br_F(L)$ and defined by the formula 
$$\Br_F(L)=\Br(\varphi_F)+\tfrac{1}{2}e(F,L).$$
\end{definition}
Note that $\Br_F(L) \in \ZZ/8 \cup \{\infty\}$, and it depends on the choice of spanning surface $F$. It depends on the orientation of $L$ only through the Euler class $e(F,L) = e(F) -\la(L)$.
The next result shows that $\Br_F(L)$ is invariant under $S^*$-equivalence of $F$.

\begin{proposition} \label{prop:klug}
Let $L\subset \Si\times I$ be a link with spanning surface $F$. Then $\Br_F(L)$ depends only on the $S^*$-equivalence class of $F$. 
\end{proposition}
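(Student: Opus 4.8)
The strategy is to recall the three moves generating $S^*$-equivalence from \Cref{equivalence} and to check that $\Br_F(L)=\Br(\varphi_F)+\tfrac12 e(F,L)$ is unchanged under each. Ambient isotopy changes nothing, so there are only two substantive cases: attaching a tube and attaching a half-twisted band. In both cases I would track how $H_1(F;\ZZ/2)$, the Gordon-Litherland pairing $\cG_F$, the enhancement $\varphi_F$, and the Euler number $e(F,L)$ change, and then invoke the behavior of the Brown invariant under orthogonal sum together with the computations \eqref{eqn:indecomp} of the indecomposable spaces $P_\pm$, $T_0$, $T_4$.

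\textbf{Tube case.} Attaching a tube to $F$ adds a genus, so $H_1(F';\ZZ/2) \cong H_1(F;\ZZ/2) \oplus \ZZ/2\langle a \rangle \oplus \ZZ/2\langle b\rangle$, where $a$ is a meridian of the tube (bounding a disk in $\Si\times I$, hence $\cG_F(a,\cdot)=0$ and $\varphi_F(a)=0$) and $b$ runs through the tube with $\cG_F(a,b)=1$ and $\cG_F(b,b)$ an even integer controlled by the framing; one also has $\cG_F(a,x)=0$ and, after adjusting $b$ by elements of the old $H_1$, $\cG_F(b,x)=0$ for all $x\in H_1(F;\ZZ/2)$. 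Thus the new summand is isometric to $T_0$ or $T_4$ as a $\ZZ/4$ enhanced space, and $\Br(\varphi_{F'})=\Br(\varphi_F)+\Br(T_0 \text{ or }T_4)=\Br(\varphi_F)$ since $\Br(T_0)=0$ and $\Br(T_4)=4$; I need to argue that the $\varphi_F(b)\equiv 2$ case (which would give $T_4$) either cannot occur or is accompanied by a compensating change — here the cleanest route is to note that a tube can always be slid/framed so that $\varphi_F(b)=0$, or else to pair the analysis with the Euler-number bookkeeping below. A tube does not change $e(F,L)$ (the meridian $a$ is null-homologous and $b$ can be taken with zero self-linking), so $\tfrac12 e(F,L)$ is unchanged and $\Br_F(L)$ is preserved.

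\textbf{Half-twisted band case.} Attaching a half-twisted band adds a single generator $c$ to $H_1(F;\ZZ/2)$ with $\cG_F(c,c)=\pm 1$ (the sign being that of the twist), and $\cG_F(c,x)=0$ for $x$ in the old $H_1$ after a change of basis, so the new orthogonal summand is $P_{\pm}$ with $\Br(P_\pm)=\pm 1$. This shifts $\Br(\varphi_{F'})$ by $\pm 1$, but it also changes the Euler number: a half-twisted band alters $e(F,L)$ by $\mp 2$, so $\tfrac12 e(F,L)$ changes by $\mp 1$, and the two contributions cancel in $\Br_F(L)=\Br(\varphi_F)+\tfrac12 e(F,L)$. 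This exact cancellation is the point of including the $\tfrac12 e(F,L)$ correction term in \Cref{defn:Brown-link}, mirroring the analogous correction in the signature $\si(L,F)=\sig(\cG_F)+\tfrac12 e(F,L)$, and it is also why the signature itself is an $S^*$-invariant.

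\textbf{Main obstacle.} The delicate point is getting the signs and the mod-$8$ bookkeeping exactly right in the half-twisted band case — verifying that the half-twist contributes precisely $\pm 1$ to $\Br(\varphi_F)$ and precisely $\mp 2$ to $e(F,L)$, with consistent signs, so that the sum is genuinely invariant rather than invariant only up to $4$. A secondary subtlety is ensuring in the tube case that one may normalize the framing to avoid the $T_4$ summand (or, equivalently, that any $T_4$ that appears is cancelled), since $\Br(T_4)=4\neq 0$; this requires a careful choice of the curve $b$ through the tube. I would handle both by reducing to the explicit local models and appealing to the additivity of $\Br$ under orthogonal sum together with \eqref{eqn:indecomp}.
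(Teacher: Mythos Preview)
The paper does not give its own proof here; it simply refers the reader to Section~6 of \cite{Klug-2020}. So there is no in-paper argument to compare against.

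Your approach---checking invariance under each of the three generating moves of $S^*$-equivalence---is the natural direct route, and it is essentially correct. The half-twisted band case works as you describe: the new orthogonal summand is $P_\pm$ with $\Br(P_\pm)=\pm1$, while $e(F,L)$ shifts by $\mp 2$, so the sum $\Br(\varphi_F)+\tfrac12 e(F,L)$ is unchanged. This is precisely the same cancellation that makes $\si(L,F)=\sig(\cG_F)+\tfrac12 e(F,L)$ an $S^*$-invariant, so once the sign bookkeeping has been done for the signature it can be quoted verbatim here.

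The gap you flag in the tube case is real but closes cleanly, and not by the geometric ``slide/reframe the tube'' manoeuvre you suggest---the resolution is purely algebraic inside the enhanced space $(V_{F'},\,\bigcdot\,,\varphi_{F'})$. Since the meridian $a$ bounds a disk, $\varphi_{F'}(a)=0$ and $a\bigcdot x=0$ for every old class $x$, while $a\bigcdot b=1$. First split off the old part: for each old generator $x$ replace it by $x'=x+(b\bigcdot x)\,a$; then $x'\bigcdot b=0$, $x'\bigcdot a=0$, and \eqref{eqn:enhancement} together with $\varphi_{F'}(a)=0$ and $a\bigcdot x=0$ gives $\varphi_{F'}(x')=\varphi_{F'}(x)$. (Note that the correct adjustment is of the old classes by multiples of $a$, not of $b$ by old classes as you wrote.) This exhibits $\varphi_{F'}\cong \varphi_F\oplus \psi$ with $\psi$ supported on $\langle a,b\rangle$, $\psi(a)=0$, $a\bigcdot b=1$. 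Now the $T_4$ worry evaporates: by \eqref{eqn:enhancement}, $\psi(a+b)=\psi(a)+\psi(b)+2=\psi(b)+2$, so if $\{a,b\}$ presents $T_4$ then $\{a,a+b\}$ presents $T_0$. Either way $\Br(\psi)=0$, and since a tube does not change $e(F,L)$, the invariant $\Br_F(L)$ is preserved. With these two points filled in your argument is complete.
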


\noindent For a proof, see Section 6 of \cite{Klug-2020}.

\begin{figure}[htbp]
\begin{center}
\includegraphics[scale=0.90]{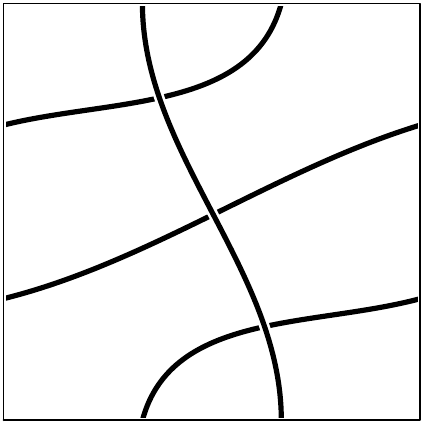}\hspace{.3cm}
\includegraphics[scale=0.90]{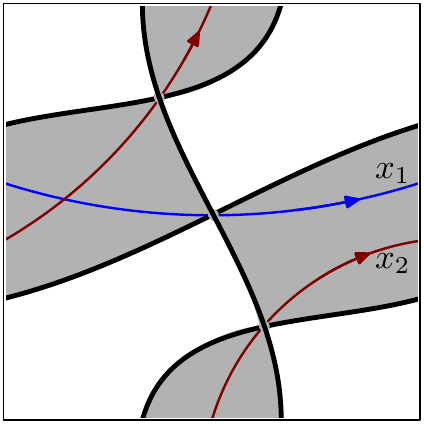}\hspace{.3cm}
\includegraphics[scale=0.90]{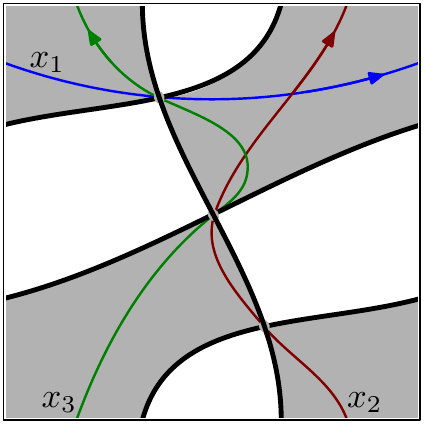}\hspace{.3cm}
\caption{\small A knot in the thickened torus, and two spanning surfaces for it.}
\label{fig-3-5-1}
\end{center}
\end{figure}

\begin{example}\label{ex-3-5-1}
\Cref{fig-3-5-1} shows a knot in the thickened torus with two spanning surfaces $F$ (middle) and $F^*$ (right). 

For the first surface, using the basis $\{x_1,x_2\}$ for $H_1(F;\ZZ)$, we compute that the Gordon-Litherland pairing $\cG_F$ is given by the diagonal matrix $[1]\oplus [-2]$.
Thus, we can represent $\varphi_F$ by the special matrix $[1]\oplus [2]$.
As previously noted, the form represented by $[2]$ is not proper, nor is any form containing $[2]$ as an orthogonal summand. 
Therefore,  $\Br(\varphi_F)=\infty,$ and $\Br_{F}(K)=\infty$ is undefined. 

For the second surface, using the basis $\{x_1,x_2,x_3\}$ for $H_1(F^*;\ZZ)$, we compute the Gordon-Litherland pairing $\cG_{F^*}$ and see that 
$\varphi_{F^*}$ is represented by the special matrix 
$$\begin{bmatrix} 1 & 0 & 1 \\  * & 0 & 1 \\ * & * & 0\end{bmatrix}.$$ 
A straightforward calculation reveals that 
$\la(\varphi_{F^*})= 2+2i =(\sqrt{2})^3e^{ {\pi i} /4}. $
Therefore, $\Br(\varphi_{F^*})=1$. Since $e(F^{*},K)=2$, it follows that $\Br_{F^*}(K)= 1+\frac{1}{2}e(F^{*},K)=2$.
\hfill $\Diamond$ \end{example}


\subsection{Brown invariants of links via Goeritz matrices}
In this section, we give a second way to define Brown invariants for
checkerboard colored link diagrams in a thickened surface. 

Let $D \subset \Si$ be a diagram for a checkerboard colorable link $L$, and let $\xi$ be a checkerboard coloring for the regions $\Si \sm D.$ We use $F_\xi$ to denote the checkerboard surface obtained from the black regions, so $F_\xi$ is a union of disks and bands, with one disk for each black region of $\Si \sm D$ and one half-twisted band for each crossing of $D$.

The \emph{Tait graph} is the associated graph in $\Si$; it has one vertex for each black region and one edge for each crossing of $D$. If $C_D$ is the set of all crossings of $D$, we define the incidence number $\eta_c = \pm 1$ for $c \in C_D$ according to \Cref{fig-crossing-eta}. We also label each of the edges of the Tait graph with the sign of its incidence number.

\begin{figure}[htbp]
\begin{minipage}{0.48\textwidth}
     \centering
     \includegraphics[scale=0.90]{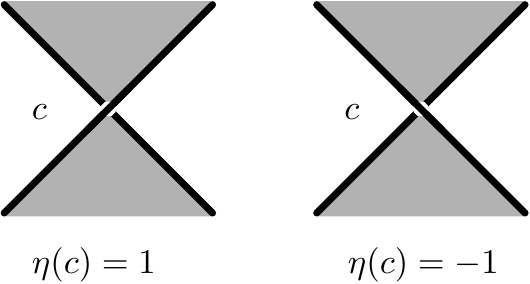}
     \vspace*{-0.2cm}
     \caption{\small Incidence number}\label{fig-crossing-eta}
     
   \end{minipage}
\begin{minipage}{0.48\textwidth}
     \centering
     \includegraphics[scale=0.90]{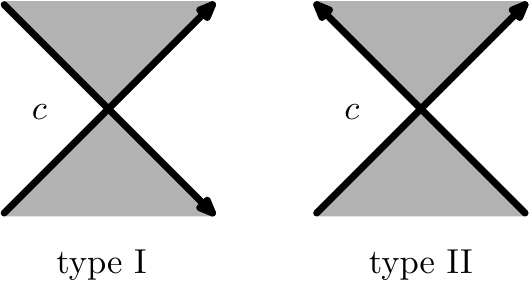}
      \vspace*{-0.2cm}
      \caption{\small Crossing type}\label{fig-crossing-type}
   \end{minipage}
\end{figure}

Let $X_0,X_1,\ldots, X_n$ be a numbering of the white regions  of $\Si \sm D$. Define an $(n+1) \times (n+1)$ matrix $G'_{\xi}(D)=(g_{ij})_{i,j=0,\ldots, n}$ by setting
$$g_{ij} = 
\begin{cases} 
-\sum  \eta_c & \text{ if $i \neq j$,} \\
-\sum_{k\neq i} {g_{ik}} & \text{ if $i = j$.}
\end{cases}$$
The first sum is taken over all crossings $c \in C_D$ incident to both $X_i$ and $X_j$. Then  $G'_{\xi}(D)$ is a symmetric matrix with integer entries and with $\det G'_{\xi}(D) =0.$

\begin{definition} \label{GL-pairing}
The Goeritz matrix $G_\xi(D)$ is the $n \times n$ matrix obtained by deleting the first row and column from $G'_{\xi}(D).$ In other words, $G_\xi(D) =(g_{ij})_{i,j=1,\ldots, n}.$  
\end{definition}

The Goeritz matrix $G_\xi(D)$ depends on the choice of checkerboard coloring $\xi$ and on the order of the white regions, as well as on the diagram $D$ used to represent the given link $L \subset \Si \times I$. In \cite{Im-Lee-Lee-2010}, Im, Lee and Lee show how to derive link invariants (signature, nullity, and determinant) from $G_\xi(D)$ for non-split checkerboard colorable links in thickened surfaces. Their invariants depend on the choice of checkerboard coloring, and they get two sets of invariants, one for each coloring.

In a similar way, one can define Brown invariants associated to the Goeritz matrix. 
Since $G_{\xi}(D)$ is  a symmetric integral matrix, we can define $\varphi_\xi \co V \to \ZZ/4$ to be the quadratic enhancement associated to it as in \Cref{unimodular}.
Here, we take $V = \cK_F = \Ker(H_1(F;\ZZ/2)\to H_1(\Si;\ZZ/2))$, where $F$ is the black checkerboard surface.
 
Let $\Br(\varphi_\xi)$ denote the Brown invariant associated to $\varphi_\xi$. Define the correction term
$$\mu_{\xi}(D) = \sum_{c \text{ type II}} \eta_c.$$
(Note that $\mu_\xi(D) = -\frac{1}{2} e(F,L)$ by Lemma 2.4 \cite{Boden-Chrisman-Karimi-2021}.) 
\begin{definition} \label{defn:Brown-link-Goeritz}
The Brown invariant of the oriented checkerboard colorable link diagram $D$ with coloring $\xi$ is given by
$$\Br_{\xi}(D)=\Br(\varphi_{\xi})-\mu_{\xi}(D).$$
\end{definition} 

The quantity $\Br_{\xi}(D)$ is an element of $\ZZ/8 \cup \{\infty\}$, and it depends on a choice of checkerboard coloring $\xi$. It  depends on the orientation of $D$ only through the correction term $\mu_{\xi}(D).$

At this point, one could show directly that $\Br_{\xi}(D)$ is invariant under the Reidemeister moves by mimicking the arguments of \cite{Im-Lee-Lee-2010} showing invariance of the signature, determinant, and nullity. We proceed with an indirect proof that relates the Brown invariants of \Cref{defn:Brown-link} to those of \Cref{defn:Brown-link-Goeritz}. By \Cref{prop:klug}, the former invariants are invariant under $S^*$-equivalence, therefore, it follows that $\Br_{\xi}(D)$ gives a well-defined link invariant depending only on the choice of checkerboard coloring.

\begin{figure}[htbp]
\begin{center}
\includegraphics[scale=0.90]{Figures/3-5-in-torus.pdf}\hspace{.3cm}
\includegraphics[scale=0.90]{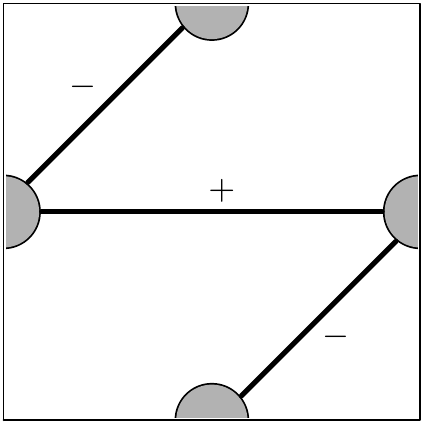}\hspace{.3cm}
\includegraphics[scale=0.90]{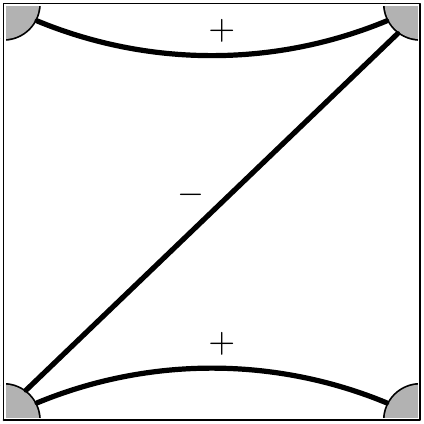}
\caption{\small A knot in the thickened torus and the two Tait graphs for its checkerboard surfaces.}
\label{fig-3-5}
\end{center}
\end{figure}

\begin{example}\label{ex-3-5-2}
\Cref{fig-3-5} shows the same knot as in the \Cref{ex-3-5-1}, along with the Tait graphs for $F$ (middle) and for $F^*$ (right). 

For the surface $F$, the associated Goeritz matrix is empty. Hence, $\Br(\varphi_{\xi})=0$. We have $\mu_{\xi}=-2$, and $\Br_{\xi}(K)=0-(-2)=2$.

For the surface $F^*$, the associated Goeritz matrix is $G_{\xi^*}(D)=[2]$, which we have seen represents an improper form. Thus, $\Br(\varphi_{\xi^*})=\infty$, and so $\Br_{\xi^*}(K)=\infty$ is undefined.
\hfill $\Diamond$ \end{example}


\subsection{Duality for Brown invariants}
In this section, we prove the chromatic duality theorems, which  are the analogues for Brown invariants of Theorems 4.1 and 5.4 of \cite{Boden-Chrisman-Karimi-2021}. We apply them to relate the two families of  Brown invariants. 

In \cite{Boden-Chrisman-Karimi-2021}, a similar approach is used to relate the invariants (signature, determinant, and nullity) defined in terms of the Gordon-Litherland pairing to the invariants defined by Im, Lee and Lee in terms of Goeritz matrices \cite{Im-Lee-Lee-2010}. The correspondence requires switching the checkerboard coloring, which is an aspect which is unique to this setting and still somewhat curious.

Recall from \Cref{remark-tube} that for a link $L \subset \Si \times I$ with spanning surfac $F \subset \Si \times I$, we can construct a new surface  $F'=F\#_\tau \Si$ by attaching a parallel copy of $\Si$ to $F$ using by a small tube $\tau$. From Section 4 in \cite{Boden-Chrisman-Karimi-2021}, we know $F'$ and $F$ are not $S^*$ equivalent. 
Furthermore, $e(F',L) = e(F,L)$. 

\begin{theorem} \label{thm:chrom-dual}
Let $F \subset \Si \times I$ be a connected spanning surface such that the map $H_1(F;\ZZ/2) \to H_1(\Si \times I;\ZZ/2)$ is surjective. Let $F'=F\#_\tau \Si$ be as above, and set  $\cK_F= \Ker (H_1(F;\ZZ/2) \to H_1(\Si \times I;\ZZ/2))$. Then the Brown invariant of $\varphi_{F'}$ is equal to the Brown invariant of the restriction of $\varphi_F$ to $\cK_F,$ i.e., $\Br(\varphi_{F'}) = \Br(\varphi|_{\cK_F})$. 
 \end{theorem}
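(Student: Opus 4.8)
The plan is to compare the two enhanced spaces $(H_1(F';\ZZ/2), \cG_{F'}, \varphi_{F'})$ and $(\cK_F, \cG_F|_{\cK_F}, \varphi_F|_{\cK_F})$ directly by exhibiting a compatible basis. First I would analyze the effect of the tubing operation $F' = F \#_\tau \Si$ on homology. Attaching a tube connecting $F$ to a parallel copy of $\Si$ near $\Si \times \{0\}$ produces a surface with $H_1(F';\ZZ/2) \cong H_1(F;\ZZ/2) \oplus H_1(\Si;\ZZ/2)$ (up to a meridian of the tube, which is null-homologous, and using that the tube has two feet creating one extra handle). So one gets a basis for $H_1(F';\ZZ/2)$ consisting of classes coming from $F$ together with classes $a_1, b_1, \ldots, a_g, b_g$ carried by the parallel copy of $\Si$. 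The key geometric input is to compute the Gordon-Litherland pairing $\cG_{F'}$ on these classes and the self-linking values $\varphi_{F'}$, using that the $\Si$-classes lie on a surface pushed off into the collar $\Si \times \{0\}$.

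The central computation is that, with respect to a suitable such basis, the special matrix representative of $\varphi_{F'}$ has block form
\begin{equation*}
\begin{bmatrix} \wbar{\cG_F|_{\cK_F}} & 0 \\ 0 & H \end{bmatrix} \oplus (\text{identification terms}),
\end{equation*}
where $H$ is a standard hyperbolic-type block coming from the $g$ pairs $(a_i, b_i)$, and the classes in $H_1(F;\ZZ/2)$ not lying in $\cK_F$ get "traded off" against the new $\Si$-classes. Concretely, since $H_1(F;\ZZ/2) \to H_1(\Si\times I;\ZZ/2)$ is surjective with kernel $\cK_F$, one can pick a basis of $H_1(F;\ZZ/2)$ of the form $\{$basis of $\cK_F\} \cup \{f_1,\ldots,f_{2g}\}$ where $f_i$ maps to a symplectic basis of $H_1(\Si;\ZZ/2)$; then $f_i$ and the tube-class $e_i$ (the corresponding $\Si$-class) satisfy $\cG_{F'}(f_i, e_j) = \delta_{ij}$ by a linking-number-in-the-collar computation, while $\cG_{F'}(e_i,e_j)=0$ since the $e_i$ lie on a genuinely embedded copy of $\Si$ with its own (Lagrangian) structure. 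This exhibits the span of the $f_i$ and $e_i$ as an orthogonal summand isomorphic to a sum of copies of the hyperbolic form, which has Brown invariant $0$ (each hyperbolic $\ZZ/4$-enhanced plane is $T_0$ or at worst has Brown invariant $0$, needing a check that the enhancement is the split one — this is where $e(\Si)=0$ and the product structure $\Si \times I$ matter). By additivity of $\Br$ under orthogonal sum and the fact that $\cG_{F'}$ restricted to the $\cK_F$-part agrees with $\cG_F|_{\cK_F}$ (the tubing doesn't change linking numbers of curves already on $F$), we conclude $\Br(\varphi_{F'}) = \Br(\varphi_F|_{\cK_F}) + 0$.

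I expect the main obstacle to be verifying precisely that the $f_i$, $e_i$ classes split off as an \emph{orthogonal} hyperbolic summand with the \emph{split} quadratic enhancement — i.e., controlling both the off-diagonal GL-pairings $\cG_{F'}(f_i, \cK_F)$ and $\cG_{F'}(e_i, \cK_F)$ (which must be arranged to vanish, possibly after a change of basis replacing $f_i$ by $f_i$ plus a correction in $\cK_F$, a move that doesn't affect the conclusion since $\cK_F$ is where $\varphi_F|_{\cK_F}$ lives) and the self-enhancement values $\varphi_{F'}(e_i)$, $\varphi_{F'}(f_i)$ modulo $4$. The self-linking $\varphi_{F'}(e_i) = \cG_{F'}(e_i,e_i)$ should be forced to be even because $e_i$ bounds on the pushed-off $\Si$ side, and in fact $0 \bmod 4$ using triviality of the Euler class; together with $\varphi_{F'}(f_i) \equiv \varphi_F(f_i) \bmod 4$ this pins down the summand. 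Once the matrix is in the claimed block form, the rest is the elementary additivity computation, invoking \Cref{unimodular} to pass freely between unimodular-congruent matrix representatives and the graphical calculus (or \eqref{eqn:indecomp}) to evaluate $\Br$ on the hyperbolic blocks.
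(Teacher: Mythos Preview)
Your overall strategy---split $H_1(F';\ZZ/2)$ into the $\cK_F$-block and a complementary $4g$-dimensional block carried by the $f_i$ and the $\Si$-classes $e_i$, then show the complementary block contributes $0$ to the Brown invariant---is exactly the paper's strategy. Two points of execution differ, and one of them is where your proposal would get stuck.

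First, a minor correction: the pairing $\cG_{F'}(f_i,e_j)$ is not $\delta_{ij}$ but rather the intersection pairing on $\Si$, i.e.\ the symplectic matrix $J_g$ (this is the linking-in-the-collar computation you allude to). This is harmless, since $J_g$ is unimodular.

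Second, and more substantively: you aim to show the $4g$-dimensional $(f_i,e_i)$-block is an orthogonal sum of copies of $T_0$. This would require controlling both $\cG_{F'}(f_i,f_j)$ and $\varphi_{F'}(f_i)\bmod 4$, and there is no reason these vanish---the $f_i$ are arbitrary lifts of a symplectic basis, so the ``$B$-block'' $\cG_{F'}(f_i,f_j)$ can be anything. The paper sidesteps this entirely: it works with the \emph{integral} Gordon--Litherland matrix, which in a suitable basis has block form
\[
M=\begin{bmatrix} A & * & 0 \\ * & B & J_g \\ 0 & J_g^\tr & 0 \end{bmatrix},
\]
performs an integral unimodular congruence (using that $J_g$ is invertible) to clear the $*$-blocks, invokes \Cref{unimodular} to preserve the Brown invariant, and then observes that the lower $4g\times 4g$ block $\left[\begin{smallmatrix} B & J_g \\ J_g^\tr & 0\end{smallmatrix}\right]$ is \emph{metabolic}: the span of the $e_i$ is a half-dimensional subspace on which the enhancement vanishes (since $\cG_{F'}(e_i,e_j)=0$). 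Metabolic forms have Brown invariant $0$ regardless of $B$, so no analysis of $\varphi_{F'}(f_i)$ is needed. Your observation that $\varphi_{F'}(e_i)=0$ is exactly the Lagrangian condition; if you replace ``sum of $T_0$'s'' by ``metabolic'' and invoke that fact, your argument goes through.
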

 
\begin{proof}
Let $g =g(\Si)$ be the  genus of $\Si$. As in the proof of \cite[Theorem 4.1]{Boden-Chrisman-Karimi-2021}, we can choose a basis for $H_1(F';\ZZ)$ such that the Gordon-Litherland matrix has block decomposition 
\begin{equation} \label{eqn:big-matrix}
M=\begin{bmatrix} A&*& 0 \\ *&B & J_g \\ 0&J_g^\tr & 0 \end{bmatrix},
\end{equation}
where $A$ is the $n \times n$ matrix for the restriction of $\cG_{F}$ to $\Ker (H_1(F;\ZZ) \to H_1(\Si \times I;\ZZ))$, and 
$$J_g=\begin{bmatrix} 0& I_g \\ -I_g & 0 \end{bmatrix}$$ 
is the standard $2g \times 2g$ symplectic matrix representing the intersection form on $\Si$. (Here $I_g$ denotes the $g \times g$ identity matrix.)

The matrix in \eqref{eqn:big-matrix} is unimodular congruent to one of the form:
\begin{equation*}\label{eqn:big-matrix-2}
M'=\begin{bmatrix} A&0& 0 \\ 0&B & J_g \\ 0&J_g^\tr & 0 \end{bmatrix}.
\end{equation*}
Therefore, by \Cref{unimodular}, it follows that $\Br(\varphi_{F'}) = \Br(\varphi_{M})=\Br(\varphi_{M'})$. 

A non-singular $\ZZ/4$ quadratic form $\phi \co V \to \ZZ/4$ is said to be \textit{metabolic} if there exists a half-dimensional subspace $H \subset V$ such that $\phi$ vanishes on $H$. (In \cite[\S 4]{Matsumoto}, this is defined as \textit{split}.) By \cite[Lemma 4.1]{Matsumoto}, if $\phi$ is metabolic, then
$\Br(\phi)=0$. Since the Brown invariant is additive under orthogonal sum,
and since the quadratic form associated to 
$$\begin{bmatrix} B & J_g \\ J_g^\tr & 0 \end{bmatrix}$$ is clearly metabolic,
it follows that $\Br(\varphi_{M'})=\Br(\varphi_A).$ However, $\varphi_A$ is equal to the restricted $\ZZ/4$ quadratic form $\varphi|_{\cK_F},$ and the result now follows. 
\end{proof}
 
Now suppose $D \subset \Si$ is a cellularly embedded, checkerboard colorable link diagram. Then the inclusion map $i \co D \to \Si$ induces a surjection $i_* \co H_1(D;\ZZ) \to H_1(\Si;\ZZ).$ If $F$ is a checkerboard surface, then the map $H_1(F;\ZZ) \to H_1(\Si \times I;\ZZ)$ is surjective.
 
\begin{theorem} \label{thm:chromatic}
Let $D \subset \Si$ be a cellularly embedded, checkerboard colorable link diagram with coloring $\xi$,
and let $F$ be its black checkerboard surface. Let $\xi^*$ be the opposite coloring and $F^*$ its black checkerboard surface. (Thus $F^*$ is the white surface for $\xi.$)
The two sets of Brown invariants are related by chromatic duality:
$\Br_{F}(D) = \Br_{\xi^*}(D)$ and $\Br_{F^*}(D) = \Br_{\xi}(D)$.
 \end{theorem}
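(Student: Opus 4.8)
The plan is to reduce the theorem to \Cref{thm:chrom-dual} together with a bookkeeping argument about the Euler-class correction terms. Specifically, I would first observe that the Brown invariant $\Br_{F^*}(D)$ of \Cref{defn:Brown-link} is, by definition, $\Br(\varphi_{F^*}) + \tfrac12 e(F^*,L)$, while the Goeritz-side invariant $\Br_{\xi}(D)$ of \Cref{defn:Brown-link-Goeritz} is $\Br(\varphi_\xi) - \mu_\xi(D)$, where $\varphi_\xi$ is the $\ZZ/4$ enhancement attached to the Goeritz matrix $G_\xi(D)$, defined on $\cK_F = \Ker(H_1(F;\ZZ/2) \to H_1(\Si;\ZZ/2))$ for the \emph{black} surface $F$ of $\xi$. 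So the two things I must match up are (i) $\Br(\varphi_{F^*})$ versus $\Br(\varphi_\xi)$, and (ii) $\tfrac12 e(F^*,L)$ versus $-\mu_\xi(D)$.

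For (ii), the parenthetical remark following \Cref{defn:Brown-link-Goeritz} already records that $\mu_\xi(D) = -\tfrac12 e(F,L)$ via Lemma 2.4 of \cite{Boden-Chrisman-Karimi-2021}; here $F$ is the black surface for $\xi$, equivalently the white surface for $\xi^*$. I would then invoke the chromatic-duality/Euler-number relationship between a checkerboard surface and its opposite: switching colors changes the Euler class in a controlled way, and combined with $e(F',L) = e(F,L)$ for the tubing construction $F' = F \#_\tau \Si$ (noted just before \Cref{thm:chrom-dual}), this should identify the correction terms. In fact the cleanest route is to apply \Cref{thm:chrom-dual} with the surface named $F$ there taken to be our $F^*$ (the black surface for $\xi^*$), so that $\cK_{F^*} = \Ker(H_1(F^*;\ZZ/2)\to H_1(\Si\times I;\ZZ/2))$ and $(F^*)' = F^* \#_\tau \Si$ is $S^*$-equivalent to $F$ by \cite[Proposition 1.6]{Boden-Chrisman-Karimi-2021}; then \Cref{thm:chrom-dual} gives $\Br(\varphi_{F^* \#_\tau \Si}) = \Br(\varphi_{F^*}|_{\cK_{F^*}})$, and by $S^*$-invariance (\Cref{prop:klug}) the left side equals $\Br(\varphi_F)$. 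Wait — I need this oriented correctly; the statement to prove is $\Br_{F^*}(D) = \Br_\xi(D)$, so $\varphi_\xi$ lives on $\cK_F$ for the black surface $F$ of $\xi$, and I should apply \Cref{thm:chrom-dual} with its "$F$" equal to \emph{our} $F$, concluding $\Br(\varphi_{F\#_\tau\Si}) = \Br(\varphi_F|_{\cK_F}) = \Br(\varphi_\xi)$ once I check that the special matrix representative of $\varphi_F|_{\cK_F}$ agrees, up to unimodular congruence over $\ZZ$, with the Goeritz matrix $G_\xi(D)$ — this is exactly the content of the analogous signature/determinant computation in \cite[\S4]{Boden-Chrisman-Karimi-2021}, so \Cref{unimodular} then gives equality of Brown invariants. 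Finally, since $F^* \#_\tau \Si$ is $S^*$-equivalent to $F$, \Cref{prop:klug} yields $\Br(\varphi_{F\#_\tau\Si})$-versus-$\Br(\varphi_{F^*})$ type statements, but I must be careful: $F \#_\tau \Si$ is $S^*$-equivalent to $F^*$ (the \emph{other} $S^*$-class), so $\Br(\varphi_{F\#_\tau\Si}) = \Br(\varphi_{F^*})$, and hence $\Br(\varphi_{F^*}) = \Br(\varphi_\xi)$.

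Putting the pieces together: $\Br_{F^*}(D) = \Br(\varphi_{F^*}) + \tfrac12 e(F^*,L) = \Br(\varphi_\xi) + \tfrac12 e(F^*,L)$, and I must check $\tfrac12 e(F^*,L) = -\mu_\xi(D)$. Since $\mu_\xi(D) = -\tfrac12 e(F_\xi, L)$ with $F_\xi$ the black surface of $\xi$, which is the white surface of $\xi^*$, I need the identity $e(F^*, L) = e(F_\xi, L)$ where $F^*$ is black-for-$\xi^*$ and $F_\xi$ is black-for-$\xi$. Hmm, these are the two different checkerboard surfaces, so their Euler classes need not be equal; rather, I expect the tubing relation and the crossing-type bookkeeping of Lemma 2.4 of \cite{Boden-Chrisman-Karimi-2021} to show that the type-I and type-II crossings swap roles under color switch in just the way that makes $-\mu_{\xi^*}(D) = \tfrac12 e(F_\xi,L)$ match, and the symmetric statement $\Br_F(D) = \Br_{\xi^*}(D)$ follows by the same argument with the roles of $\xi$ and $\xi^*$ interchanged. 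The main obstacle I anticipate is precisely this last bookkeeping step — correctly tracking how the correction terms $\tfrac12 e(\cdot, L)$ and $\mu_{(\cdot)}(D)$ transform under switching the checkerboard coloring, and confirming that the special matrix representative of $\varphi_F$ restricted to $\cK_F$ really is integrally congruent to $G_\xi(D)$ (not merely congruent after mod-2/mod-4 reduction), so that \Cref{unimodular} applies; everything else is a direct appeal to \Cref{thm:chrom-dual}, \Cref{prop:klug}, and \Cref{unimodular}.
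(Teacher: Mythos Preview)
Your overall plan matches the paper's: invoke \Cref{thm:chrom-dual}, identify the restricted Gordon--Litherland form $\varphi_F|_{\cK_F}$ with the Goeritz form $\varphi_\xi$ (this is precisely \cite[Lemma~5.3]{Boden-Chrisman-Karimi-2021}, which the paper cites together with \Cref{thm:chrom-dual}), and use $S^*$-invariance to pass to $F^*$. So the architecture is right.

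There is, however, a genuine error in how you apply \Cref{prop:klug}. You write ``$F\#_\tau\Si$ is $S^*$-equivalent to $F^*$, so $\Br(\varphi_{F\#_\tau\Si}) = \Br(\varphi_{F^*})$.'' This does not follow: \Cref{prop:klug} says the \emph{link} invariant $\Br_F(L) = \Br(\varphi_F) + \tfrac12 e(F,L)$ is $S^*$-invariant, not $\Br(\varphi_F)$ by itself. Indeed $\Br(\varphi_F)$ alone is \emph{not} $S^*$-invariant --- attaching a half-twisted band adds a $P_{\pm}$ summand and shifts it by $\pm 1$. This is exactly why you then run into trouble trying to force $\tfrac12 e(F^*,L) = -\mu_\xi(D)$: that identity is false in general, and no amount of crossing-type bookkeeping will produce it.

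The fix is to apply $S^*$-invariance at the correct level and carry the correction term through the tubing step rather than splitting the two pieces apart. Since $F\#_\tau\Si$ is $S^*$-equivalent to $F^*$, \Cref{prop:klug} gives
\[
\Br_{F^*}(L) \;=\; \Br_{F\#_\tau\Si}(L)
\;=\; \Br(\varphi_{F\#_\tau\Si}) + \tfrac12\, e(F\#_\tau\Si,L)
\;=\; \Br(\varphi_{F\#_\tau\Si}) + \tfrac12\, e(F,L),
\]
using $e(F\#_\tau\Si,L)=e(F,L)$. Now \Cref{thm:chrom-dual} and \cite[Lemma~5.3]{Boden-Chrisman-Karimi-2021} give $\Br(\varphi_{F\#_\tau\Si}) = \Br(\varphi_F|_{\cK_F}) = \Br(\varphi_\xi)$, and since $\mu_\xi(D) = -\tfrac12 e(F,L)$ you obtain $\Br_{F^*}(L) = \Br(\varphi_\xi) - \mu_\xi(D) = \Br_\xi(D)$ directly. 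The ``main obstacle'' you anticipated simply disappears: you never need to compare $e(F,L)$ with $e(F^*,L)$. The other identity $\Br_F(D)=\Br_{\xi^*}(D)$ follows by interchanging $\xi$ and $\xi^*$.
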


\begin{proof}
The proof follows from \Cref{thm:chrom-dual} and \cite[Lemma 5.3]{Boden-Chrisman-Karimi-2021}.
\end{proof}

\begin{remark}
At first glance, it may seem that the switch of colorings in \Cref{thm:chromatic} is the result of an incompatibility in the choice of conventions. However, it is unavoidable, and in fact it is an intrinsic aspect, and a curious one at that. This aspect is not readily apparent for classical links since the two sets of invariants are always equal in that case.
\end{remark}

Given a non-split checkerboard colorable link $L\subset \Si\times I$ with checkerboard surfaces $F$ and $F^*$, then any spanning surface for $L$ has Brown invariant equal to $\Br_F(L)$ or $\Br_{F^*}(L)$.

\begin{theorem}\label{cobordism}
Let $K \subset \Si\times I$ be a knot with spanning surface $F\subset \Si\times I$. Assume $e(F,K)=0$ and $\det(\cG_F) \not\equiv 0 \text{ (mod $2$)}$. If $K$ is slice, then $\Br_{F}(K)=0$.
\end{theorem}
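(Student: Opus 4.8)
The plan is to mimic the proof of \Cref{slice}, but to track the $\ZZ/4$ quadratic enhancement $\varphi_F$ rather than just the signature. By \Cref{discussion}, since $K$ is slice we may (after adding half-twisted bands, which by \Cref{remark-tube} does not change $e(F,K)=0$ and preserves the homological hypothesis — though note the hypothesis $\det(\cG_F)\not\equiv 0\pmod 2$ means $\cG_F$ is already nonsingular mod $2$, so no tubes are needed) find a slice disk $D \subset W \times I$ with $E = F \cup D$ bounding a compact $3$-manifold $V \subset W \times I$. Then \Cref{kernel} provides a generating set $\{\ga_1,\dots,\ga_{2g}\}$ of $H_1(F;\ZZ)$ with $\ga_1,\dots,\ga_g$ in the kernel of $H_1(F;\ZZ)\to H_1(V;\QQ)$, and the geometric argument in \Cref{slice} (pushing surfaces bounding multiples of the $\ga_i$ into $\partial N(V)$) shows $\cG_F$ vanishes on the subgroup $U = \langle \ga_1,\dots,\ga_g\rangle$.

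Now I would pass to mod $2$ coefficients. Since $\det(\cG_F)$ is odd, the inner product $\bigcdot$ on $V_F = H_1(F;\ZZ/2)$ is nonsingular, so $(V_F,\bigcdot\,,\varphi_F)$ is a nonsingular $\ZZ/4$ enhanced space. Let $H = $ image of $U$ in $V_F$; it is half-dimensional (spanned by the reductions of $\ga_1,\dots,\ga_g$, which are independent since $\{\ga_i\}$ is a basis — or one argues as in \Cref{kernel} that $\mathrm{Ker}\,i_*$ on mod $2$ homology is half-dimensional and choose the $\ga_i$ accordingly). Because $\varphi_F([\ga_i]) \equiv \cG_F(\ga_i,\ga_i) \pmod 4$ and $\cG_F$ vanishes on $U$ over $\ZZ$, we get $\varphi_F = 0$ on $H$, not merely $\bigcdot = 0$. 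Thus $H$ is a half-dimensional subspace on which the nonsingular form $\varphi_F$ vanishes, i.e.\ $\varphi_F$ is metabolic in the sense of \cite[\S 4]{Matsumoto}. By \cite[Lemma 4.1]{Matsumoto} (already invoked in the proof of \Cref{thm:chrom-dual}), $\Br(\varphi_F) = 0$. Since $e(F,K) = 0$, \Cref{defn:Brown-link} gives $\Br_F(K) = \Br(\varphi_F) + \tfrac12 e(F,K) = 0$.

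The one point requiring care — and the step I expect to be the main obstacle — is verifying that the reductions $[\ga_1],\dots,[\ga_g]$ are genuinely linearly independent over $\ZZ/2$, so that $H$ really is half-dimensional and $\varphi_F|_H = 0$ makes $\varphi_F$ metabolic rather than merely totally isotropic of smaller rank. This is where the hypothesis $\det(\cG_F)\not\equiv 0\pmod 2$ is used: it forces $\dim_{\ZZ/2} V_F = \operatorname{rk} H_1(F;\ZZ) = 2g$ with no radical, and one must check that the integral basis $\{\ga_i\}$ from \Cref{kernel} can be chosen to reduce to a basis of $V_F$ (equivalently, that the kernel of $H_1(E;\ZZ/2)\to H_1(V;\ZZ/2)$, pulled back to $H_1(F;\ZZ/2)$, is exactly $g$-dimensional and transverse in the right sense). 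Granting that, the rest is the metabolic vanishing lemma already cited in the paper. I would also remark parenthetically that this recovers $\si(K,F)=0$ from the even-reduction case, consistent with \Cref{slice}, and that when $\varphi_F$ is additionally even this is just the statement $\Arf = 0$ for slice knots.
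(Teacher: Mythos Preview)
Your proposal is correct and follows essentially the same approach as the paper, which simply cites \Cref{slice} to obtain the half-dimensional integral subspace $U$ on which $\cG_F$ vanishes, reduces mod~2, and invokes the metabolic vanishing result (\cite[Theorem 1.20(ix)]{Brown} or \cite[Lemma 4.1]{Matsumoto}). The half-dimensionality concern you flag is real but easily resolved, and the paper glosses over it too: the generating set $\{\ga_1,\dots,\ga_{2g}\}$ in \Cref{kernel} is a $\ZZ$-basis of the free abelian group $H_1(F;\ZZ)$ (it has $2g$ elements and $H_1(F;\ZZ)\cong\ZZ^{2g}$), so its mod~2 reduction is automatically a $\ZZ/2$-basis of $V_F$ and the image $H$ is genuinely $g$-dimensional.
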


\begin{proof}
If $K$ is slice, then \Cref{slice} implies that the Gordon-Litherland pairing is metabolic,
namely that $\cG_F$ vanishes on a half-dimensional subspace $U$ of $H_1(F;\ZZ)$. It follows that $\varphi_F$ must vanish on the image of $U$ under the mod 2 projection
$H_1(F;\ZZ) \to H_1(F;\ZZ/2)$. The image is
again a half-dimensional subspace of
$H_1(F;\ZZ/2)$.

Theorem 1.20 (ix) in \cite{Brown} implies that the Brown invariant vanishes on any non-singular, metabolic quadratic space (see also \cite[Lemma 4.1]{Matsumoto}). Therefore, $\Br(\varphi_F)=0$. Since $e(F,K)=0$, it follows that $\Br_{F}(K)=0$.
\end{proof}

\subsection{Mirror Images} 
In this section, we relate the
Brown invariants of a checkerboard colorable link in a thickened surface to those of its vertical and horizontal mirror images. We begin by considering orientation reversal on $L$.

If $L\subset \Si\times I$ is an oriented link with spanning surface $F \subset \Si \times I$, then $\Br_F(-L)=\Br_F(L),$ where $-L$ denotes the link with opposite orientation. Although $\Br_F(L)$ is insensitive to orientation change, it is sensitive to a change of orientation on a single component. In fact, writing $\Br_F(L) = \Br(\varphi_F) +\frac{1}{2} e(F,L)$, this is evident from the formulas $e(F,L)=e(F)-\la(L)$ and $\la(L) = \sum_{i\neq j} \lk(K_i,K_j)$ for
$L = K_1 \cup \cdots \cup K_n$.

Let $L\subset \Si\times I$ be an oriented link. The image of $L$ under the map $\phi \co \Si \times I \to \Si \times I$ given by $\phi(x,t) =(x,1-t)$ is called the \emph{vertical} mirror image of $L$ and is denoted $L^{*}$. 
Let $f\co \Si \to \Si$ be an orientation reversing homeomorphism and set
$\psi\co \Si \times I \to \Si \times I$ to be the map given by $\psi(x,t) =(f(x),t)$.
The image of $L$ under $\psi$ is called the \emph{horizontal} mirror image of $L$ and is denoted $L^\dag$. 

If $F \subset \Si \times I$ is a spanning surface for $L$, then
$F^*=\phi(F)$ is a spanning surface for $L^*$ and
$F^\dag=\psi(F)$ is a spanning surface for $L^\dag$.

\begin{proposition}\label{prop:mirror_image}
Let $L\subset \Si \times I$ be a  link with spanning surface $F \subset \Si \times I$.
Then the Brown invariant of the vertical and horizontal mirror images of $L$ satisfy
\begin{eqnarray*}
\Br_{F^*}(L^*)=-\Br_{F}(L), \; & \text{and} & \; \Br_{F^\dag}(L^\dag)=-\Br_{F}(L). 
\end{eqnarray*}

\end{proposition}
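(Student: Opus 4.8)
The plan is to track how the Gordon--Litherland pairing and its $\ZZ/4$ quadratic enhancement transform under the two mirror maps, and then deduce the effect on the Brown invariant from the additive/multiplicative properties recorded in \Cref{section-Brown}. The key observation should be that both $\phi$ and $\psi$ reverse the relevant sign in the linking pairing: for the vertical mirror $\phi(x,t)=(x,1-t)$, the roles of the two sides of $F$ are interchanged and the asymmetric linking $\lk(J,K)$ used to build $\cG_F$ changes sign, so that $\cG_{F^*} = -\cG_F$ under the natural identification $H_1(F^*;\ZZ) \cong H_1(F;\ZZ)$ induced by $\phi$. For the horizontal mirror $\psi(x,t)=(f(x),t)$ with $f$ orientation-reversing, one again gets $\cG_{F^\dag}=-\cG_F$, since reversing the orientation of $\Si$ negates the intersection numbers defining $\lk$. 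In both cases one should also check the Euler-class correction term: I expect $e(F^*,L^*) = -e(F,L)$ and $e(F^\dag,L^\dag)=-e(F,L)$, because $e(F,L)$ is built from the same linking numbers (note $\lk(K_i,K_i')$ for a push-off missing $F$ negates under either mirror).

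From $\cG_{F^*} = -\cG_F$ it follows at the level of enhancements that $\varphi_{F^*}([a]) \equiv -\cG_F(a,a) \pmod 4$, i.e. $\varphi_{F^*} = -\varphi_F$ (the underlying mod $2$ inner product is unchanged, since negation is trivial mod $2$). So the next step is a purely algebraic lemma: for a $\ZZ/4$ enhanced space $(V,\bigcdot\,,\varphi)$, one has $\Br(-\varphi) = -\Br(\varphi)$, and $-\varphi$ is proper iff $\varphi$ is. This is immediate from the Monsky sum \eqref{eqn:Monsky}: $\la(-\varphi) = \sum_v i^{-\varphi(v)} = \overline{\la(\varphi)}$, the complex conjugate, and comparing with \eqref{eqn:Brown} (or \eqref{eqn:Monsky2} in the singular proper case) gives $e^{\Br(-\varphi)\pi i/4} = e^{-\Br(\varphi)\pi i/4}$, hence $\Br(-\varphi) = -\Br(\varphi)$ in $\ZZ/8$; if $\varphi$ is improper then $\la(\varphi)=0=\la(-\varphi)$ so $\Br(-\varphi)=\infty$ as well. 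Alternatively one can invoke \eqref{eqn:indecomp}: negation swaps $P_+ \leftrightarrow P_-$ and fixes $T_0, T_4$, which negates the Brown invariant on non-singular spaces, and the singular case reduces to this via the radical.

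Assembling the pieces: $\Br_{F^*}(L^*) = \Br(\varphi_{F^*}) + \tfrac12 e(F^*,L^*) = -\Br(\varphi_F) - \tfrac12 e(F,L) = -\Br_F(L)$, and identically for the horizontal case. I would present the algebraic lemma ($\Br(-\varphi)=-\Br(\varphi)$) first, then handle the two geometric computations in parallel, emphasizing that the argument for $\phi$ and $\psi$ is formally the same once one knows each negates $\cG_F$ and $e(F,L)$.

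\textbf{Main obstacle.} The genuinely careful part is verifying that each mirror map negates the asymmetric linking number $\lk(J,K)$ (and hence $\cG_F$ and $e(F,L)$) rather than merely permuting things or leaving a sign ambiguity; this requires being precise about the convention that the 2-chain $B$ has $\partial B = K - v$ with $v$ a cycle in $\Si\times\{1\}$, and about how $\phi$ (which swaps $\Si\times\{1\}$ and $\Si\times\{0\}$) and $\psi$ (which reverses the orientation of $\Si$, hence of all intersection numbers) interact with that convention. The transfer map $\tau$ and the passage to the boundary double cover $\wt F$ need to be checked to behave naturally under these homeomorphisms, but since $\phi$ and $\psi$ are ambient homeomorphisms carrying $F$ to $F^{*}$ (resp.\ $F^\dag$), naturality is automatic and the only real content is the single sign.
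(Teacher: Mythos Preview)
Your proposal is correct and follows essentially the same approach as the paper: show that each mirror map negates both the Gordon--Litherland pairing and the Euler-class correction term, then invoke the algebraic fact $\Br(-\varphi)=-\Br(\varphi)$. The paper's proof is terser, citing \cite[Proposition~5.7]{Boden-Chrisman-Karimi-2021} for the negation of $\cG_F$ and $e(F,L)$ and \cite[Theorem~1.20,\,(iii)]{Brown} for $\Br(-\varphi)=-\Br(\varphi)$, whereas you spell out both steps directly (your Monsky-sum/complex-conjugation argument is exactly a proof of Brown's Theorem~1.20(iii)).
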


\begin{proof}
By \cite[Proposition 5.7]{Boden-Chrisman-Karimi-2021}, $\cG_{L^*,F^*} = - \cG_{L,F},$ and $e(F^*,L^*) = -e(F,L).$ The result now follows from \Cref{defn:Brown-link} and \cite[Theorem 1.20, (iii)]{Brown}. A similar argument shows that
$\cG_{L^\dag,F^\dag} = - \cG_{L,F},$ and $e(F^\dag,L^\dag) = -e(F,L),$ and the
 formula for $\Br_{F^\dag}(L^\dag)$ follows.  
\end{proof} 


\subsection{Arf invariants of links}
Let $L \subset \Si \times I$ be an oriented, null-homologous link and $F \subset \Si \times I$ a Seifert surface for $L$. Then the Gordon-Litherland pairing $\cG_F$ coincides with the symmetrized Seifert pairing. In particular, $\cG_F(a,a) \in 2 \ZZ$,
and its associated $\ZZ/4$-valued form $\varphi_F$ is even. Therefore, the Brown invariant of $\varphi_F$ is related to the Arf invariant of the ordinary quadratic form $q_F$ by the formula  $\Br(\varphi_F) = 4\Arf(q_{F})$.

Further, we have $e(F)=0$ and $e(F,L)=-\la(L) = -\sum_{i\neq j} \lk(K_i,K_j)$, the total linking number. (Here we write $L = K_1 \cup \dots \cup K_m$ as a union of its components.) Therefore, the link invariants are related by the formula
$$\Br_F(L)=4 \Arf(q_F) +\la(L).$$

In \cite{Micah-Sujoy}, Chrisman and Mukherjee define  Arf invariants for null-homologous knots in thickened surfaces. Indeed, for a knot $K\subset \Si \times I$  with Seifert surface $F$, they define $q_{K,F}(x)\equiv \lk(x^{+},x)\text{ (mod $2$)}$ and show that it gives an ordinary quadratic form on $H_1(F;\ZZ/2)$. Assuming that $q_{K,F}$ is non-singular, they 
use it to define the Arf invariant of $K$. Note that $q_{K,F}$ is non-singular precisely when $\det(K,F)\not \equiv 0 \text{ (mod $2$)}$.


We remark that, like the Brown invariants, the Arf invariant can be defined for singular, proper forms, i.e., the assumption that $q_{K,F}$ is non-singular is not necessary. This is illustrated in \Cref{ex-6-87310} below.
In general, the Brown invariant provides a way to define the Arf invariant for any oriented null-homologous link $L\subset \Si \times I$ and Seifert surface $F \subset \Si \times I$. 
Thus, the Brown invariants in \Cref{section-link} recover and extend the Arf invariants introduced in \cite{Micah-Sujoy}. 

\begin{proposition}
If $K\subset \Si \times I$ is a null-homologous knot with Seifert surface $F$,
and $\det(K,F)\not\equiv 0 \text{ (mod $2$)}$, then 
$$\Br_{F}(K)=4\Arf(q_{K,F}).$$ 
\end{proposition}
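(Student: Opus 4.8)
The plan is to unwind the two definitions of the Brown invariant on the left-hand side and identify both with $4\Arf(q_{K,F})$, using the material developed in \Cref{section-Brown} and the preceding discussion. First I would recall from \Cref{defn:Brown-link} that $\Br_F(K) = \Br(\varphi_F) + \tfrac{1}{2}e(F,K)$, and observe that for a \emph{knot} one has $e(F,K) = e(F)$ and, since $F$ is a Seifert surface, $e(F) = 0$. Hence the correction term vanishes and it suffices to prove $\Br(\varphi_F) = 4\Arf(q_{K,F})$.

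Next I would invoke the fact, already stated in the paragraph opening \Cref{section-Brown} (``Arf invariants of links''), that when $F$ is a Seifert surface the Gordon--Litherland pairing $\cG_F$ coincides with the symmetrized Seifert pairing, so that $\cG_F(a,a) \in 2\ZZ$ for all $a \in H_1(F;\ZZ)$. By the definition $\varphi_F([a]) \equiv \cG_F(a,a) \pmod 4$, this shows $\varphi_F$ takes values in $2\ZZ/4\ZZ$, i.e. $\varphi_F$ is \emph{even}. From the general discussion in \Cref{section-Brown}, an even enhancement $\varphi$ has an associated ordinary $\ZZ/2$-quadratic form $q = \varphi/2$, and its Brown invariant satisfies $\Br(\varphi) = 4\Arf(q)$ (this is the ``specialization'' statement at the end of \Cref{section-Brown}). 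So the remaining point is purely bookkeeping: the ordinary form $q_F = \varphi_F/2$ extracted from $\varphi_F$ in this way must be shown to coincide with the form $q_{K,F}$ of \cite{Micah-Sujoy}, defined by $q_{K,F}(x) \equiv \lk(x^+,x) \pmod 2$.

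That identification is the only substantive step, and I expect it to be short. One has $q_F([a]) \equiv \tfrac{1}{2}\cG_F(a,a) \pmod 2$, and since $\cG_F$ is the symmetrized Seifert form, $\tfrac{1}{2}\cG_F(a,a) = \tfrac{1}{2}\big(\lk(a^+,a) + \lk(a,a^+)\big)$; because we only care about this modulo $2$ and $\lk(a,a^+) \equiv \lk(a^+,a) \pmod{?}$ — more carefully, $\cG_F(a,a) = \lk(a^+,a) + \lk(a^-,a) = V(a,a) + V^\tr(a,a)$ where $V$ is the Seifert matrix, and the standard relation $V - V^\tr$ being the intersection form (which vanishes on the diagonal) gives $\cG_F(a,a) = 2\lk(a^+,a) \bmod{}$ an even correction, hence $\tfrac{1}{2}\cG_F(a,a) \equiv \lk(a^+,a) \pmod 2$. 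Thus $q_F = q_{K,F}$, and combining with $\Br(\varphi_F) = 4\Arf(q_F)$ and the vanishing of the correction term completes the proof.

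The main obstacle, such as it is, is making sure the mod-$2$ and mod-$4$ reductions are handled consistently — in particular that $\tfrac{1}{2}$ of a mod-$4$ even class is well-defined mod $2$, and that the diagonal of the Seifert form reduces correctly — but all of this is routine once the ``$\varphi_F$ is even $\Rightarrow \Br = 4\Arf$'' principle from \Cref{section-Brown} is in hand. No new geometry is needed; this proposition is essentially a corollary of the definitions plus the observation that a Seifert surface forces $\varphi_F$ to be even.
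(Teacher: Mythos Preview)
Your proposal is correct and follows essentially the same approach as the paper's proof: both reduce to showing $\varphi_F = 2q_{K,F}$ and then invoke the general fact that $\Br(\varphi) = 4\Arf(q)$ for an even enhancement (the paper cites \cite[Theorem 1.20, (vii)]{Brown} for this, while you appeal to the discussion in \Cref{section-Brown}). The paper's proof is a two-line affair that relies on the preceding paragraph for the vanishing of the correction term; you spell out that step and the identification $q_F = q_{K,F}$ more explicitly, but there is no substantive difference.
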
 

\begin{proof}
By the definitions of $q_{K,F}$ and $\varphi_{F}$, we see that $\varphi_{F}=2q_{K,F}$. The result then follows from \cite[Theorem 1.20, (vii)]{Brown}. 
\end{proof}

\begin{example}\label{ex-6-87310}
\Cref{fig-6-87310} shows a six crossing knot $K$ in the thickened torus, along with the Tait graphs for the checkerboard surfaces $F$ (middle) and $F^*$ (right). 

For the surface $F$, the associated Goeritz matrix is 
$$G_{\xi}(D)=\left[
\begin{matrix}
1&-1&-1\\
*&1&-1\\
*&*&1
\end{matrix}
\right].$$ 
We have $\lambda(\varphi_{\xi})=4+4i$. Notice that $\varphi_\xi$ is singular and proper with $\Br(\varphi_{\xi})=1$. We have $\mu_{\xi}=-3$, therefore $\Br_{\xi}(K)=1-(-3)=4$.

For the surface $F^*$, the associated Goeritz matrix is 
$G_{\xi^*}(D)=[-3]$. Then $\lambda(\varphi_{\xi^*})=1+i$ and  $\Br(\varphi_{\xi^*})=1$. Further $\mu_{\xi^*}=-3$, thus $\Br_{\xi^*}(K)=1-(-3)=4$.

Notice that $K$ is null-homologous, in fact it is the knot $6.87310$ in Figure 21 of  \cite{Boden-Chrisman-Gaudreau-2020}. Let $F'$ be the Seifert surface whose Seifert matrices are listed in \cite[Table 3]{Boden-Chrisman-Gaudreau-2020}.
Then $F'$ is $S^*$-equivalent to $F$, and its quadratic form $q_{K,F'}$ is singular and proper. Since $\Br_{\xi}(K)=4$, it follows that $\Arf(q_{K,F'})=1.$
\hfill $\Diamond$ 
\end{example}

Suppose $K \subset \Si \times I$ is a  null-homologous knot with Seifert surface $F$. Notice that $q_{K,F}$ is non-singular if and only if $\det(K,F)$ is odd. Set $d = \det(K,F)$.
By Levine's formula (see \cite{Levine-66}), we have  
$$
\Arf(q_{K,F})=\begin{cases}0, & \text{if $d\equiv\pm 1$ (mod 8),}\\
1, & \text{if $d\equiv\pm 3$ (mod 8).}\end{cases}
$$

If, in addition, $K$ is slice, then \Cref{determinant} implies that $d$
is a perfect square. However, since $d$ is odd, it
follows that $$d=(2k+1)^2=4k^2+4k+1=4k(k+1)+1\equiv 1\text{ (mod $8$)}.$$
Therefore, Levine's formula implies that $\Arf(q_{K,F})=0$.
In particular, for null-homologous knots $K\subset \Si \times I,$ the knot determinant provides a stronger slice obstruction.

\begin{figure}[htbp]
\begin{center}
\includegraphics[scale=0.90]{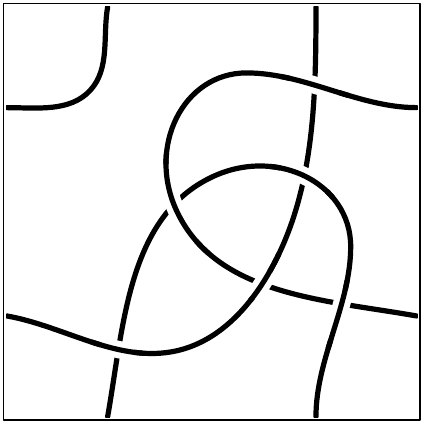}\hspace{.3cm}
\includegraphics[scale=0.90]{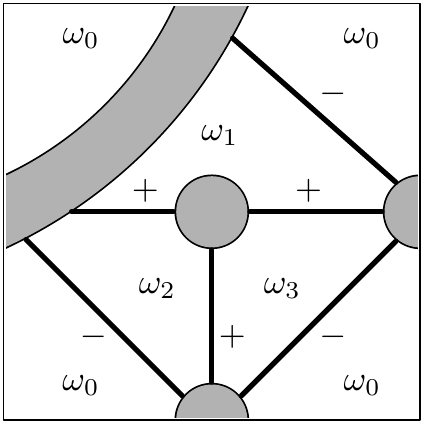}\hspace{.3cm}
\includegraphics[scale=0.90]{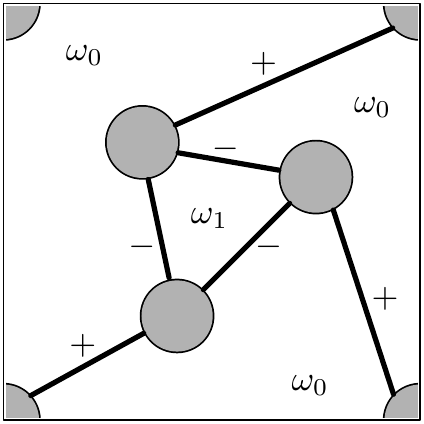}
\caption{\small A null-homologous knot in the thickened torus and the two Tait graphs for its checkerboard surfaces.}
\label{fig-6-87310}
\end{center}
\end{figure}


\section{Concordance of spanning surfaces} \label{sec5}  

In this section, we introduce a notion of concordance for spanning surfaces of knots in thickened surfaces. This is based on \cite[Definition 4.1.1]{Micah-Sujoy}, which is the corresponding notion of concordance for Seifert surfaces.
We show that the knot signature and Brown invariant are invariant under concordance of spanning surfaces.

\begin{definition} \label{defn-spanning-surf}
Let $K_0$ and $K_1$ be knots  in $\Si_0\times I$ and $\Si_1\times I$, respectively, with spanning surfaces $F_0 \subset \Si_0 \times I$ and $F_1 \subset \Si_1 \times I$.
So $F_0, F_1,$ are unoriented surfaces with boundary $\partial F_0 =K_0$ and $\partial F_1 =K_1$.

The spanning surfaces $F_0$ and $F_1$ are said to be \textit{concordant} if there exist:
\begin{itemize}
\item[(i)] a compact oriented 3-manifold $W$ with boundary $\partial W = -\Si_0 \cup \Si_1$, 
\item[(ii)] a properly embedded annulus $A \subset W \times I$ with boundary $\partial A = K_0 \cup K_1$, and \item[(iii)] a compact unoriented 3-manifold $V \subset W \times I$ with boundary $\partial V = F_0 \cup A \cup F_1$. 
\end{itemize}
\end{definition}

If $K_0 \subset \Si_0\times I$ and $K_1 \subset \Si_1\times I$ are knots with concordant spanning surfaces $F_0 \subset \Si_0\times I$ and $F_1 \subset \Si_1\times I$,
then the knots $K_0$ and $K_1$ are concordant. However, the converse is not true. In general, for a checkerboard colorable knot in a thickened surface $\Si \times I$ with genus $g(\Si) \geq 1$, the spanning surfaces given by the black and white regions will not be concordant in the above sense.

The proof of the following lemma is similar to that of \Cref{kernel}, and we omit it.

\begin{lemma}\label{kernel-2}
There are generating sets $\{\ga_1, \ldots, \ga_{g_0}\}$ for $H_1(F_0; \ZZ)$ and $\{\ga'_{1}, \ldots, \ga'_{g_1}\}$ for $H_1(F_1; \ZZ)$,
such that $\ga_1, \ldots, \ga_{m_0},\ga'_1,\ldots,\ga'_{m_1}$ lie in the kernel of the map $H_1(F_0;\ZZ)\oplus H_1(F_1;\ZZ) \to H_1(V;\QQ),$ where $m_0+m_1=\frac{1}{2}(g_0+g_1)$.
\end{lemma}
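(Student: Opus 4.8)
The plan is to mimic the proof of \Cref{kernel} but applied to the closed surface $E = F_0 \cup A \cup F_1$ inside $W \times I$ together with the bounding $3$-manifold $V$. First I would observe that $E$ is a closed unoriented surface since $A$ is an annulus meeting $F_0$ and $F_1$ along their boundary knots, and that by a Mayer--Vietoris argument (using that $A$ is a homotopy annulus, so contributes nothing) one has $H_1(E;\ZZ) \cong H_1(F_0;\ZZ) \oplus H_1(F_1;\ZZ)$ and similarly with $\ZZ/2$ and $\QQ$ coefficients via the Universal Coefficient Theorem. Write $g_0 = \operatorname{rank} H_1(F_0;\ZZ)$ and $g_1 = \operatorname{rank} H_1(F_1;\ZZ)$; since $E$ bounds the compact $3$-manifold $V$, the rank $g_0 + g_1$ of $H_1(E;\ZZ)$ is even, say $g_0 + g_1 = 2m$ with $m = m_0 + m_1$.

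Next I would split into the orientable and nonorientable cases for $V$, exactly as in \Cref{kernel}. If $V$ is orientable, then by \cite[VI, Theorem 10.4]{Bredon} and/or \cite[Lemma 3.5]{Hatcher-2007} the kernel of $i_* \co H_1(E;\QQ) \to H_1(V;\QQ)$ is a half-dimensional subspace, i.e.\ has dimension $\tfrac12 \dim H_1(E;\QQ) = m$. If $V$ is not orientable, the same "half lives, half dies" principle applies with $\ZZ/2$ coefficients, giving $\dim \Ker\big(H_1(E;\ZZ/2) \to H_1(V;\ZZ/2)\big) = m$; as in the proof of \Cref{kernel}, any integral class whose mod $2$ reduction lies in this kernel maps to a torsion element of $H_1(V;\ZZ)$ and hence into the kernel of $H_1(E;\ZZ)\to H_1(V;\QQ)$. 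In either case I obtain an $m$-dimensional subspace $U$ of $H_1(E;\ZZ) \cong H_1(F_0;\ZZ)\oplus H_1(F_1;\ZZ)$ that dies in $H_1(V;\QQ)$.

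It then remains to choose compatible generating sets. Using the direct sum decomposition $H_1(E;\ZZ) \cong H_1(F_0;\ZZ) \oplus H_1(F_1;\ZZ)$, I would pick generating sets $\{\ga_1,\dots,\ga_{g_0}\}$ of $H_1(F_0;\ZZ)$ and $\{\ga'_1,\dots,\ga'_{g_1}\}$ of $H_1(F_1;\ZZ)$ adapted to $U$: that is, arrange that $\ga_1,\dots,\ga_{m_0}$ and $\ga'_1,\dots,\ga'_{m_1}$ together span (the image in $H_1(E;\ZZ)$ of) a subgroup of finite index in $U$, where $m_0$ is the rank of the projection of $U$ to $H_1(F_0;\ZZ)$ and $m_1 = m - m_0$ accounts for the remainder. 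Since passing to a finite-index subgroup does not affect membership in the kernel of a map to a rational vector space, these classes all lie in $\Ker\big(H_1(F_0;\ZZ)\oplus H_1(F_1;\ZZ) \to H_1(V;\QQ)\big)$ and $m_0 + m_1 = m = \tfrac12(g_0+g_1)$, as required.

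The step I expect to be the main obstacle is the bookkeeping in the last paragraph: arranging the two generating sets so that the "dying" classes are precisely an initial segment of each, compatibly with the splitting of $U$ across the two summands. One has to be slightly careful because $U$ need not be a direct sum of a subgroup of $H_1(F_0;\ZZ)$ and a subgroup of $H_1(F_1;\ZZ)$; choosing a basis of $U$ in echelon form with respect to the decomposition, and extending, handles this, and this is exactly the kind of routine linear-algebra argument that the paper elides by saying the proof is "similar to that of \Cref{kernel}."
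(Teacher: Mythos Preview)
Your proposal is correct and takes precisely the approach the paper intends: the paper omits the proof, saying only that it is similar to that of \Cref{kernel}, and your outline (Mayer--Vietoris identifying $H_1(E;\ZZ)\cong H_1(F_0;\ZZ)\oplus H_1(F_1;\ZZ)$, then half-lives--half-dies over $\QQ$ or over $\ZZ/2$ according to whether $V$ is orientable) is exactly that argument.

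The bookkeeping concern you raise at the end is legitimate and worth flagging. The kernel $U$ need not decompose across the two summands, so your echelon-form construction yields $m$ independent elements of $H_1(F_0;\ZZ)\oplus H_1(F_1;\ZZ)$ lying in the kernel, but not in general elements lying purely in one summand---which is what the lemma's phrasing, read literally, requests (for instance, if $g_0=g_1=1$ and $U$ is spanned by a single mixed class $(e,e')$, no nonzero pure class lies in $U$). This is a slight imprecision in the paper's own statement rather than a defect in your reasoning; the applications in Theorems~\ref{concordance} and~\ref{brown-concordance} only use that there is a rank-$m$ subspace of $H_1(F_0;\ZZ)\oplus H_1(F_1;\ZZ)$ mapping to zero in $H_1(V;\QQ)$, and that is exactly what your argument (and the argument of \Cref{kernel}) establishes.
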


We use \Cref{kernel-2} to prove the following result, which is an analogue of \Cref{slice} for two knots with concordant spanning surfaces.

\begin{theorem}\label{concordance}
Let $K_0 \subset \Si_0\times I$ and $K_1 \subset \Si_1\times I$ be knots with spanning surfaces $F_0 \subset \Si_0 \times I$ and $F_1 \subset \Si_1 \times I$, respectively. If $F_0$ and $F_1$ are concordant and $n(K_0,F_0)=n(K_1,F_1)=0$, then
$$\si(K_0,F_0)=\si(K_1,F_1).$$
\end{theorem}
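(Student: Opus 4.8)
The plan is to mimic the proof of \Cref{slice}, using the concordance 3-manifold $V$ in place of the 3-manifold bounding $E=F\cup D$. The key new ingredient is \Cref{kernel-2}, which produces half-dimensional collections of classes in $H_1(F_0;\ZZ)$ and $H_1(F_1;\ZZ)$ that die rationally in $H_1(V;\QQ)$. First I would set $U_0 = \langle \ga_1,\ldots,\ga_{m_0}\rangle \subset H_1(F_0;\ZZ)$ and $U_1 = \langle \ga'_1,\ldots,\ga'_{m_1}\rangle \subset H_1(F_1;\ZZ)$, with $m_0+m_1 = \tfrac12(g_0+g_1)$, and show that $U_0$ is totally isotropic for $\cG_{F_0}$ and $U_1$ is totally isotropic for $\cG_{F_1}$. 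The argument is the same as in \Cref{slice}: for $a,b \in U_i$, a multiple of $a$ bounds a surface $A$ in $\Si_i\times I$, and pushing $A$ into the tubular neighborhood $N(V)$ of $V$ along the $\{\pm1\}$-bundle over $V$ gives a surface $\wt A$ with boundary $\tau a$, disjoint from a surface $B$ bounding a multiple of $b$; hence $\lk(\tau a, b) = \wt A \cdot B = 0$, and symmetrically, so $\cG_{F_i}(a,b)=0$.

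Next I would apply \eqref{max_total_isot} to each pairing. Since $n(K_i,F_i)=0$, the Gordon-Litherland pairing $\cG_{F_i}$ is non-degenerate of rank $g_i$, so a totally isotropic subspace has dimension at most $\tfrac12(g_i - |\sig\cG_{F_i}|)$. Adding the two inequalities for $i=0,1$:
\begin{equation*}
\tfrac12(g_0+g_1) = m_0+m_1 \leq \dim U_0 + \dim U_1 \leq \tfrac12\big(g_0+g_1 - |\sig\cG_{F_0}| - |\sig\cG_{F_1}|\big),
\end{equation*}
which forces $\sig\cG_{F_0} = \sig\cG_{F_1} = 0$. Here I should be careful that \Cref{kernel-2} really does give $\dim U_0 = m_0$ and $\dim U_1 = m_1$ with $U_0, U_1$ independent collections (the proof of \Cref{kernel}, which \Cref{kernel-2} follows, chooses the $\ga$'s as part of a generating set, and mod-2 reductions generate the kernel of $i_*$ on $H_1(\cdot;\ZZ/2)$, whence torsion-freeness rationally).

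Finally I would handle the Euler-number correction terms. The closed surface $E = F_0 \cup A \cup F_1$ bounds the 3-manifold $V$, so its normal Euler number vanishes; since $A$ is an annulus (orientable), pushing off $E$ contributes no self-intersections over $A$, and the computation as in \Cref{slice} gives $e(F_0,K_0) = e(F_1,K_1) = 0$ — actually what one gets directly is that the two Euler-number contributions cancel, i.e. $e(F_0,K_0) + e(F_1,K_1) = 0$ after orienting $E$. Combining with $\si(K_i,F_i) = \sig\cG_{F_i} + \tfrac12 e(F_i,K_i)$ and $\sig\cG_{F_0} = \sig\cG_{F_1} = 0$ then gives $\si(K_0,F_0) = -\tfrac12 e(F_0,K_0) = \tfrac12 e(F_1,K_1)$; to conclude $\si(K_0,F_0) = \si(K_1,F_1)$ one needs each $e(F_i,K_i) = 0$ separately. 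The main obstacle is exactly this last point: one must argue that the orientation on $E$ induced by $V$ makes $F_0$ and $F_1$ appear with opposite coboundary orientations across $A$, so that the vanishing of the total self-intersection of $E$ splits as $-\lk(K_0,K_0') = e(F_0,K_0)$ and $-\lk(K_1,K_1') = e(F_1,K_1)$ being equal with opposite signs relative to a consistent orientation, and then invoke that $e(F_i,K_i) = e(F_i)$ is a property of $F_i$ alone — so if either is nonzero, one can first normalize by adding half-twisted bands (as in \Cref{thm-surface-bound}) without changing $\si(K_i,F_i)$. I would include the normalization step to reduce to the case $e(F_0,K_0)=e(F_1,K_1)=0$ cleanly, after which the signature identity follows immediately.
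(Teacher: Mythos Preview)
Your argument overshoots: you conclude $\sig\cG_{F_0}=\sig\cG_{F_1}=0$, but the product concordance $V=F\times I$ from $(K,F)$ to itself would then force $\sig\cG_F=0$ for \emph{every} spanning surface with nondegenerate pairing, which is plainly false. That should be a red flag that the splitting step is illegitimate.

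The problem is in reading \Cref{kernel-2} as supplying separate subspaces $U_0\subset H_1(F_0;\ZZ)$ and $U_1\subset H_1(F_1;\ZZ)$ whose elements \emph{individually} die in $H_1(V;\QQ)$. In the product example the kernel of $H_1(F_0)\oplus H_1(F_1)\to H_1(V;\QQ)$ is the anti-diagonal, which meets neither summand nontrivially, so no such splitting exists and one is forced to take $m_0=m_1=0$. What the half-dimensionality argument behind \Cref{kernel} really delivers here is a subspace $U\subset H_1(F_0)\oplus H_1(F_1)$ of rank $\tfrac12(g_0+g_1)$ mapping to zero in $H_1(V;\QQ)$. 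The tubular-neighborhood computation you describe then shows $U$ is totally isotropic for the \emph{single} form
\[
\Theta\bigl((x_0,x_1),(y_0,y_1)\bigr)=-\cG_{F_0}(x_0,y_0)+\cG_{F_1}(x_1,y_1),
\]
the sign reflecting that $F_0$ and $F_1$ appear in $\partial V$ with opposite induced orientations. Applying \eqref{max_total_isot} once to $\Theta$ (which has nullity zero by hypothesis) gives $\sig\Theta=0$, hence $\sig\cG_{F_0}=\sig\cG_{F_1}$ --- exactly what is needed, and this is the paper's route.

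The same orientation convention untangles your Euler-number difficulty: the vanishing self-intersection of $E=F_0\cup A\cup F_1$ computes as $-e(F_0,K_0)+e(F_1,K_1)=0$, so $e(F_0,K_0)=e(F_1,K_1)$ directly. No normalization by half-twisted bands is required, and neither term need vanish on its own.
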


\begin{proof}
Define $\Theta$ on $H_1(F_0;\ZZ)\oplus H_1(F_1;\ZZ)$ as follows
$$\Theta((x_0,x_1),(y_0,y_1))=-\cG_{F_0}(x_0,y_0)+\cG_{F_1}(x_1,y_1).$$
By \Cref{kernel-2}, and similar to the proof of \Cref{slice}, $\ga_1, \ldots, \ga_{m_0},\ga'_1,\ldots,\ga'_{m_1}$
generate a totally isotropic subspace for $\Theta$. Since the nullity of $\Theta$ is zero, it follows that
$$2(m_0+m_1)=g_0+g_1\leq g_0+g_1-|\sig \Theta|,$$
so $0=\sig\Theta=-\sig\cG_{F_0}+\sig\cG_{F_1}$, which implies $\sig\cG_{F_0}=\sig\cG_{F_1}$.

Again, the self-intersection number of $E$ vanishes. The self-intersection number of $E$ equals
$$-\lk(K_0,K_{0}')+\lk(K_1,K_{1}')=-e(F_0,K_0)+e(F_1,K_1)=0.$$
It follows that $e(F_0,K_0)=e(F_1,K_1)$, and $\si(K_0,F_0)=\si(K_1,F_1).$
\end{proof}

\begin{theorem}\label{brown-concordance}
Let $K_0 \subset \Si_0\times I$ and $K_1 \subset \Si_1\times I$ be knots with spanning surfaces $F_0 \subset \Si_0 \times I$ and $F_1 \subset \Si_1 \times I$, respectively. If $F_0$ and $F_1$ are concordant and $\det(\cG_{F_0}),\det(\cG_{F_1}) \not \equiv 0 \text{ (mod $2$)}$, then
$$\Br_{F_0}(K_0)=\Br_{F_1}(K_1).$$
\end{theorem}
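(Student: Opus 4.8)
The plan is to mimic the proof of \Cref{concordance}, replacing the signature argument with a Brown invariant argument, exactly as \Cref{cobordism} is the Brown-invariant analogue of \Cref{slice}. First I would set up the same data: by \Cref{defn-spanning-surf} there is a compact oriented $3$-manifold $W$ with $\partial W = -\Si_0 \cup \Si_1$, a properly embedded annulus $A \subset W \times I$ with $\partial A = K_0 \cup K_1$, and a compact unoriented $3$-manifold $V \subset W \times I$ with $\partial V = F_0 \cup A \cup F_1 =: E$. As in the proof of \Cref{concordance}, form the pairing $\Theta$ on $H_1(F_0;\ZZ) \oplus H_1(F_1;\ZZ)$ by $\Theta((x_0,x_1),(y_0,y_1)) = -\cG_{F_0}(x_0,y_0) + \cG_{F_1}(x_1,y_1)$, and recall from that proof (via \Cref{kernel-2}) that $\Theta$ vanishes on a half-dimensional subspace $U \subset H_1(F_0;\ZZ) \oplus H_1(F_1;\ZZ)$, i.e.\ $\Theta$ is metabolic.

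Next I would pass to the $\ZZ/4$ enhanced space. The pairing $\Theta$ is, up to sign on the first factor, the orthogonal sum of the Gordon--Litherland pairings, and the associated $\ZZ/4$ quadratic enhancement on $V_{F_0} \oplus V_{F_1} = H_1(F_0;\ZZ/2) \oplus H_1(F_1;\ZZ/2)$ is $\varphi_{F_0}^{-} \oplus \varphi_{F_1}$, where $\varphi_{F_0}^{-}$ denotes the enhancement of $-\cG_{F_0}$. By \cite[Theorem 1.20, (iii)]{Brown} (the sign-change formula already used in \Cref{prop:mirror_image}), $\Br(\varphi_{F_0}^{-}) = -\Br(\varphi_{F_0})$. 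The hypothesis $\det(\cG_{F_0}), \det(\cG_{F_1}) \not\equiv 0 \pmod 2$ guarantees both forms, hence their orthogonal sum, are non-singular, so $\Theta$ induces an honest non-singular $\ZZ/4$ quadratic form. Reducing the half-dimensional $\Theta$-isotropic subspace $U$ mod $2$ gives a half-dimensional subspace of $V_{F_0} \oplus V_{F_1}$ on which $\varphi_{F_0}^{-} \oplus \varphi_{F_1}$ vanishes (arguing exactly as in \Cref{cobordism}: the mod-$2$ image of a rank-$g$ summand in a rank-$2g$ free group, on which an integral form vanishes, is $g$-dimensional because the form is non-degenerate mod $2$). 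Thus this form is metabolic, and by Theorem 1.20 (ix) of \cite{Brown} (equivalently \cite[Lemma 4.1]{Matsumoto}) its Brown invariant is $0$. Additivity of the Brown invariant under orthogonal sum then yields $-\Br(\varphi_{F_0}) + \Br(\varphi_{F_1}) = 0$, i.e.\ $\Br(\varphi_{F_0}) = \Br(\varphi_{F_1})$.

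Finally I would handle the Euler-class correction terms. As in the proof of \Cref{concordance}, the self-intersection number of the closed surface $E = F_0 \cup A \cup F_1$ in the closed-up $4$-manifold vanishes (it bounds the $3$-manifold $V$), and computing it by pushing off gives $-\lk(K_0,K_0') + \lk(K_1,K_1') = -e(F_0,K_0) + e(F_1,K_1) = 0$, so $e(F_0,K_0) = e(F_1,K_1)$. Combining with $\Br(\varphi_{F_0}) = \Br(\varphi_{F_1})$ and \Cref{defn:Brown-link},
$$\Br_{F_0}(K_0) = \Br(\varphi_{F_0}) + \tfrac{1}{2}e(F_0,K_0) = \Br(\varphi_{F_1}) + \tfrac{1}{2}e(F_1,K_1) = \Br_{F_1}(K_1).$$
The main obstacle, such as it is, is the bookkeeping in the middle step: verifying that the mod-$2$ reduction of the integral $\Theta$-isotropic subspace is still half-dimensional (using non-singularity) and correctly tracking the sign $\Br(\varphi_{F_0}^{-}) = -\Br(\varphi_{F_0})$ through the orthogonal-sum decomposition — essentially the same point that makes \Cref{concordance} and \Cref{cobordism} work, so no genuinely new difficulty arises.
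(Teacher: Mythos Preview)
Your proposal is correct and follows essentially the same approach as the paper: form the $\ZZ/4$ enhancement $\Phi=\varphi_{F_0}^{-}\oplus\varphi_{F_1}$ of the mod-$2$ reduction of $\Theta$, use the metabolicity coming from \Cref{kernel-2} (exactly as in \Cref{cobordism}) to get $\Br(\Phi)=0$, then invoke additivity and $\Br(-\varphi)=-\Br(\varphi)$, with the Euler-number equality $e(F_0,K_0)=e(F_1,K_1)$ imported from the proof of \Cref{concordance}. One small correction to your parenthetical: the mod-$2$ image of $U$ is half-dimensional because $U$ is a direct summand (it is generated by a subset of a basis, per \Cref{kernel-2}), not because the form is non-degenerate mod $2$.
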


\begin{proof}
Consider the mod 2 reduction of the form $\Theta$ defined in the proof of \Cref{concordance}. Now define $\Phi:H_1(F_0;\ZZ/2)\oplus H_1(F_1;\ZZ/2) \to \ZZ/4$, as $\Phi([a],[b])\equiv -\cG_{F_0}(a,a)+\cG_{F_1}(b,b)\text{ (mod $4$)}$. We can check that $\Phi$ is a quadratic enhancement, and we can define $\Br(\Phi)$.
  
Similar to \Cref{cobordism}, we deduce that $\Br(\Phi)=0$. Notice that the Brown invariant is additive, and that $\Br(-\varphi)\equiv -\Br(\varphi) \text{ (mod $8$)}$. The result now follows. 
\end{proof}

The next result shows if two knots in thickened surfaces are concordant, then under some mild hypotheses, their spanning surfaces are concordant. As a result, Theorems \ref{concordance} and \ref{brown-concordance} hold more generally for some concordant knots.

\begin{theorem}
Let $K_0\subset \Si_0\times I$ and $K_1\subset \Si_1\times I$ be knots with spanning surfaces $F_0 \subset \Si_0 \times I$ and $F_1 \subset \Si_1 \times I$, respectively.
If $K_0$ and $K_1$ are concordant, then there exists an oriented 3-manifold $W$ with $\partial W =\Si_1 \sqcup -\Si_0$ and a properly embedded annulus $A$ in $W \times I$ with $\partial A = K_0 \sqcup K_1$. Let $E=F_0\cup A\cup F_1$. If 
$[E]=0$ in $H_2(W\times I; \ZZ/2)$, then $F_0$ and $F_1$ are concordant.
\end{theorem}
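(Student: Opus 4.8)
The plan is to view the statement as the concordance analogue of \Cref{boundary} and to reduce to that result. Concordance of $K_0$ and $K_1$ supplies the compact oriented cobordism $W$ with $\partial W=\Si_1\sqcup-\Si_0$ together with the properly embedded annulus $A\subset W\times I$ with $\partial A=K_0\sqcup K_1$, and then $E=F_0\cup A\cup F_1$ is a closed surface in $W\times I$ (nonorientable as soon as $F_0$ or $F_1$ is). By \Cref{defn-spanning-surf}, to conclude that $F_0$ and $F_1$ are concordant it suffices to exhibit a compact 3-manifold $V\subset W\times I$ with $\partial V=F_0\cup A\cup F_1=E$, and this is exactly the conclusion of \Cref{boundary} applied to $E\subset W\times I$, provided the hypotheses of that theorem are met.

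The first hypothesis, $[E]=0$ in $H_2(W\times I;\ZZ/2)$, is the standing assumption of the statement. The second is that the normal Euler number $e(E)$ vanishes. Since the annulus $A$ is orientable it contributes nothing, and the computation in the proof of \Cref{concordance} gives $e(E)=e(F_1,K_1)-e(F_0,K_0)$. As in the proofs of \Cref{thm-surface-bound} and \Cref{discussion}, I would arrange $e(E)=0$ by attaching half-twisted bands to $F_0$ and $F_1$, each band changing the corresponding $e(F_i,K_i)$ by $\pm2$; such a band is supported near an arc of $K_i$, so it does not disturb the class $[E]\in H_2(W\times I;\ZZ/2)$, and if the resulting $E$ turns out to be orientable one further half-twisted band makes it nonorientable while preserving both conditions. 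With the hypotheses in hand, \Cref{boundary} produces a compact 3-manifold $V\subset W\times I$ with $\partial V=E$, which is precisely condition (iii) of \Cref{defn-spanning-surf}, so $F_0$ and $F_1$ are concordant.

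I would also record that \Cref{boundary} is stated for a 3-manifold with connected boundary, whereas here $\partial W=\Si_1\sqcup-\Si_0$. This causes no trouble: the proof of \Cref{boundary} uses only that $W\times I$ is orientable, together with obstruction theory carried out on $E$, its tubular neighborhood $N_E$, and the complement $X=W\times I\smallsetminus\text{int}(N_E)$; connectedness of $\partial W$ is never invoked, so the argument applies verbatim.

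The part I expect to be the main obstacle is the bookkeeping around the Euler-number condition: forcing $e(E)=0$ genuinely requires altering $F_0$ and $F_1$, since a priori their self-linking numbers need not agree, and one must verify that this is compatible with keeping $[E]=0$ in $H_2(W\times I;\ZZ/2)$. Equivalently, one may simply include $e(E)=0$ among the hypotheses --- which is in any case forced by the conclusion, since a spanning-surface concordance has $\partial V=E$ and hence $e(E)=0$ --- in which case the proof becomes a direct appeal to \Cref{boundary}. Beyond this, no ideas beyond \Cref{boundary} are required.
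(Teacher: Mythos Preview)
Your proposal is correct and follows essentially the same approach as the paper: arrange $e(E)=0$ by adjusting the spanning surfaces via half-twisted bands, then invoke \Cref{boundary} to produce the 3-manifold $V$ realizing the concordance. The paper's proof is a terse two sentences (``We can arrange that $e(F_0,K_0)=e(F_1,K_1)$, so $e(E)=0$\ldots''), and the issue you flag as the main obstacle---that forcing $e(E)=0$ alters $F_0$ and $F_1$ and one must reconcile this with the statement as phrased---is precisely what the paper glosses over; your observation that $e(E)=0$ is in any case forced by the conclusion is the right way to read the intended statement.
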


\begin{proof}
We can arrange that $e(F_0,K_0)=e(F_1,K_1)$, so $e(E)=0$. Since $[E]=0,$ then \Cref{boundary} implies that there exists a compact 3-manifold $V\subset W\times I$ with $\partial V=E$.
The result now follows.  
\end{proof}

In closing we mention a few open problems and questions for future research. 

It would be interesting to develop a theory of algebraic concordance for checkerboard colorable knots in thickened surfaces. Here, we expect that the notion of concordance of spanning surfaces could be useful, cf.~\cite{Micah-Sujoy}. What kind of torsion does this new algebraic concordance group contain?

It would also be interesting to prove more general results about concordance invariants of the signature and Brown invariants for links in thickened surfaces.
A promising approach would be to develop an interpretation of the Brown invariants in terms of Pin-structures and show they are invariants of Pin bordism, cf. \cite{Kirby-Taylor}.

\subsection*{Acknowledgements}
The first author was partially funded by the Natural Sciences and Engineering Research Council of Canada.
The authors are grateful to Andrew Nicas, Danny Ruberman, and Will Rushworth for their input and feedback, and to Lindsay White for her computational help.
We would also like to thank Colin Bijaoui and Marco Handa for their initial work on this project; and Micah Chrisman for sending comments on an earlier draft.  


\end{document}